\newcommand{\less}{\leqslant}
\newcommand{\gre}{\geqslant}
\newcommand{\what}{\widehat}
\newcommand{\wtilde}{\widetilde}
\newcommand{\argmin}{\mathop{\rm argmin}}
\newcommand{\moment}{\theta}
\newcommand{\defn}{\ensuremath{: \, =}}
\newcommand{\real}{\ensuremath{\mathbb{R}}}
\crefname{hypothesis}{Hypothesis}{Hypotheses}
\title{Faster Least Squares Optimization}
\author{Jonathan Lacotte\thanks{Department of Electrical Engineering, Stanford University, 
		(\email{lacotte@stanford.edu}).}
	\and Mert Pilanci\thanks{Department of Electrical Engineering, Stanford University
		(\email{pilanci@stanford.edu}).}}
\begin{document}

\maketitle

\begin{abstract}
We investigate iterative methods with randomized preconditioners for solving overdetermined least-squares problems, where the preconditioners are based on a data-oblivious random embedding (a.k.a. \emph{sketch}) of the data matrix. We consider two distinct approaches: the sketch is either computed once (fixed preconditioner), or, the random projection is refreshed at each iteration, i.e., sampled independently of previous ones (varying preconditioners). Although fixed sketching-based preconditioners have received considerable attention in the recent literature, little is known about the performance of refreshed sketches. For a fixed sketch, we characterize the optimal iterative method, that is, the preconditioned conjugate gradient (PCG) as well as its rate of convergence in terms of the subspace embedding properties of the random embedding. For refreshed sketches, we provide a closed-form formula for the expected error of the iterative Hessian sketch (IHS), a.k.a.~preconditioned steepest descent. In contrast to the guarantees and analysis for fixed preconditioners based on subspace embedding properties, our formula is \emph{exact} and it involves the expected inverse moments of the random projection. Our main technical contribution is to show that this convergence rate is, surprisingly, unimprovable with heavy-ball momentum. Additionally, we construct the locally optimal first-order method whose convergence rate is bounded by that of the IHS, and we relate this method to existing conjugate gradient methods with varying preconditioners (e.g., the flexible conjugate gradient and the inexact preconditioned conjugate gradient). Based on these theoretical and numerical investigations, we do not observe that the additional randomness of refreshed sketches provides a clear advantage over a fixed preconditioner. Therefore, we prescribe PCG as the method of choice along with an optimized sketch size according to our analysis. Our prescribed sketch size yields state-of-the-art computational complexity (as measured by the flop counts in an idealized RAM model) for solving highly overdetermined linear systems. Lastly, we illustrate the numerical benefits of our algorithms.
\end{abstract}

\begin{keywords}
  Least-squares Optimization, Iterative Methods, Randomized Preconditioners, Random Projections
\end{keywords}

\section{Introduction}

We consider the convex quadratic program
\begin{align}
\label{eqnconvexquadratic}
    x^* \defn \argmin_{x \in \real^d} \left\{f(x) \defn  \frac{1}{2} \langle x, Hx \rangle - \langle b, x\rangle\right\}\,.
\end{align}
where $H = A^\top A$ for a given data matrix $A \in \real^{n \times d}$ with $n \gre d$ and $n \gg 1$, and $b \in \real^d$. In this work, we are interested in the following class of pre-conditioned first-order methods
\begin{align}
\label{eqnfirstordermethods}
    x_{t+1} \in x_0 + \mbox{span}\!\left\{H_{S_0}^\dagger \nabla f(x_0), \dots, H_{S_{t}}^\dagger \nabla f(x_t)\right\}\,,
\end{align}
where $x_0 \in \real^d$ is a given initial point, the matrices $\{S_t\}_{t \gre 0}$ are $m \times n$, and the approximate Hessian $H_{S_t}$ at time $t \gre 0$ is defined as 
\begin{align}
    H_{S_t} \defn A^\top S_t^\top S_t A\,.
\end{align}
We consider \emph{random embeddings} (or \emph{sketching matrices}) $S_t$. Classical random embeddings include Gaussian embeddings with independent entries $\mathcal{N}(0,1/m)$ and Haar matrices as well as the subsampled randomized Hadamard transform (SRHT) and the sparse Johnson Lindenstrauss transform (SJLT) -- see Section~\ref{sectionsketchingmatrices} for background.

Randomized pre-conditioned iterative methods are now standard solvers for modern convex quadratic programs. In the large-scale setting, direct factorization methods have prohibitive computational cost $\mathcal{O}(nd^2)$ whereas the performance of standard iterative solvers such as the conjugate gradient method (CG) have condition number dependency. On the other hand, the current literature on randomized pre-conditioned iterative solvers lacks a unifying perspective, and we aim to take a step towards this goal. We are interested in two settings: first, the sketching matrices are all equal, i.e., $S_t = S$ for a \emph{fixed} sketching matrix $S \in \real^{m \times n}$; second, the sketching matrices $\{S_t\}$ are independent and identically distributed (i.i.d.) and we say that they are \emph{refreshed}.

\subsection{Iterative Hessian sketch} For fixed or refreshed embeddings, the canonical instances of~\eqref{eqnfirstordermethods} are the iterative Hessian sketch (IHS) and the IHS~with heavy-ball momentum (Polyak-IHS), whose updates are respectively given by
\begin{align}
    & x_{t+1} = x_t - \mu \, H_{S_t}^\dagger \nabla f(x_t) & \mbox{(IHS)} \label{eqnihs}\\
    & x_{t+1} = x_t - \mu\, H_{S_t}^\dagger \nabla f(x_t) + \beta \, (x_t - x_{t-1}) &\mbox{(Polyak-IHS)} \label{eqnpolyakihs}
\end{align}
These two methods can respectively be viewed as gradient descent and the Chebyshev semi-iterative method~\cite{manteuffel1977tchebychev} with pre-conditioners $H_{S_t}$.

\subsection{Preconditioned CG with fixed or variable preconditioners} The class~\eqref{eqnfirstordermethods} also contains the preconditioned conjugate gradient method with a fixed embedding $S$ (PCG); see, e.g., Ch.~5 in~\cite{nocedal2006numerical} for more background. Specialized to a fixed preconditioner $H_{S}$, PCG starts with $r_0 = b - H x_0$, solves $H_S \what r_0 = r_0$, sets $p_0 = \what r_0$, and then computes at each iteration 
\begin{align}
    x_{t+1} = x_t + \alpha_t p_t\,,\,\,\, r_{t+1} = r_t - \alpha_t H p_t\,,\,\,\, p_{t+1} = \what r_{t+1} + \frac{r_{t+1}^\top \what r_{t+1}}{r_t^\top \what r_t}\cdot  p_t \quad \mbox{(PCG)}
\end{align}
where $\alpha_t = \frac{r_t^\top \what r_t}{p_t^\top H p_t}$, and $\what r_{t+1}$ is solution of the linear system $H_S \cdot \what r_{t+1} = r_{t+1}$. Efficient implementations of PCG are based on precomputing a factorization $H_S = (H_S^\frac{1}{2})^\top H_S^\frac{1}{2}$ such that the latter linear system can be efficiently solved, e.g., $H_S^\frac{1}{2}$ is a product of orthogonal, diagonal and/or triangular matrices. For instance,~\cite{rokhlin2008fast, avron2010blendenpik} propose to compute a pivoted QR decomposition $SA = QR \Pi$, which corresponds to $H_S^\frac{1}{2} = R \Pi$. It is proposed in~\cite{meng2014lsrn} to compute a thin SVD $SA = \wtilde U \wtilde \Sigma \wtilde V^\top$, which corresponds to $H_S^\frac{1}{2} = \wtilde \Sigma^\frac{1}{2} \wtilde V^\top$. 

A canonical extension of PCG to the case of variable pre-conditioners is the so-called generalized conjugate gradient method (GCC); see for instance~\cite{axelsson1991black, knyazev2008steepest}. Specialized to the case of refreshed embeddings and the preconditioners $H_{S_t}$, it computes at each iteration $t \gre 0$ the direction $v_t = H_{S_t}^\dagger \nabla f(x_t)$ and
\begin{align}
\label{eqnconjugatedirectionsrefreshed}
    p_t = v_t - \sum_{j=0}^{t-1} \frac{\langle v_t, H p_j\rangle}{\langle p_j, H p_j \rangle} p_j\,,\qquad x_{t+1} = x_t + \frac{\langle p_t, b \rangle}{\langle p_t, H p_t \rangle} p_t\,,\quad \mbox{(GCC)}
\end{align}
In contrast to PCG, the update~\eqref{eqnconjugatedirectionsrefreshed} involves a full orthogonalization of the new direction $H_{S_t}^\dagger g_t$ with respect to previous search directions $p_0, \dots, p_{t-1}$. The above update is known (e.g., Lemma~2.2 in~\cite{axelsson1991black} or Lemma~3.3 in~\cite{knyazev2008steepest}) to be locally optimal within the span of the previous search directions, i.e.,
\begin{align}
\label{eqnoptimalityconditionrefreshed}
    x_{t+1} \defn \argmin_{x \in x_0 + \mathcal{K}_t} \|A(x - x^*)\|_2^2\,,
\end{align}
where $\mathcal{K}_t \defn \mbox{span}\{H_{S_0}^\dagger \nabla f(x_0), \dots, H_{S_t}^\dagger \nabla f(x_t)\}$. A natural approximation -- known as the flexible conjugate gradient method (FCG) -- is to truncate the orthogonalization step to a limited number of past conjugate directions. That is, at each iteration $t \in \{0, \dots, d-1\}$, we compute $v_t = H_{S_t}^\dagger \nabla f(x_t)$ and we do
\begin{align}
\label{eqnflexiblecg}
    p_t = v_t - \sum_{j=t-k_t}^{t-1} \frac{\langle v_t, H p_j\rangle}{\langle p_j, H p_j \rangle} p_j\,,\qquad x_{t+1} = x_t + \frac{\langle p_t, b \rangle}{\langle p_t, H p_t \rangle} p_t\,,\quad \mbox{(FCG)}
\end{align}
where $0 \less k_t \less t$ and $k_{t+1} \less k_{t}+1$. FCG has been considered in~\cite{notay2000flexible, golub1999inexact, knyazev2008steepest} in the case of deterministic variable preconditioners which aim to approximate a fixed linear operator and thus analyzed from a worst-case approximation error perspective. To our knowledge, we are the first to consider FCG with the randomized i.i.d.~preconditioners $H_{S_t}$; see Section~\ref{sectionihsrefreshed}.

In this work, we aim to provide a more unified view over the class of sketching-based preconditioned first-order methods~\eqref{eqnfirstordermethods} and we address the following questions. What is the optimal method and its convergence rate with a fixed embedding? With refreshed embeddings? Does the additional randomness of refreshed embeddings provide any advantage over a fixed preconditioner? What sketching-based method shall be prescribed for solving a generic, dense linear system and what sketch size?

\subsection{Statement of Contributions}

We will show that the performance of the class of pre-conditioned methods~\eqref{eqnfirstordermethods} solely depends on the spectral properties of the matrices $C_{S_t} \defn U^\top S_t^\top S_t U$, where $U \in \real^{n \times d}$ is the matrix of left singular vectors of $A$, and this remarkable property holds universally for any data pair $(A,b)$. Importantly, for large enough $m$ (e.g., $m \gtrsim d$ for Gaussian embeddings or $m \gtrsim d \log d$ for the SRHT), the condition number of $C_{S_t}$ is close to $1$. In order to limit the scope of our discussion, we focus most of our analysis around Gaussian and Haar whose spectral properties are finely characterized, as well as the SRHT. Nonetheless, many of our results readily extend to a larger class of embeddings. More specifically, we have the following contributions.

\textbf{Fixed sketch and extreme eigenvalues analysis.} Given a fixed embedding $S \in \real^{m \times n}$, we show that PCG is the optimal method within the class~\eqref{eqnfirstordermethods}. Our proof of this fact is based on standard optimality results of CG that we adapt to our setting, i.e., we relate its convergence rate to the eigenvalues of $C_S$ and we show that the required number of iterations to attain a certain precision is fully predictable in terms of the extreme eigenvalues of $C_S$. We recall similar convergence results (already established in~\cite{ozaslan2019iterative}) for the IHS and the Polyak-IHS, as these methods are known to perform better in certain settings (e.g., on clusters with high-communication costs). 

\textbf{Refreshed sketches and moments-based analysis.} Given i.i.d.~embeddings $S_t \in \real^{m \times n}$, we provide an exact error formula \emph{in expectation} for the IHS and characterize the exact optimal step size. Our analysis is novel and it involves the inverse moments $\mathbb{E}\{C_S^{-j}\}$ for $j \in \{1,2\}$ in contrast to standard results in the literature which are based on extreme eigenvalues analysis and condition numbers. We construct the locally optimal method (GCC) and its flexible version (FCG), and we show that these methods inherit the error bound of the IHS.

\textbf{Heavy-ball momentum and refreshed sketches.} Given i.i.d.~embeddings $S_t \in \real^{m \times n}$, we prove that Polyak-IHS with a constant momentum parameter does not provide acceleration, and this is our main technical contribution. This remarkable fact contrasts with standard results on the performance of first-order methods (e.g., Chebyshev semi-iterative method). Furthermore, we observe numerically that GCC with refreshed embeddings does not outperform PCG with a fixed embedding, despite the additional randomness. Based on these numerical and theoretical comparisons, we prescribe PCG with a fixed preconditioner as a generic method of choice. 

\textbf{Optimized sketch size and faster least-squares optimization.} We characterize the sketch size that minimizes the computational cost (as measured by the flops count in an idealized RAM model) of PCG for both Gaussian embeddings and the SRHT in order to reach an $\varepsilon$-accurate solution. In the case of highly overparameterized problems ($n > d^2$), high precision and the SRHT, we obtain the optimized complexity
\begin{align}
    \mathcal{C} = \mathcal{O}\!\left(nd (\log d + \frac{\log(1/\varepsilon)}{\log(n/d^2)})\right)\,.
\end{align}
This improves on the classical cost $\mathcal{C}_\text{cl} = \mathcal{O}(nd (\log d + \log(1/\varepsilon)))$ with the standard prescription (see, e.g.,~\cite{rokhlin2008fast}) $m=\mathcal{O}(d\log d)$. To our knowledge, our sketch size prescription yields the state-of-the-art complexity for solving dense linear systems, and we illustrate numerically its benefits.

\subsection{Open Questions}

We do not characterize the \emph{globally} optimal method with refreshed embeddings, nor its rate of convergence, and we leave this as an open problem. Furthermore, we establish upper bounds on the respective errors of PCG and GCC, and we show that for our embeddings of interest, these upper bounds scale similarly. We then choose PCG as the method of choice based on numerical comparisons. However, it is left open to determine whether PCG outperforms GCC uniformly over the dimensions $n, d, m$ and the choice of the embedding. More generally, we leverage well-known facts about the spectral properties of the embeddings we consider. However, we do not provide a systematic way of comparing the results of our extreme eigenvalues analysis for a fixed embedding and of our moments-based analysis for refreshed embeddings. It is thus left open to characterize whether the bounds we provide for, e.g., GCC versus PCG can be compared for other interesting classes of embeddings for which we do not know, for instance, the inverse moments $\mathbb{E}\{C_S^{-j}\}$ with $j \in \{1,2\}$ or their trace, e.g., the SJLT.

\subsection{Notations}
We reserve the notations $0 \less \lambda_d \less \hdots \less \lambda_1$ for the eigenvalues of the matrix $C_S$, and $C_S = \sum_{i=1}^d \lambda_i v_i v_i^\top$ for an eigenvalue decomposition. We denote by $\|\cdot\|_H$ the norm induced by $H$, i.e., $\|x\|_H^2 = \langle x, H x\rangle$. We denote a thin SVD $A = U \Sigma V^\top$ and we define the shorthand $H^\frac{1}{2} = U^\top A$. Note that $H = (H^\frac{1}{2})^\top H^\frac{1}{2}$. Given an iterate $x_t$, we introduce the error vector $\Delta_t = H^\frac{1}{2} (x_t - x^*)$ and the prediction error $\delta_t = \frac{1}{2} \|x_t - x^*\|_H^2$. Note that $\delta_t = \frac{1}{2} \|\Delta_t\|_2^2$. We will assume for simplicity that the matrix $A$ is full-column rank and that the random projections $S_t A$ are also full-column rank almost surely. For instance, with Gaussian embeddings, this holds for $m \gre d$.

\section{Randomized Sketches}
\label{sectionsketchingmatrices}

Gaussian embeddings, i.e., matrices $S \in \real^{m \times n}$ with i.i.d.~Gaussian entries $\mathcal{N}(0,1/m)$ are a classical random projection whose spectral and subspace embedding properties are tightly characterized. The cost of forming the sketch $S \cdot A$ for a dense matrix $A$ requires $\mathcal{O}(ndm)$ flops (using classical matrix multiplication). In practice, e.g., parallelized computation or for a sparse matrix $A$, the running time can actually be significantly faster (see, e.g.,~\cite{meng2014lsrn} for a detailed discussion of practical advantages of Gaussian embeddings). Haar embeddings also have strong subspace embedding properties, but are slow to generate. A matrix $S \in \real^{m \times n}$ is a Haar embedding if $S^\top S = I_m$ and if its span is uniformly distributed among the $m$-dimensional subspaces of $\real^n$. Another orthogonal transform is the SRHT~\cite{ailon2006approximate}, and it provides in general a more favorable trade-off in terms of subspace embedding properties and sketching time. A matrix $S \in \real^{m \times n}$ is a SRHT if $S = R H E$ where $R \in \real^{m \times n}$ is a row-subsampling matrix (uniformly at random without replacement), $H \in \real^{n \times n}$ is the Hadamard matrix\footnote{The Hadamard transform is defined for $n = 2^k$ for some $k \gre 0$. If $n$ is not a power of $2$, a standard practice is to pad the data matrix with $2^{\lceil \log_2(n)\rceil}-n$ additional rows of $0$'s.} of size $n$, and $E$ is a diagonal matrix with random signs on the diagonal. Its sketching cost is near-linear and scales as $\mathcal{O}(nd\log m)$. In this work, we will also consider the general class of randomized embeddings that satisfy the following condition.
\begin{condition}
\label{condition:unbiasedmoments}
A random embedding $S \in \real^{m \times n}$ satisfies the first and second inverse unbiased moments condition if there exists $m_0 \gre 1$ such that for any $m \gre m_0$, there exist finite real numbers $\theta_1, \theta_2 > 0$ such that
\begin{align}
\label{eqninversemoments}
	\mathbb{E}\{C_S^{-1}\} = \theta_1 \cdot I_d\,,\quad \mbox{and} \quad \mathbb{E}\{C_S^{-2}\} = \theta_2 \cdot I_d\,,
\end{align}
for any matrix $U \in \real^{n \times d}$ with orthonormal columns. 
\end{condition}
Note that we have the trace formula $\theta_j = d^{-1}\mbox{tr}\,\mathbb{E}\{C_S^{-j}\}$ for $j \in \{1,2\}$. Due to their rotational invariance, Gaussian and Haar embeddings satisfy Condition~\ref{condition:unbiasedmoments}. According to Lemma~2.3 in~\cite{gupta1968some}, it holds for Gaussian embeddings that
\begin{align}
\label{eqngaussianmoments}
    \theta_1 =  \frac{m}{m-d-1} \,,\quad \mbox{and} \quad \theta_2 = \frac{m^2 (m-1)}{(m-d)(m-d-1)(m-d-3)}\,,
\end{align}
provided that $m \gre d+4$. According to~\cite{lacotte2020limiting} (see Lemma~3.2 therein), we have for Haar matrices the finite-sample approximations
\begin{align}
\label{eqnsrhtmoments}
    \theta_1 \approx  \frac{n-d}{m-d} \,,\quad \mbox{and} \quad \theta_2 \approx \frac{n-d}{(m-d)^3}(d^2 + mn - 2dm)\,.
\end{align}
It is not known whether the SRHT does satisfy even approximately Condition~\ref{condition:unbiasedmoments} (see Section 5.3 in~\cite{derezinski2020sparse} for more insights). Nonetheless, it has been shown (see Lemma~4.3 in~\cite{lacotte2020limiting}) that the trace formula $d^{-1} \mbox{tr}\, \mathbb{E}\{C_S^{-j}\}$ for $j \in \{1,2\}$ are asymptotically the same for the SRHT and Haar matrices. That is, it holds asymptotically for the SRHT that
\begin{align}
\label{eqnsrhttrace}
    \frac{1}{d}\mbox{tr}\, \mathbb{E}\{C_S^{-1}\} \approx  \frac{n-d}{m-d} \,,\quad \mbox{and} \quad \frac{1}{d}\mbox{tr}\, \mathbb{E}\{C_S^{-2}\} \approx \frac{n-d}{(m-d)^3}(d^2 + mn - 2dm)\,.
\end{align}
Consequently, although some of our formal guarantees (see Section~\ref{sectionihsrefreshed}) will be stated for Haar embeddings, we will carry out some of our numerical experiments with the SRHT instead. In fact, after the appearance of a preliminary version of this work, the follow-up work~\cite{lacotte2020limiting} showed that the IHS with refreshed SRHT and Haar embeddings have similar numerical performance.

Another common choice in practice is a sparse embedding such as the SJLT~\cite{clarkson2017low} (see, also, the more general class of OSNAPs~\cite{nelson2013osnap}): for each column, a number $s$ of rows are chosen uniformly at random without replacement, and the corresponding entries are randomly chosen in $\{\pm 1/\sqrt{s}\}$. However, little is known regarding the trace formula of the SJLT and whether it satisfies even approximately Condition~\ref{condition:unbiasedmoments} (see again the discussion in Section 5.3 in~\cite{derezinski2020sparse}). Consequently, we choose to not address sparse embeddings in this work. Lastly, the authors of~\cite{derezinski2020sparse} recently introduced the so-called LESS embeddings whose inverse moments $\mathbb{E}\{C_S^{-1}\}$ and $\mathbb{E}\{C_S^{-2}\}$ are nearly unbiased and whose sketching cost scales as $\mathcal{O}(nd \log m)$. As for future work, it may be of great practical interest to extend the present analysis to such embeddings with small inversion biases.

\section{Pre-conditioned First-order Methods with Fixed Embedding}
\label{sectionfixed}

We analyze first-order methods with a fixed sketching-based preconditioner. Our error guarantees are exact and involve the eigenvalues of the matrix $C_S$. Standard upper bounds are derived from its extreme eigenvalues $\lambda_1$ and $\lambda_d$. Provided these extreme eigenvalues can be estimated (e.g., with high-probabililty bounds), the algorithms we develop in this section are then implementable with a fully predictable number of iterations to converge up to a certain precision. In particular, given a fixed embedding $S \in \real^{m \times n}$ and a deviation parameter $\rho \in (0,1)$, we consider the $S$-measurable event 
\begin{align}
    \mathcal{E}_{\rho}^m \defn \left\{ (1-\sqrt{\rho})^2 \less \lambda_d \less \lambda_1 \less (1+\sqrt{\rho})^2 \right\}\,.
\end{align}
We show that PCG is the globally optimal method. We also develop error guarantees for the IHS and Polyak-IHS which are weaker than PCG. These algorithms may be of greater practical interest in settings with high communication costs due to the extra synchronization steps of PCG~\cite{meng2014lsrn, varga1961}.

\subsection{Optimal Method and Error Guarantees}

We establish first a lower bound on the performance of any first-order method and that it is attained by PCG. To prove the latter, we leverage the well-known fact (e.g.,~\cite{d2018optimization}, Ch.~1, or~\cite{axelsson1996iterative}, Section~11.1.2) that PCG is equivalent to standard CG applied to the preconditioned quadratic function $f_S : z \mapsto \frac{1}{2} \|A H_S^{-\frac{1}{2}} z\|_2^2 - b^\top H_S^{-\frac{1}{2}} z$ and starting at $z_0 = H_S^{\frac{1}{2}} x_0$. Precisely, CG applied to $f_S$ returns a sequence of iterates $\{z_t\}$ such that $x_t = H_S^{-\frac{1}{2}} z_t$ for all $t \gre 0$. Hence, we can leverage standard optimality results of CG to establish the next result.
\begin{theorem}[Global Optimality of PCG]
\label{theoremoptimalitypcg}
    Let $S \in \real^{m \times n}$, and denote $\xi_i = \langle v_i, \Delta_0 \rangle$. It holds for any pre-conditioned first-order method with the fixed embedding $S$ and starting at $x_0$ that
    \begin{align}
    \label{eqnlowerboundfixedsketch}
        \delta_t \gre \Big\{\ell_{t}^*(S,x_0) \defn \min_{\substack{Q \in \real_t[X];\\Q(0)=1}} \sum_{i=1}^d Q(\lambda^{-1}_i)^2 \xi_i^2 \Big\}\,.    
    \end{align}
    Furthermore, PCG~starting at $x_0$ attains the lower bound $\ell^*_t(S,x_0)$.
\end{theorem}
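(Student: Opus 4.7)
The plan splits into three steps: (1) show every admissible iterate is a matrix polynomial in $H_S^\dagger H$ applied to the initial error, (2) diagonalize that polynomial through the SVD of $A$ so its action reduces to the spectrum of $C_S$, and (3) recover PCG's optimality by leveraging the standard optimality of CG on the preconditioned quadratic $f_S$.

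\textbf{Step 1: Polynomial structure of the iterates.} Fix $S_t \equiv S$ and use $\nabla f(x) = H(x - x^*)$. I proceed by induction on $t$ to prove that every admissible iterate in the class \eqref{eqnfirstordermethods} satisfies $x_t - x^* = Q_t(H_S^\dagger H)(x_0 - x^*)$ for some $Q_t \in \real_t[X]$ with $Q_t(0) = 1$. For the inductive step, if $x_j - x^* = Q_j(H_S^\dagger H)(x_0 - x^*)$ for all $j \leq t$, then $H_S^\dagger \nabla f(x_j) = (H_S^\dagger H)\, Q_j(H_S^\dagger H)(x_0 - x^*)$, so the span in \eqref{eqnfirstordermethods} is contained in $\{P(H_S^\dagger H)(x_0 - x^*) : P(0) = 0,\ \deg P \leq t+1\}$; adding $x_0$ produces a new $Q_{t+1}$ with $Q_{t+1}(0) = 1$ and $\deg Q_{t+1} \leq t+1$.

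\textbf{Step 2: Spectral reduction via the SVD.} Write $A = U \Sigma V^\top$. Then $H_S = V \Sigma C_S \Sigma V^\top$ and, under the full-rank hypothesis, $H_S^\dagger H = V \Sigma^{-1} C_S^{-1} \Sigma V^\top$. The crucial identity is the sandwich $\Sigma V^\top \, Q(H_S^\dagger H) = Q(C_S^{-1})\, \Sigma V^\top$ for every polynomial $Q$, verified by a one-line polynomial expansion using $(\Sigma V^\top)(V \Sigma^{-1}) = I$. Left-multiplying $x_t - x^*$ by $H^{1/2} = \Sigma V^\top$ yields $\Delta_t = Q_t(C_S^{-1}) \Delta_0$, and the eigendecomposition $C_S = \sum_i \lambda_i v_i v_i^\top$ gives $\|\Delta_t\|_2^2 = \sum_{i=1}^d Q_t(\lambda_i^{-1})^2 \xi_i^2$. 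Minimizing over the polynomial family from Step 1 delivers the lower bound $\delta_t \geq \ell_t^*(S, x_0)$. This is the main obstacle: $H_S^\dagger H$ is not symmetric, so one must route through its similarity with the symmetric $C_S^{-1}$ via the SVD in order to obtain an exact scalar expansion in the $\lambda_i$ rather than merely an inequality.

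\textbf{Step 3: PCG attains the bound.} Here I would invoke the equivalence stated in the excerpt: PCG coincides with standard CG applied to $f_S(z) = \tfrac{1}{2}\|A H_S^{-1/2} z\|_2^2 - b^\top H_S^{-1/2} z$ starting at $z_0 = H_S^{1/2} x_0$, with $x_t = H_S^{-1/2} z_t$. Classical CG theory guarantees that $z_t$ minimizes $\|z - z^*\|_M^2$, with $M = H_S^{-1/2} H H_S^{-1/2}$, over $z_0 + \mathcal{K}_t(M, \nabla f_S(z_0))$. Since $\|z_t - z^*\|_M^2 = \|x_t - x^*\|_H^2$ and the associated polynomial class coincides with the one in Step 2, the CG minimizer realizes $\ell_t^*(S, x_0)$ and PCG therefore attains the lower bound.
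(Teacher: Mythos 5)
Your proposal matches the paper's proof in structure and substance: the same three moves (polynomial representation of the iterates by induction, conjugation by $H^{1/2} = \Sigma V^\top$ to pass from $H_S^{-1}H$ to the symmetric $C_S^{-1}$ and expand in its eigenbasis, and the standard PCG--CG equivalence via $f_S$ together with classical CG optimality). The SVD sandwich identity you spell out is exactly the paper's $H^{1/2}(H_S^{-1}H)^k = (C_S^{-1})^k H^{1/2}$, so this is the same argument.
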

The exact error $\ell^*_t(S,x_0)$ of PCG depends on all eigenvalues of $C_S$; it seems arduous to characterize it more explicitly (e.g., in a non-variational form and in terms of the dimensions $n,d,m$) and it is expensive to compute it numerically. We will use instead the following classical upper bound (e.g., Theorem~1.2.2 in~\cite{daniel1967conjugate}) in terms of the extreme eigenvalues $\lambda_1$ and $\lambda_d$. That is, we have $\ell_t^*(S,x_0) \less 4 \cdot \left(\frac{\sqrt{\lambda_1}-\sqrt{\lambda_d}}{\sqrt{\lambda_1}+\sqrt{\lambda_d}}\right)^{2t} \cdot \delta_0$. Therefore, conditional on $\mathcal{E}^m_{\rho}$, we obtain for PCG that
\begin{align}
\label{eqnupperboundpcgrate}
    \frac{\delta_t}{\delta_0} \less 4 \cdot \rho^t\,.
\end{align}
Combining~\eqref{eqnupperboundpcgrate} and high-probability bounds on the extreme eigenvalues of the matrix $C_S$ (see Table~\ref{tableembeddingproperties}), we have the following error bounds for PCG. For Gaussian embeddings, it holds with probability $1-e^{-\mathcal{O}(d)}$ that 
\begin{align}
\label{eqnerrorpcggaussian}
    \frac{\delta_t}{\delta_0} \less 4 \cdot \left(\frac{d}{m}\right)^t\,,
\end{align}
and for the SRHT, it holds with probability $1-\mathcal{O}(1/d)$ that
\begin{align}
\label{eqnerrorpcgsrht}
    \frac{\delta_t}{\delta_0} \less 4 \cdot \left(\frac{d \log d}{m}\right)^t\,.
\end{align}
\begin{table}[!h]
\caption{Given $\delta \in (0,1/2)$, we recall the critical sketch size $m_{\delta} \defn \inf \{k \gre 1 \mid \mathbb{P}(\mathcal{E}^m_\rho) \gre 1-\delta\,,\forall \rho \in (0,1), \,\forall m \gre k / \rho\}$ for some classical random embeddings: Gaussian and sub-Gaussian embeddings, the SRHT and the SJLT with $s$ non-zero entries per column sampled uniformly at random without replacement; we refer to~\cite{vershynin2018high, cohen2015optimal, nelson2013osnap} for detailed proofs of these results.}
\label{tableembeddingproperties}
    \centering
    \begin{tabular}{|c|c|}
    \cmidrule(r){1-2}
         Embedding & Critical sketch size $m_{\delta}$  \\
        \midrule 
         Gaussian & $d + \mathcal{O}(\log(1/\delta))$\\
         sub-Gaussian & $\mathcal{O}\!\left(d + \log(1/\delta))\right)$ \\
         SRHT & $\mathcal{O}\!\left(\log(d /\delta) \cdot \left(d +  \log(n/\delta)\right) \right)$\\
         SJLT, $s=1$& $\mathcal{O}\big(d^2 / \delta\big)$ \\
    \bottomrule
    \end{tabular}
\end{table}
In comparison to PCG, we have the following weaker error guarantees for the IHS and Polyak-IHS. We emphasize that a similar result has been established in the concurrent work~\cite{ozaslan2019iterative}.
\begin{theorem}
\label{theoremihsfixed} 
For a fixed embedding $S \in \real^{m \times n}$, conditional on $\mathcal{E}^m_{\rho}$, it holds that the IHS with constant step size $\mu = \frac{(1-\rho)^2}{1+\rho}$ satisfies at every iteration 
\begin{align} 
\label{eqnerrorihs}
    \frac{\delta_t}{\delta_0} \less \left( \frac{4 \rho}{(1+\rho)^2} \right)^{t}\,. 
\end{align}
Furthermore, conditional on $\mathcal{E}_\rho^m$, the Polyak-IHS with constant parameters $\mu = (1-\rho)^2$ and $\beta = \rho$ satisfies
\begin{align} 
\label{eqnerrorpolyakihs} 
    \limsup_{t \to \infty} \left(\frac{\delta_t}{\delta_0}\right)^\frac{1}{t} \less \rho\,.
\end{align}
\end{theorem}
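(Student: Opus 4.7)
The plan is to reduce both iterations to linear recursions in the error coordinates $\Delta_t$ expressed in the eigenbasis of $C_S$, and then to bound the corresponding spectral radii. First I would derive the one-step recursion for the IHS. Using the thin SVD $A = U \Sigma V^\top$, we have $H_S = V\Sigma C_S \Sigma V^\top$ and $H^\frac{1}{2} = \Sigma V^\top$, whence a direct calculation yields $H^\frac{1}{2} H_S^{-1} (H^\frac{1}{2})^\top = C_S^{-1}$. Applying $H^\frac{1}{2}$ to the IHS update~\eqref{eqnihs} and using $\nabla f(x_t) = (H^\frac{1}{2})^\top \Delta_t$ gives the closed form $\Delta_{t+1} = (I - \mu C_S^{-1}) \Delta_t$. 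This step operator is symmetric with eigenvalues $1 - \mu/\lambda_i$, so $\|\Delta_t\|_2 \less \max_i |1 - \mu/\lambda_i|^t \cdot \|\Delta_0\|_2$.

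Conditional on $\mathcal{E}_\rho^m$, each $1/\lambda_i$ lies in $[a,b]$ with $a = (1+\sqrt{\rho})^{-2}$ and $b = (1-\sqrt{\rho})^{-2}$. I would then solve the one-dimensional minimax problem $\min_\mu \max_i |1 - \mu/\lambda_i|$ by equating the signed extremal errors $1 - \mu a = -(1 - \mu b)$, which gives $\mu = 2/(a+b) = (1-\rho)^2/(1+\rho)$ and worst-case absolute value $2\sqrt{\rho}/(1+\rho)$. Squaring and using $\delta_t = \frac{1}{2}\|\Delta_t\|_2^2$ yields~\eqref{eqnerrorihs}.

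For Polyak-IHS, the same coordinate change gives the two-step recursion $\Delta_{t+1} = \big((1+\beta) I - \mu C_S^{-1}\big) \Delta_t - \beta\, \Delta_{t-1}$. Stacking $(\Delta_{t+1}, \Delta_t)$ and diagonalizing $C_S$ decouples this into $d$ independent $2 \times 2$ systems with transition matrices
\[
M_i = \begin{pmatrix} 1+\beta - \mu/\lambda_i & -\beta \\ 1 & 0 \end{pmatrix},
\]
whose eigenvalues are roots of $z^2 - (1+\beta-\mu/\lambda_i) z + \beta = 0$ with product $\beta$. With $\mu = (1-\rho)^2$ and $\beta = \rho$, the identity $(1-\rho)^2 = (1-\sqrt{\rho})^2(1+\sqrt{\rho})^2$ gives $\mu/\lambda_i \in [(1-\sqrt{\rho})^2, (1+\sqrt{\rho})^2]$, so $1+\rho - \mu/\lambda_i$ ranges over $[-2\sqrt{\rho}, 2\sqrt{\rho}]$. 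Hence the discriminant $(1+\beta-\mu/\lambda_i)^2 - 4\beta$ is non-positive for every $i$, each $M_i$ has complex-conjugate eigenvalues of modulus $\sqrt{\beta} = \sqrt{\rho}$, and Gelfand's formula gives $\|M_i^t\|^{1/t} \to \sqrt{\rho}$; combining the blocks and squaring yields~\eqref{eqnerrorpolyakihs}.

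The main delicate point will be the endpoint eigenvalues $\lambda_i \in \{(1-\sqrt{\rho})^2, (1+\sqrt{\rho})^2\}$, where the discriminant vanishes and $M_i$ degenerates to a Jordan block. In that regime $\|M_i^t\|$ carries an extra polynomial-in-$t$ factor, so no clean geometric bound of the form $\delta_t/\delta_0 \less C \cdot \rho^t$ is available without further assumptions. This is precisely why the guarantee~\eqref{eqnerrorpolyakihs} is stated asymptotically, since the polynomial prefactor is absorbed by the $t$-th root in the limsup.
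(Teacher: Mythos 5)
Your proposal is correct and follows essentially the same route as the paper: reduce both iterations to the error recursion $\Delta_{t+1} = (I - \mu C_S^{-1})\Delta_t \;[\,+\,\beta(\Delta_t - \Delta_{t-1})\,]$, bound the operator norm of $I - \mu C_S^{-1}$ via the one-dimensional minimax over $[\lambda_d,\lambda_1]$ for the IHS, and pass to the $2d\times 2d$ companion dynamics together with Gelfand's formula for the Polyak-IHS. The only variation is that where the paper cites Polyak's standard heavy-ball spectral-radius bound $\rho(M_{\mu,\beta}) \less \sqrt{\beta}$, you re-derive it explicitly by block-diagonalizing into $d$ independent $2\times 2$ systems and checking that the discriminant is non-positive so each block has eigenvalue modulus $\sqrt{\rho}$; your remark about the degenerate Jordan block at the endpoints is the same observation the paper makes about why the Polyak guarantee is only asymptotic.
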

The convergence guarantee for the Polyak IHS is asymptotic, and this is essentially due to the approximation error of the spectral radius by Gelfand's formula (see, e.g.,~\cite{kozyakin2009}). However, as for PCG, the \emph{expected} error of the IHS can be characterized exactly in terms of all the eigenvalues of the matrix $C_S$. This is a remarkable universality result which does hold independently of the data pair $(A,b)$.
\begin{theorem}
\label{thmexacterrorfixed}
Let $m \gre d$ and consider $S \in \real^{m \times n}$ a Gaussian or Haar embedding. Then, it holds that the IHS has exact expected error
\begin{align}
\label{eqnexacterrorihs}
    \mathbb{E}\{\delta_t\} = \mathrm{\Gamma}_t(\mu) \cdot \mathbb{E}\{\delta_0\}\,,
\end{align}
where $\mathrm{\Gamma}_t(\mu) = \mathbb{E}\{\frac{1}{d} \sum_{i=1}^d \big(1- \mu \lambda_i^{-1} \big)^{2t}\}$. 
\end{theorem}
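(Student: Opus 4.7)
The plan is to combine a direct error recursion with the rotational invariance of $C_S$ for Gaussian and Haar embeddings. First, I would translate the IHS update in $\real^d$ to an update on the transformed error $\Delta_t$. Using the thin SVD $A = U \Sigma V^\top$, one has $H^\frac{1}{2} = \Sigma V^\top$, $H = V\Sigma^2 V^\top$, and
\begin{align*}
    H_S = A^\top S^\top S A = V \Sigma\, C_S\, \Sigma V^\top\,,\qquad H_S^\dagger = V \Sigma^{-1} C_S^{-1} \Sigma^{-1} V^\top.
\end{align*}
Since $\nabla f(x_t) = H(x_t - x^*)$, a direct computation gives $H^{\frac{1}{2}} H_S^\dagger H = C_S^{-1} H^{\frac{1}{2}}$, and hence the clean recursion $\Delta_{t+1} = (I_d - \mu C_S^{-1}) \Delta_t$, so that
\begin{align*}
    \delta_t \;=\; \tfrac{1}{2} \Delta_0^\top (I_d - \mu C_S^{-1})^{2t} \Delta_0\,.
\end{align*}

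Second, I would establish that $C_S$ is rotationally invariant, i.e.\ $Q^\top C_S Q \stackrel{d}{=} C_S$ for every orthogonal $Q \in \real^{d \times d}$. For Gaussian $S$, the matrix $SU \in \real^{m \times d}$ has i.i.d.~$\mathcal{N}(0, 1/m)$ entries because $U^\top U = I_d$, so $SUQ \stackrel{d}{=} SU$ and thus $Q^\top C_S Q = (SUQ)^\top (SUQ) \stackrel{d}{=} C_S$. For Haar $S$, the distribution of $SU$ is invariant under right multiplication by orthogonal matrices because the row span of $S$ is uniformly distributed among $m$-dimensional subspaces, and $SUQ$ and $SU$ share the same distribution of row span together with the orthonormality structure. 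The same conclusion follows.

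Third, since $\Delta_0 = H^{\frac{1}{2}}(x_0 - x^*)$ is deterministic (or independent of $S$), I would take expectation. By rotational invariance, $M \defn \mathbb{E}\{(I_d - \mu C_S^{-1})^{2t}\}$ commutes with every orthogonal matrix, hence $M = c \cdot I_d$. Taking traces and using the eigenvalue decomposition $C_S = \sum_{i=1}^d \lambda_i v_i v_i^\top$, one gets
\begin{align*}
    c \;=\; \tfrac{1}{d} \, \mathrm{tr}\, M \;=\; \mathbb{E}\!\left\{ \tfrac{1}{d} \sum_{i=1}^d (1 - \mu \lambda_i^{-1})^{2t}\right\} \;=\; \mathrm{\Gamma}_t(\mu)\,.
\end{align*}
Therefore $\mathbb{E}\{\delta_t\} = \tfrac{1}{2}\Delta_0^\top M \Delta_0 = \mathrm{\Gamma}_t(\mu) \cdot \tfrac{1}{2}\|\Delta_0\|_2^2 = \mathrm{\Gamma}_t(\mu) \cdot \delta_0$, which equals $\mathrm{\Gamma}_t(\mu) \cdot \mathbb{E}\{\delta_0\}$ since $\delta_0$ is deterministic.

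The argument is largely mechanical once the right change of variables is in place; I do not expect genuine technical difficulty. The only point that requires care is the justification of rotational invariance of $C_S$ for Haar embeddings (the Gaussian case being immediate from i.i.d.~entries), and the implicit assumption that $C_S$ is almost surely invertible, which is ensured by the standing hypothesis $m \gre d$ together with $SA$ being full-column rank.
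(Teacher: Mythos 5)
Your proposal is correct and takes essentially the same route as the paper: both derive the recursion $\Delta_{t+1} = (I_d - \mu C_S^{-1})\Delta_t$ and then invoke rotational invariance of $C_S$ for Gaussian and Haar embeddings. The only cosmetic difference is in how rotational invariance is exploited: the paper factors through the independence of eigenvectors and eigenvalues together with $\mathbb{E}\{v_i v_i^\top\} = \tfrac{1}{d}I_d$, whereas you directly argue that $\mathbb{E}\{(I_d - \mu C_S^{-1})^{2t}\}$ commutes with all orthogonal matrices and is therefore a scalar multiple of the identity, then read off the scalar from the trace.
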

In order to illustrate the benefits of this universality result, we show next that the error formula~\eqref{eqnexacterrorihs} can be more explicitly characterized in the asymptotic regime where we let the relevant dimensions $n, d, m \to +\infty$. For conciseness, we specialize our next discussion to the case of Gaussian embeddings.

\subsection{Asymptotically Exact Error Formula for the IHS with a Fixed Gaussian Embedding}

Our asymptotic results involve the Marcenko-Pastur distribution~\cite{marvcenko1967distribution} with parameter $(\rho, \sigma) \in (0,1) \times (0,+\infty)$ that we denote by $\text{MP}(\rho, \sigma)$. We recall that its density with respect to the Lebesgue measure on $\real$ is given by 
\begin{align}
    \nu(\lambda) = (2 \pi \sigma^2 \rho \lambda)^{-1} \sqrt{(\lambda_+ - \lambda)(\lambda - \lambda_-)} \mathbf{1}_{[\lambda_-,\lambda_+]}(\lambda)\,,
\end{align}
where $\lambda_- = \sigma^2 (1-\sqrt{\rho})^2$ and $\lambda_+ = \sigma^2 (1+\sqrt{\rho})^2$. We consider the asymptotic regime where $n, m, d \to \infty$, such that $d/m \to \rho \in (0,1)$. According to a classical result~\cite{marvcenko1967distribution}, the empirical distribution of the eigenvalues $\lambda_1, \hdots, \lambda_d$ of $C_S$ converges weakly to the distribution $\text{MP}(\rho,1)$. Therefore, the function $\Gamma_t(\mu)$ converges pointwise to $\mu \mapsto \Gamma^\rho_t(\mu) \defn \mathbb{E}_{\lambda \sim \text{MP}(\rho,1)}\big\{\big(1-\mu \lambda^{-1}\big)^{2t}\big\}$. The function $\Gamma^\rho_t$ is strongly convex so that it admits a unique minimizer $\mu^*_t$. For $t \to \infty$, we provide next a closed-form expression for the optimal step size and the resulting rate of convergence. That is, we are interested in finding a step size $\mu^*$, if any, which satisfies $\liminf_{t \to +\infty} \frac{\Gamma^\rho_t(\mu)}{\Gamma^\rho_t(\mu^*)} \gre 1$ for all $\mu \in \real$ and we aim to characterize the asymptotic rate of convergence $r_{\infty} = \lim_{t \to + \infty} \frac{\Gamma^\rho_{t+1}(\mu^*)}{\Gamma^\rho_t(\mu^*)}$.
\begin{theorem}
\label{theoremmp}
Let $\mu^* = \frac{\left(1-\rho\right)^2}{1+\rho}$. Then, for any $\mu \neq \mu^*$, it holds that
\begin{align*}
\lim_{t \to +\infty} \frac{\Gamma^\rho_t(\mu)}{\Gamma^\rho_t(\mu^*)} = +\infty\,,\qquad \text{and} \qquad r_\infty = \frac{4 \rho}{(1+\rho)^2}\,.
\end{align*}
\end{theorem}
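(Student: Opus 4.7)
The plan is to study the integral representation
\begin{align*}
\Gamma^\rho_t(\mu) = \int_{\lambda_-}^{\lambda_+}\bigl(1-\mu\lambda^{-1}\bigr)^{2t}\nu(\lambda)\,d\lambda
\end{align*}
by Laplace's method, reducing both claims to the behavior of the integrand near the endpoints $\lambda_\pm$ of the Marcenko--Pastur support. I set $h(\lambda) \defn |1 - \mu/\lambda|$ so that the integrand equals $h(\lambda)^{2t}\nu(\lambda)$. The map $\lambda \mapsto 1 - \mu/\lambda$ is monotone on $[\lambda_-, \lambda_+]$, so $h$ is either monotone or V-shaped with a zero at $\lambda=\mu\in(\lambda_-,\lambda_+)$; in either case $M(\mu) \defn \max_{\lambda \in [\lambda_-,\lambda_+]} h(\lambda) = \max(|1-\mu/\lambda_-|,|1-\mu/\lambda_+|)$ is attained at an endpoint. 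A balancing/minimax argument (setting $\mu/\lambda_- - 1 = 1 - \mu/\lambda_+$) identifies the unique minimizer $\mu^\circ = 2\lambda_-\lambda_+/(\lambda_-+\lambda_+)$; using $\lambda_-\lambda_+ = (1-\rho)^2$ and $\lambda_-+\lambda_+ = 2(1+\rho)$ one recovers $\mu^\circ = (1-\rho)^2/(1+\rho) = \mu^*$ and the common value $c \defn M(\mu^*) = (\lambda_+-\lambda_-)/(\lambda_++\lambda_-) = 2\sqrt{\rho}/(1+\rho)$, hence $c^2 = 4\rho/(1+\rho)^2$. For any $\mu \neq \mu^*$, $M(\mu) > c$ strictly, with the cases $\mu \less 0$ and $\mu \notin [\lambda_-, \lambda_+]$ handled directly since then $M(\mu)\gre 1>c$.

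Next I apply Laplace's method locally near the endpoints. The density factorizes as $\nu(\lambda) \sim \kappa_\pm \sqrt{|\lambda-\lambda_\pm|}$ for explicit $\kappa_\pm > 0$, while $h(\lambda_\pm \mp u) = c - a_\pm u + O(u^2)$ with $a_\pm \defn \mu^*/\lambda_\pm^2 > 0$ for small $u > 0$. The change of variable $v = 2t(a_\pm/c)u$, together with the estimate $h(\lambda_\pm \mp u)^{2t} \sim c^{2t}\exp(-2t(a_\pm/c)u)$, reduces each endpoint contribution to a gamma integral $\int_0^\infty \sqrt{v}\,e^{-v}\,dv = \Gamma(3/2)$. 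Summing the two endpoint contributions (equal in magnitude at $\mu=\mu^*$) yields
\begin{align*}
\Gamma^\rho_t(\mu^*) = \frac{c^{2t}}{t^{3/2}}\bigl(K_+ + K_- + o(1)\bigr)
\end{align*}
for explicit positive constants $K_\pm$, so the ratio $\Gamma^\rho_{t+1}(\mu^*)/\Gamma^\rho_t(\mu^*) \to c^2 = 4\rho/(1+\rho)^2$, which is the second claim. For $\mu \neq \mu^*$ the same localization applies at the (now unique) endpoint realizing $M(\mu)$, giving $\Gamma^\rho_t(\mu) = \Theta(M(\mu)^{2t}/t^{3/2})$; since $M(\mu) > c$, the ratio $\Gamma^\rho_t(\mu)/\Gamma^\rho_t(\mu^*)$ grows like $(M(\mu)/c)^{2t} \to +\infty$, which is the first claim.

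The main technical obstacle is the Laplace expansion at the boundary of the MP support, where the density vanishes like $\sqrt{u}$: matching this prefactor against the exponential decay of $h^{2t}$ is what produces the $t^{-3/2}$ normalization (rather than the usual $t^{-1/2}$) and dictates the constants $K_\pm$. A secondary point is a uniform tail bound $h(\lambda) \less c - \varepsilon$ on the complement of a small neighborhood of $\{\lambda_-, \lambda_+\}$, which shows that the bulk contribution to the integral is exponentially smaller than the endpoint contribution and thus negligible when forming the limit.
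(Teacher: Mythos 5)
Your proposal uses essentially the same idea as the paper's proof: a Laplace-type expansion of the Marcenko--Pastur integral localized at the two endpoints of the support, where the square-root vanishing of the density produces the $t^{-3/2}$ prefactor and the $c^{2t}$ exponential rate. The paper carries this out after a change of variables that maps the support to $[-1,1]$ (so the statement you want is its Lemma~\ref{lemmaasympbarphi}), whereas you work directly in the original variable $\lambda$; this is a presentational rather than substantive difference. One place where your route genuinely diverges is the treatment of $\mu$ outside $(\lambda_-,\lambda_+)$: the paper handles this by a separate convexity argument for $\Gamma^\rho_t$, while you argue directly via the maximum $M(\mu)$ of the integrand on the support. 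That is fine in principle, but the specific justification you give, ``$\mu \notin [\lambda_-,\lambda_+]$ implies $M(\mu) \gre 1 > c$,'' is not correct for $0 < \mu < \lambda_-$ (there $M(\mu) = 1 - \mu/\lambda_+ < 1$), nor for $\mu$ slightly above $\lambda_+$ when $\lambda_+ < 2\lambda_-$. What actually saves you is that $M(\mu) > c$ still holds for every $\mu \neq \mu^*$ by the uniqueness of the minimax point $\mu^*$; you should appeal to that rather than to $M(\mu)\gre 1$. Likewise, the parenthetical ``equal in magnitude'' about the two endpoint contributions is a slight overstatement (they have the same $c^{2t}t^{-3/2}$ scaling but different constants $K_\pm$), though this does not affect the limit $\Gamma^\rho_{t+1}(\mu^*)/\Gamma^\rho_t(\mu^*) \to c^2$.
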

Remarkably, the asymptotic rate $r_\infty$ scales as the error upper bound~\eqref{eqnerrorihs}. Hence, for a fixed Gaussian embedding, an extreme eigenvalues analysis is asymptotically exact.

\subsection{Proof of Results in Section~\ref{sectionfixed}}

\subsubsection*{Proof of Theorem~\ref{theoremoptimalitypcg}}

Fix $S \in \real^{m \times n}$ and $x_0 \in \real^d$, and consider a pre-conditioned first-order method based on $S$ and starting at $x_0$, i.e., 
\begin{align}
    x_t \in x_0 + H_S^{-1} \text{span}\!\left\{\nabla f(x_0), \dots, \nabla f(x_{t-1})\right\}    
\end{align}
at every iteration $t \gre 1$. We show by induction that, for any $t \gre 1$, there exists $Q_t \in \real_{t}[X]$ such that $Q_t(0)=1$ and $x_t-x^* = Q_t(H_S^{-1}H) (x_0 - x^*)$. We have $x_1-x^* = x_0-x^* + \alpha H_S^{-1}\nabla f(x_0)$ for some $\alpha \in \real$, i.e., $x_1 = x_0 + \alpha H_S^{-1} H (x_0 - x^*)$. Setting $Q_0(X) = 1+\alpha X$ yields the induction claim for $t=1$. Suppose that the induction hypothesis holds for some $t \gre 1$. We have $x_{t+1}-x^* = x_0-x^* + H_S^{-1} \sum_{j=0}^t \alpha_j \nabla f(x_j)$ for some $\alpha_0, \dots, \alpha_t \in \real$. Note that $\nabla f(x_j) = H (x_j - x^*)$, so that $x_{t+1}-x^* = x_0-x^* + H_S^{-1}H \sum_{j=0}^t \alpha_j (x_j - x^*)$. By induction hypothesis, for each $0 \less j \less t$, there exists $Q_j \in \real_{j}[X]$ such that $Q_j(0)=1$ and $x_j - x^* = Q_j(H_S^{-1}H) (x_0 - x^*)$. Hence, $x_{t+1}-x^* = Q_{t+1}(x_0 - x^*)$ where $Q_{t+1}(X) = 1 + \sum_{j=0}^t \alpha_j X Q_j(X)$ which is a polynomial of degree less than $t+1$. This concludes the proof of the induction claim.

We now prove the lower bound~\eqref{eqnlowerboundfixedsketch}. Let $Q_t \in \real_{t}[X]$ such that $Q_t(0)=1$ and $x_t - x^* = Q_t(H_S^{-1}H)(x_0-x^*)$. Multiplying both sides by $H^\frac{1}{2}$, we get $\Delta_t = H^\frac{1}{2} Q_t(H_S^{-1}H)(x_0-x^*)$. Note that $H^\frac{1}{2} (H_S^{-1}H)^k = (C_S^{-1})^k H^\frac{1}{2}$ for any $k \gre 0$, whence $\Delta_t = Q_t(C_S^{-1}) \Delta_0$. Consequently, $\delta_t = \frac{1}{2} (\Delta_0)^\top Q_t(C_S^{-1})^2 (\Delta_0)$. Using the eigenvalue decomposition $C_S^{-1} = \sum_{i=1}^d \lambda_i v_i v_i^\top$, we find that $\delta_t = \frac{1}{2}\sum_{i=1}^d Q_t(\lambda_i)^2 (v_i^\top \Delta_0)^2$, i.e., $\delta_t = \frac{1}{2} \sum_{i=1}^d Q_t(\lambda_i)^2 \xi_i^2$, and this yields the claim.

We now prove the optimality of PCG. It is well-known (e.g.,~\cite{d2018optimization}, Ch.~1, or~\cite{axelsson1996iterative}, Section~11.1.2) that the iterates $\{x_t\}$ of PCG starting at $x_0$ and the iterates $\{z_t\}$ of CG starting at $z_0 = H_S^{\frac{1}{2}} x_0$ and applied to $f_S : z \mapsto \frac{1}{2} \|A H_S^{-\frac{1}{2}} z\|_2^2 - b^\top H_S^{-\frac{1}{2}} z$ are related by the equation $x_t = H_S^{-\frac{1}{2}} z_t$ for all $t \gre 0$. Thus, we can leverage classical results of CG. (\textit{PCG is an instance of~\eqref{eqnfirstordermethods}}.) According to Theorem~5.3 in~\cite{nocedal2006numerical}, we have $z_{t+1} \in z_0 + \text{span}\!\left\{\nabla f_S(z_0), \hdots, \nabla f_S(z_{t})\right\}$ for all $t \gre 0$. Multiplying the latter inclusion by $H_S^{-\frac{1}{2}}$ and observing that $\nabla f_S(z_j) = H_S^{-\frac{1}{2}} \nabla f(x_j)$ for all $j \gre 0$, we obtain $x_{t+1} \in x_0 + H_S^{-1} \text{span}\!\left\{\nabla f(x_0), \hdots, \nabla f(x_{t})\right\}$. (\textit{PCG is optimal}.) According to Theorem~5.2 in~\cite{nocedal2006numerical}  (see, also,~\cite{hestenes1952methods, daniel1967conjugate}), we have $\|H^\frac{1}{2}H_S^{-\frac{1}{2}} (z_t - z^*)\|_2^2 = \min_{\substack{Q_t \in \real_t[X];\\Q_t(0)=1}} \|Q_t(H^\frac{1}{2}H_S^{-1}H^\frac{1}{2}) H^\frac{1}{2} H_S^{-\frac{1}{2}} (z_0-z^*)\|_2^2$. Using that $\Delta_t = H^\frac{1}{2} H_S^{-\frac{1}{2}} (z_t-z^*)$, the above equation becomes $\|\Delta_t\|_2^2 = \min_{\substack{Q_t \in \real_t[X];\\Q_t(0)=1}} \|Q_t(C_S^{-1}) \Delta_0\|_2^2$, which is equivalent to the claimed optimality result.

\subsubsection*{Proof of Theorem~\ref{theoremihsfixed}} 
The Polyak-IHS update with constant parameters $\mu$ and $\beta$ for $t \gre 1$ is equivalently written as $x_{t+1}-x^* = x_t-x^* - \mu H_S^{-1} H(x_t - x^*) + \beta (x_t - x_{t-1})$ for $t \gre 1$. Multiplying both sides of this equation by $H^\frac{1}{2}$, we obtain the recursion $\Delta_{t+1} = \left(I - \mu C_S^{-1} \right) \Delta_t + \beta (\Delta_t - \Delta_{t-1})$. This can be written as the linear dynamics
\begin{align} 
\label{eqndynamics}
    \begin{bmatrix} \Delta_{t+1}\\ \Delta_t \end{bmatrix} =  M_{\mu,\beta} \begin{bmatrix} \Delta_t \\ \Delta_{t-1} \end{bmatrix}\,,\quad \mbox{where} \quad M_{\mu, \beta} \defn \begin{bmatrix} (1+\beta)I_d - \mu C_S^{-1} & -\beta I_d \\ I_d & 0 \end{bmatrix}\,.
\end{align}
For the specific case $\beta = 0$, we obtain the IHS update and the associated recursion formula $\Delta_{t+1} = (I_d - \mu C_S^{-1} ) \Delta_t$. Using that $\delta_t = \frac{1}{2} \|\Delta_t\|^2$, we obtain for any $t \gre 0$ that $\delta_{t+1} \less  \|I_d - \mu C_S^{-1}\|_2^2 \cdot \delta_t$. The eigenvalues of $I_d -\mu C_S^{-1}$ are $1-\frac{\mu}{\lambda_i}$, whence $\|I_d - \mu C_S^{-1}\|_2 = \max\{|1-\frac{\mu}{\lambda_1}|, |1-\frac{\mu}{\lambda_d}|\}$. Conditional on $\mathcal{E}_\rho$, we have $(1-\sqrt{\rho})^2 \less \lambda_d \less \lambda_1 \less (1+\sqrt{\rho})^2$, whence $\|I_d - \mu C_S^{-1}\|_2 \less \max\!\left\{|1-\frac{\mu}{(1+\sqrt{\rho})^2}|, |1-\frac{\mu}{(1-\sqrt{\rho})^2}|\right\}$. Picking $\mu = \frac{(1-\rho)^2}{1+\rho}$ yields that $\|I_d - \mu C_S^{-1}\|^2_2 \less \frac{4 \rho}{(1+\rho)^2}$, and thus the claimed error bound~\eqref{eqnerrorihs}. For the Polyak IHS update ($\beta > 0)$, we obtain by induction from~\eqref{eqndynamics} that $\delta_{t+1} + \delta_t \less \|M_{\mu,\beta}^t\|^2_2 (\delta_1 + \delta_0)$ for any $t \gre 0$. The so-called Gelfand's formula states that $\lim  \|M_{\mu,\beta}^t\|^{\frac{2}{t}}_2 = \rho(M_{\mu,\beta})^2$ and consequently, $\limsup  \left(\frac{\delta_{t+1}+\delta_t}{\delta_1 + \delta_0}\right)^\frac{1}{t} \less \rho(M_{\mu,\beta})^2$, i.e., $\limsup \left(\frac{\delta_t}{\delta_0}\right)^\frac{1}{t} \less \rho(M_{\mu,\beta})^2$. It remains to bound the spectral radius $\rho(M_{\mu,\beta})^2$. From standard analysis of the heavy-ball method~\cite{polyak1964some}, we have for $\beta \gre \max \left\{ |1-\sqrt{\frac{\mu}{\lambda_1}}|, |1-\sqrt{\frac{\mu}{\lambda_d}}| \right\}^2$ that $\rho(M_{\mu,\beta}) \less \sqrt{\beta}$. Choosing $\mu = (1-\rho)^2$ and $\beta = \rho$ yields the claimed result, i.e., $\rho(M_{\mu,\beta})^2 \less \rho$.

\subsubsection*{Proof of Theorem~\ref{thmexacterrorfixed}}
Fix $t \gre 0$. Using the error recursion formula $\Delta_{t+1} = (I_d - \mu C_S^{-1}) \Delta_t$, we obtain by induction that $\Delta_t = (I_d - \mu C_S^{-1})^t \Delta_0$. Using the eigenvalue decomposition $C_S = \sum_{i=1}^d \lambda_i v_i v_i^\top$, we get $(I_d - \mu C_S^{-1})^t = \sum_{i=1}^d (1-\mu \lambda_i^{-1})^t v_i v_i^\top$, whence $\delta_t = \frac{1}{2} \sum_{i=1}^d (1-\mu \lambda_i^{-1})^{2t} (v_i^\top \Delta_0)^2$. For Gaussian and Haar embeddings, it holds that each eigenvector $v_i$ is independent of $\lambda_i$ and that $\mathbb{E}\{v_i v_i^\top\} = \frac{1}{d} I_d$. Therefore, we obtain that $\mathbb{E}\{\delta_t\} = \mathbb{E}\{\frac{1}{d} \sum_{i=1}^d (1-\mu \lambda_i^{-1})^{2t}\} \mathbb{E}\{\delta_0\}$, which is the claimed result.

\subsubsection*{Proof of Theorem~\ref{theoremmp}}

We introduce 
\begin{align}
    \bar{\varphi}_t(\gamma) = \frac{1}{\left(\gamma + \kappa\right)^{2t}} \int_{-1}^1 (z-\gamma)^{2t} \sqrt{1-z^2} \,h(z) \mathrm{d}z\,,
\end{align}
where $a \defn \lambda_+^{-1}$, $b \defn \lambda_-^{-1}$, $\kappa \defn \frac{b+a}{b-a}$ and $h(z) \defn \frac{1}{\left(z + \kappa \right)^2}$. With the change of variable $\mu = (\frac{b-a}{2}\gamma + \frac{a+b}{2})^{-1}$, we get that $\bar{\varphi}_t(\gamma) = \Gamma_t^\rho(\mu)$. Note that $r_\infty = \left(\frac{b-a}{b+a}\right)^2$, $\mu^* = \frac{2}{a+b}$, and $\gamma \in (-1,1)$ if and only if $\mu \in (b^{-1}, a^{-1})$. We leverage the next result, whose proof is essentially based on Laplace approximations of integrals~\cite{de1981asymptotic}.
\begin{lemma}
	\label{lemmaasympbarphi}
	For $\gamma \in (-1,1)$ such that $\gamma \neq 0$, it holds that $\lim_{t \to +\infty} \frac{\bar{\varphi}_t(\gamma)}{\bar{\varphi}_t(0)} = +\infty$ and $\lim_{t \to +\infty} \frac{\bar{\varphi}_{t+1}(0)}{\bar{\varphi}_{t}(0)} = \left(\frac{b-a}{b+a}\right)^2$.
\end{lemma}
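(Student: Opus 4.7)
The plan is to apply Laplace's method (equivalently, Watson's lemma) to the integral
\begin{align*}
I_t(\gamma) \defn \int_{-1}^1 (z-\gamma)^{2t}\, \sqrt{1-z^2}\, h(z)\, dz
\end{align*}
and extract its leading-order asymptotics as $t \to +\infty$. The key observation is that for $\gamma \in (-1,1)$, the function $z \mapsto (z-\gamma)^2$ attains its maximum on $[-1,1]$ at the endpoint farthest from $\gamma$: when $\gamma = 0$ both endpoints $z = \pm 1$ contribute equally, whereas when $\gamma \neq 0$ only one endpoint (namely $z = -1$ if $\gamma > 0$, or $z = 1$ if $\gamma < 0$) dominates. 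Since $\sqrt{1-z^2}$ vanishes like a square root at these endpoints while $h$ is smooth there (as $\kappa \pm 1 > 0$ by $\kappa > 1$), each endpoint contribution decays in $t$ with a polynomial factor $t^{-3/2}$.

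For the second limit, I set $\gamma = 0$ and localize near $z = \pm 1$ via the change of variables $z = \pm(1-u)$, $u \gre 0$. Writing $(1-u)^{2t} \sim e^{-2tu}$, $\sqrt{1-z^2} \sim \sqrt{2u}$, and using $\int_0^{\infty} e^{-2tu}\, u^{1/2}\, du = \Gamma(3/2)(2t)^{-3/2}$, I obtain
\begin{align*}
\bar{\varphi}_t(0) \sim \bigl(h(1)+h(-1)\bigr)\, \sqrt{2}\, \Gamma(3/2)\, (2t)^{-3/2}\, \kappa^{-2t}\,,
\end{align*}
so that $\bar{\varphi}_{t+1}(0)/\bar{\varphi}_t(0) \to \kappa^{-2} = \bigl((b-a)/(b+a)\bigr)^2$. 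For the first limit, by symmetry I may assume $\gamma > 0$; then the dominant endpoint is $z=-1$ with $|z-\gamma| = 1+\gamma$. Localizing via $z = -1 + u$, writing $(z-\gamma)^{2t} = (1+\gamma)^{2t}(1-u/(1+\gamma))^{2t} \sim (1+\gamma)^{2t} e^{-2tu/(1+\gamma)}$, and applying the same Laplace-type calculation yields
\begin{align*}
\bar{\varphi}_t(\gamma) \sim C(\gamma)\, t^{-3/2}\, \left(\frac{1+\gamma}{\gamma + \kappa}\right)^{2t}
\end{align*}
for a positive constant $C(\gamma)$ depending on $h(-1)$ and $\gamma$.

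Taking the ratio, the $t^{-3/2}$ prefactors cancel and I am left with $\bar{\varphi}_t(\gamma)/\bar{\varphi}_t(0) \sim C'(\gamma) \cdot \bigl((1+\gamma)\kappa/(\gamma + \kappa)\bigr)^{2t}$. The strict inequality $(1+\gamma)\kappa > \gamma + \kappa$ simplifies to $\gamma(\kappa - 1) > 0$, which holds since $\gamma > 0$ and $\kappa = (b+a)/(b-a) > 1$; the case $\gamma < 0$ is analogous, reducing to $-\gamma(\kappa+1) > 0$. Hence the ratio diverges to $+\infty$, as claimed. The main technical step to formalize is verifying that the contribution to $I_t(\gamma)$ from any closed subset of $[-1,1]$ excluding a small neighborhood of the dominant endpoint is exponentially smaller than the leading term; this follows from a uniform bound of the form $(z-\gamma)^{2t} \less ((1+|\gamma|) - \varepsilon)^{2t}$ on such subsets, and justifies all the localizations above. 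The cited reference~\cite{de1981asymptotic} supplies the standard Laplace-method machinery needed to make the asymptotic equivalences rigorous.
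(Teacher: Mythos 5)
Your proof is correct, and it takes a genuinely different route from the paper's. The paper folds the square-root factor into the phase, writing the integrand as $e^{t g_t(z)}h(z)$ with $g_t(z)=2\log(z-\gamma)+\tfrac{1}{2t}\log(1-z^2)$, finds the interior maximizer $z^*=z_+$ (which drifts to the endpoint $z=1$ at rate $1/t$), and then applies the interior-point Laplace formula at this $t$-dependent critical point. You instead keep the phase fixed, $\phi(z)=2\log(z-\gamma)$, and treat this as a clean endpoint/Watson problem: the maximum of $\phi$ is at the endpoint, the amplitude $\sqrt{1-z^2}\,h(z)$ vanishes there like $\sqrt{u}$, and the substitution $z=\mp(1-u)$ with $\int_0^\infty e^{-\lambda u}u^{1/2}\,du=\Gamma(3/2)\lambda^{-3/2}$ delivers the $t^{-3/2}$ prefactor directly. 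Your approach has the advantage of staying squarely within textbook Watson's lemma; the phase and the location of the maximum are independent of $t$, so the tail estimate and localization you mention at the end are completely standard. The paper's version requires justifying the Laplace formula for a $t$-dependent exponent with a maximizer that escapes to the boundary and a Hessian that grows like $t$ — a regime outside the usual theorem. In fact, checking the explicit case $\gamma=0$, $h\equiv 1$ gives $\psi_t(0)=\Gamma(t+1/2)\Gamma(3/2)/\Gamma(t+2)\sim (\sqrt{\pi}/2)\,t^{-3/2}$, which agrees with your constant but not with the paper's $\sqrt{\pi/(2e)}$ (the paper's naive Laplace application produces a spurious $e^{-1/2}$). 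This discrepancy is harmless for the lemma itself, since both claimed limits only involve ratios in which the multiplicative constants cancel — only the exponential rates $(1+|\gamma|)^{2t}/(\gamma+\kappa)^{2t}$ and $\kappa^{-2t}$ and the common $t^{-3/2}$ power matter, and you identify those correctly, including the reduction of the divergence condition to $\gamma(\kappa-1)>0$ for $\gamma>0$ and $-\gamma(\kappa+1)>0$ for $\gamma<0$.
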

From Lemma~\ref{lemmaasympbarphi}, it follows that $\lim_{t \to +\infty}\frac{\Gamma^\rho_t(\mu)}{\Gamma^\rho_t(\mu^*)} = +\infty$ for any $\mu \in (b^{-1}, a^{-1})$ such that $\mu \neq \mu^*$, and that $\lim_{t \to +\infty}\frac{\Gamma^\rho_{t+1}(\mu^*)}{\Gamma^\rho_t(\mu^*)} = \left(\frac{b-a}{b+a}\right)^2$. It remains to show that the divergence result holds for any $\mu \in \real$, which follows from the convexity of $\Gamma^\rho_t$. Indeed, fix any $\mu \in \real$, and let $\varepsilon > 0$ be sufficiently small such that $\mu_\varepsilon = \varepsilon \mu + (1-\varepsilon) \mu^* \in (b^{-1}, a^{-1})$. Then, by convexity of $\Gamma^\rho_t$, we have the inequality $\liminf_{t \to +\infty}\frac{\Gamma^\rho_t(\mu_\varepsilon)}{\Gamma^\rho_t(\mu^*)} \less \varepsilon \liminf_{t \to +\infty}\frac{\Gamma^\rho_t(\mu)}{\Gamma^\rho_t(\mu^*)} + (1-\varepsilon)$. The latter left-hand side is equal to $+\infty$, and this concludes the proof.

\subsubsection*{Proof of Lemma~\ref{lemmaasympbarphi}}

We note that $\bar{\varphi}_t(\gamma) = \frac{1}{(\gamma + \kappa)^{2t}} \psi_t(\gamma)$ for any $\gamma \in (-1,1)$, where $\psi_t(\gamma) \defn \int_{-1}^{1} (z-\gamma)^{2t} \sqrt{1-z^2}\,h(z)\,\mathrm{d}z$. For $\gamma \in (-1,1)$ such that $\gamma \neq 0$, we claim that
\begin{align}
\label{eqnlaplaceapproximations}
	\psi_t(\gamma) \sim \sqrt{\frac{\pi}{8 e}} \frac{(1+|\gamma|)^{2t+3/2}}{(-\gamma/|\gamma|+\kappa)^2\, t^\frac{3}{2}}\,,\quad \mbox{and} \quad \psi_t(0) \sim \sqrt{\frac{\pi}{2e}} \frac{1+\kappa^2}{(\kappa^2-1)^2} \frac{1}{t^\frac{3}{2}}\,.
\end{align}
Given the above approximations, it follows that $\bar{\varphi}_t(\gamma) \sim \frac{1}{(-\gamma/|\gamma|+\kappa)^2} \sqrt{\frac{\pi}{8 e}} \frac{(1+|\gamma|)^{2t+3/2}}{(\gamma + \kappa)^{2t}\, t^\frac{3}{2}}$ and $\bar{\varphi}_t(0) \sim \sqrt{\frac{\pi}{2e}} \frac{1+\kappa^2}{(\kappa^2-1)^2} \frac{1}{t^\frac{3}{2} \kappa^{2t}}$. Using that $\frac{1+|\gamma|}{1+\frac{\gamma}{\kappa}} > 1$, it follows that $\lim_{t \to +\infty} \frac{\bar{\varphi}_t(\gamma)}{\bar{\varphi}_t(0)} = +\infty$. On the other hand, we have $\frac{\bar{\varphi}_{t+1}(0)}{\bar{\varphi}_t(0)} \underset{t \to +\infty}{\sim} \frac{t^\frac{3}{2}}{(t+1)^\frac{3}{2}} \kappa^{-2}$, whence $\lim_{t \to +\infty} \frac{\bar{\varphi}_{t+1}(0)}{\bar{\varphi}_t(0)} = \kappa^{-2} = \left(\frac{b-a}{b+a}\right)^2$. It remains to show the asymptotic equivalences~\eqref{eqnlaplaceapproximations}. Let $\gamma \in (-1,1)$. We distinguish three cases, that is, $\gamma < 0$, $\gamma > 0$ and $\gamma=0$.

\textbf{Case 1}: $-1 < \gamma < 0$. For any $z \in [-1,\gamma]$, we have $|z-\gamma| \less |\gamma+1| < 1$, it follows, by dominated convergence, that the integral $\lim_{t \to \infty} \int_{-1}^{\gamma} (z-\gamma)^{2t} \sqrt{1-z^2}\,h(z)\,\mathrm{d}z = 0$. Thus, it suffices to characterize the asymptotic behavior of the integral $\int_{\gamma}^1 (z-\gamma)^{2t} \sqrt{1-z^2}\,h(z)\,\mathrm{d}z = \int_{\gamma}^1 e^{t g_t(z)}\,h(z)\, \mathrm{d}z$, where we introduced $g_t(z) = 2 \log(z-\gamma) + \frac{1}{2t}\log(1-z^2)$. We have that: (i) the function $g_t$ is continuous over $(\gamma, 1)$, (ii) $\lim_{z\to 1^-}g_t(z) = -\infty$, and (iii) $\lim_{z\to \gamma^+}g_t(z) = -\infty$. Therefore, the function $g_t$ admits a maximizer $z^* \in (\gamma, 1)$. By first-order optimality conditions, we have $g_t^\prime(z^*) = 0$ which, after re-arranging, yields $(1+2t) {z^*}^2 - \gamma z^* - 2t = 0$, whose two solutions are equal to $z_\pm = \frac{\gamma}{2(1+2t)} \pm \sqrt{\frac{2t}{2t+1}+\frac{\gamma^2}{4(1+2t)^2}}$. The solution $z^*$ corresponds to the positive branch. Indeed, when $t \to +\infty$, we have that $z_- \to -1$, whereas we must have $z^* > \gamma > -1$, hence, ruling out the equality $z^* = z_-$. Thus, the maximizer $z^*$ is unique, equal to $z_+$, and, by Laplace approximation of integrals~\cite{de1981asymptotic}, we have $\psi_t(\gamma) = e^{t g_t(z^*)} h(z^*) \sqrt{\frac{2\pi}{t |g^{\prime \prime}_t(z^*)|}} \left(1 + \mathcal{O}\left(\frac{1}{t}\right)\right)$. Expanding $z^*$ at first-order in terms of $1/t$, we obtain after straightforward though tedious calculations that $\psi_t(\gamma) \sim \sqrt{\frac{\pi}{8e}} \frac{(1-\gamma)^{2t + 3/2}}{(1+\kappa)^2\,t^{\frac{3}{2}}}$, which is the claimed result.

\textbf{Case 2}: $0 < \gamma < 1$. With change of variable $z^\prime = -z$ and setting $\gamma^\prime = -\gamma$, this case study follows exactly the same lines as the previous one, and we obtain $\psi_t(\gamma) \sim \sqrt{\frac{\pi}{8e}} \frac{(1+\gamma)^{2t + 3/2}}{(-1+\kappa)^2\,t^\frac{3}{2}}$.

\textbf{Case 3}: $\gamma=0$.  Separating the integral defining $\psi_t(0)$ at $z=0$, we need to study separately the asymptotics of the two following integrals: $\int_{-1}^0 z^{2t} \sqrt{1-z^2} h(z)\,\mathrm{d}z$ and $\int_{0}^1 z^{2t} \sqrt{1-z^2} h(z)\,\mathrm{d}z$. Following similar steps as in the first case-study, we obtain that $\int_{-1}^0 z^{2t} \sqrt{1-z^2} h(z)\,\mathrm{d}z \sim \sqrt{\frac{\pi}{8e}} \frac{1}{(1+\kappa)^2 \, t^\frac{3}{2}}$ and $\int_{0}^1 z^{2t} \sqrt{1-z^2} h(z)\,\mathrm{d}z \sim \sqrt{\frac{\pi}{8e}} \frac{1}{(-1+\kappa)^2 \, t^\frac{3}{2}}$. By summing the two above expansions, we obtain the claimed result.

\section{Pre-conditioned First-order Methods with Refreshed Embeddings}
\label{sectionihsrefreshed}

In contrast to the extreme eigenvalues analysis for a fixed embedding, our analysis with refreshed embeddings provides error guarantees which are fully predictable in terms of the inverse moments~\eqref{eqninversemoments} of the matrix $C_S$. We provide next an exact error formula for the IHS with refreshed embeddings $\{S_t\}$ that satisfy Condition~\ref{condition:unbiasedmoments}. We emphasize that after the appearance of a preliminary version of this work, the follow-up work~\cite{lacotte2020limiting} extended the next result to the specific case of Haar embeddings and the SRHT in the asymptotic setting $n,d,m \to \infty$. 
\begin{theorem}
\label{theoremexacterrorrefreshed}
Suppose that the i.i.d.~random embeddings $S_t \in \real^{m \times n}$ satisfy Condition~\ref{condition:unbiasedmoments}. Then, the iterates $\{x_t\}$ of the IHS with step sizes $\{\mu_t\}$ (independent of $\{S_t\}$) satisfy the exact error formula 
\begin{align}
\label{eqnexacterrorrefreshed}
    \mathbb{E}\{\delta_t\} = \prod_{j=0}^{t-1} \Big[ \big (\,\frac{\moment_1}{ \sqrt{\moment_2}}-\mu_j\, \sqrt{\moment_2} \big)^2 + 1-\frac{\moment_1^2}{\moment_2} \Big] \mathbb{E}\{\delta_0\}\,.
\end{align}
Consequently, the minimal error is obtained with the constant step size $\mu_t \equiv \mu = \theta_1 / \theta_2$ and we have
\begin{align}
\label{eqnexactoptimalerrorrefreshed}
	\frac{\mathbb{E}\{\delta_t\}}{\mathbb{E}\{\delta_0\}} = \left( 1-\frac{\moment_1^2}{\moment_2} \right)^t\,.
\end{align}
\end{theorem}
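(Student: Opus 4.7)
\textbf{Proof plan for Theorem~\ref{theoremexacterrorrefreshed}.} The plan is to reduce the IHS update to a simple linear recursion on the error vector $\Delta_t$ in which the sketch only enters through the matrix $C_{S_t}$, and then exploit Condition~\ref{condition:unbiasedmoments} together with the independence of the refreshed embeddings.

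First, I would rewrite the IHS update in error form. Since $\nabla f(x_t) = H(x_t - x^*)$, we have $x_{t+1} - x^* = (I_d - \mu_t H_{S_t}^{\dagger} H)(x_t - x^*)$. Using the thin SVD $A = U\Sigma V^\top$, we have $H = V\Sigma^2 V^\top$, $H^{\frac{1}{2}} = \Sigma V^\top$, and $H_{S_t} = V \Sigma\, C_{S_t}\, \Sigma V^\top$. A direct computation then yields the key identity
\begin{align*}
    H^{\frac{1}{2}} H_{S_t}^{\dagger} H \;=\; C_{S_t}^{-1} H^{\frac{1}{2}},
\end{align*}
so that multiplying the error equation by $H^{\frac{1}{2}}$ gives the clean recursion $\Delta_{t+1} = (I_d - \mu_t\, C_{S_t}^{-1})\, \Delta_t$.

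Next, I would take conditional expectations. The iterate $\Delta_t$ is measurable with respect to $S_0,\dots,S_{t-1}$, hence independent of $S_t$. Expanding $\|\Delta_{t+1}\|_2^2$ and conditioning on $\Delta_t$, I obtain
\begin{align*}
    \mathbb{E}\{\|\Delta_{t+1}\|_2^2 \mid \Delta_t\} \;=\; \Delta_t^\top \, \mathbb{E}\!\left\{\bigl(I_d - \mu_t C_{S_t}^{-1}\bigr)^2\right\} \Delta_t\,.
\end{align*}
By Condition~\ref{condition:unbiasedmoments}, $\mathbb{E}\{C_{S_t}^{-1}\} = \theta_1 I_d$ and $\mathbb{E}\{C_{S_t}^{-2}\} = \theta_2 I_d$, so the middle matrix equals $(1 - 2\mu_t \theta_1 + \mu_t^2 \theta_2)\,I_d$. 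Therefore $\mathbb{E}\{\delta_{t+1}\} = (1 - 2\mu_t \theta_1 + \mu_t^2 \theta_2)\, \mathbb{E}\{\delta_t\}$, and completing the square rewrites the per-step factor as
\begin{align*}
    1 - 2\mu_t \theta_1 + \mu_t^2 \theta_2 \;=\; \bigl(\mu_t \sqrt{\theta_2} - \theta_1/\sqrt{\theta_2}\bigr)^2 + 1 - \theta_1^2/\theta_2\,.
\end{align*}
Iterating this one-step identity from $j=0$ to $t-1$ yields exactly formula~\eqref{eqnexacterrorrefreshed}, using that the $\mu_j$ are chosen independently of the sketches.

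Finally, the completed-square form shows that each factor is minimized at $\mu_t = \theta_1/\theta_2$ with minimum value $1 - \theta_1^2/\theta_2$, which is strictly less than $1$ by Cauchy--Schwarz applied to the eigenvalues of $C_S^{-1}$; iterating the optimal choice gives~\eqref{eqnexactoptimalerrorrefreshed}. The only technical obstacle is the algebraic identity $H^{\frac{1}{2}} H_{S_t}^{\dagger} H = C_{S_t}^{-1} H^{\frac{1}{2}}$ (which hinges on $A$ being full-column rank and $S_t A$ being full-column rank a.s., as assumed in the notation section); once this is in place, the rest is essentially two lines and the rotational invariance encoded by Condition~\ref{condition:unbiasedmoments}.
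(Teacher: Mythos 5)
Your proof is correct and follows essentially the same route as the paper's: reduce the IHS update to the clean recursion $\Delta_{t+1} = (I_d - \mu_t C_{S_t}^{-1})\Delta_t$, take (conditional) expectations using independence of the refreshed sketch from the current iterate, apply Condition~\ref{condition:unbiasedmoments} to collapse the middle matrix to a scalar times the identity, and complete the square. You make two steps more explicit than the paper --- the identity $H^{1/2}H_{S_t}^\dagger H = C_{S_t}^{-1}H^{1/2}$ and the conditioning/measurability argument --- but the substance is identical.
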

An important aspect of the results above is that the expected error formula is exact, and it holds universally for every input data $(A,b)$ with equality. In particular, one cannot hope to do better by adjusting the step-sizes as long as they are independent of the randomness in the sketches. The required Condition~\ref{condition:unbiasedmoments} on the inverse moments for Theorem~\ref{theoremexacterrorrefreshed} contrasts to earlier works on preconditioning, where $S$ is required to satisfy subspace embeddings properties for convergence results to hold. An exception is the method proposed in~\cite{cohen2014preconditioning}, where the authors obtain upper-bounds for leverage score based on row sampling by directly analyzing the expectation of the error iteration. 

From Corollary~3.5 in~\cite{knyazev2008steepest}, we get that FCG with $0 < k_{t+1} \less k_t + 1$ and $k_t \less t$ improves locally on the IHS update, i.e.,
\begin{align}
\label{eqnfcglocalimprovementihs}
    \mathbb{E}\{\delta_{t+1}\} \less \min_\mu \frac{1}{2} \mathbb{E}\{\|x_t - \mu H_S^\dagger \nabla f(x_t)\|_H^2\}\,,
\end{align}
Using the exact error formula~\eqref{eqnexactoptimalerrorrefreshed} of the IHS and the local improvement inequality~\eqref{eqnfcglocalimprovementihs}, we obtain the following result.
\begin{corollary}
\label{corollaryupperboundfcg}
Suppose that the i.i.d.~random embeddings $S_t \in \real^{m \times n}$ satisfy Condition~\ref{condition:unbiasedmoments}. Then, the iterates $\{x_t\}$ of FCG with $0 < k_{t+1} \less k_t + 1$ and $k_t \less t$ satisfy
\begin{align}
\label{eqnupperboundfcg}
    \frac{\mathbb{E}\{\delta_t\}}{\mathbb{E}\{\delta_0\}} \less \left(1-\frac{\theta_1^2}{\theta_2}\right)^t\,.
\end{align}
\end{corollary}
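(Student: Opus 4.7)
The plan is to combine the local improvement inequality~\eqref{eqnfcglocalimprovementihs} with the exact one-step IHS contraction from Theorem~\ref{theoremexacterrorrefreshed}, and then iterate. Introduce the filtration $\mathcal{F}_t \defn \sigma(S_0, \ldots, S_{t-1})$. By construction the FCG iterate $x_t$ is $\mathcal{F}_t$-measurable while $S_t$ is independent of $\mathcal{F}_t$ and identically distributed to every other $S_j$.

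Next, I would argue that~\eqref{eqnfcglocalimprovementihs} lifts to the conditional inequality
\begin{align*}
    \mathbb{E}\{\delta_{t+1} \mid \mathcal{F}_t\} \less \min_{\mu \in \real}\, \frac{1}{2}\, \mathbb{E}\!\left\{ \big\|\,x_t - \mu H_{S_t}^\dagger \nabla f(x_t) - x^*\big\|_H^2 \,\Big|\, \mathcal{F}_t \right\}\,.
\end{align*}
Conditional on $\mathcal{F}_t$, the quantity on the right is precisely the expected error after one step of the IHS launched from the (now deterministic) point $x_t$ with the fresh sketch $S_t$. Since the error formula~\eqref{eqnexactoptimalerrorrefreshed} is universal in the initial iterate, instantiating it with a single step at the optimal constant step size $\mu = \theta_1/\theta_2$ collapses the right-hand side to $(1 - \theta_1^2/\theta_2)\,\delta_t$. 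Taking the unconditional expectation of both sides then produces the one-step contraction $\mathbb{E}\{\delta_{t+1}\} \less (1 - \theta_1^2/\theta_2)\,\mathbb{E}\{\delta_t\}$, and a straightforward induction from $t = 0$ yields the desired bound~\eqref{eqnupperboundfcg}.

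The main obstacle is the first step, namely verifying that~\eqref{eqnfcglocalimprovementihs}, which the paper imports from Corollary~3.5 of~\cite{knyazev2008steepest}, is genuinely a pathwise statement that survives conditioning on $\mathcal{F}_t$. This should follow because the underlying argument is purely a single-step variational property of the FCG update: at iteration $t$, the direction $p_t$ is orthogonalized within a subspace containing $v_t = H_{S_t}^\dagger \nabla f(x_t)$, and the resulting $x_{t+1}$ minimizes $\|x - x^*\|_H^2$ over an affine subspace that contains every IHS candidate $x_t - \mu H_{S_t}^\dagger \nabla f(x_t)$. This algebraic property holds pointwise, so no past randomness is invoked, and the inequality passes through a conditional expectation over $S_t$ alone.
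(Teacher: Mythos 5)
Your proposal is correct and follows the same route the paper sketches: combine the pathwise local-improvement property of FCG (Corollary~3.5 of Knyazev--Lashuk, which the paper records as~\eqref{eqnfcglocalimprovementihs}) with the one-step IHS expectation computation of Theorem~\ref{theoremexacterrorrefreshed}. The paper compresses this into a single sentence; you usefully make explicit the conditioning on the filtration $\mathcal{F}_t$, which is exactly what is needed to pass from the pathwise Knyazev--Lashuk bound to the unconditional one-step contraction $\mathbb{E}\{\delta_{t+1}\} \less (1-\theta_1^2/\theta_2)\,\mathbb{E}\{\delta_t\}$ before iterating.
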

Using the formulas~\eqref{eqngaussianmoments} of $\theta_1$ and $\theta_2$, we obtain for refreshed Gaussian embeddings with $m \gre d+4$ that $1-\theta_1^2/\theta_2 = \frac{d}{m} (1+o(1))$. Note that this scales as the high-probability upper bound~\eqref{eqnerrorpcggaussian} on the error of PCG based on extreme eigenvalues analysis. Using the formulas~\eqref{eqnsrhtmoments} of $\theta_1$ and $\theta_2$, we obtain for Haar embeddings the finite-sample approximation $1-\theta_1^2/\theta_2 \approx \frac{d}{m}\frac{(n-m)m}{d^2 + nm - 2dm}$. Note that this is always smaller than the rate $d/m$ of Gaussian embeddings. How does this compare to the rate of PCG? After the appearance of a preliminary version of the present work, the follow-up work~\cite{lacotte2020optimal} carried out an exact extreme eigenvalue analysis of preconditioned first-order methods with a fixed embedding in the asymptotic setting $m,n,d \to +\infty$. It is shown that the rate of Polyak IHS with a fixed embedding (and thus of PCG) scales as $1-\theta_1^2/\theta_2$ for Haar matrices. Hence, based on our analysis so far, it is unclear whether FCG with refreshed Gaussian or Haar embeddings outperforms PCG. 

Corollary~3.5 in~\cite{knyazev2008steepest} actually provides a bound better than~\eqref{eqnfcglocalimprovementihs}: in fact, it is shown that FCG improves locally on the Polyak IHS update, that is,
\begin{align}
\label{eqnfcglocalimprovementpolyakihs}
    \mathbb{E}\{\delta_{t+1}\} \less \min_{\mu, \beta} \frac{1}{2} \mathbb{E}\{\|x_t - \mu H_S^\dagger \nabla f(x_t) - \beta (x_t - x_{t-1})\|_H^2\}\,.
\end{align}
It is therefore of interest to characterize the convergence rate of the optimal Polyak-IHS with refreshed embeddings, to see whether we can improve the upper bound~\eqref{eqnupperboundfcg} on FCG. Surprisingly, in contrast to standard first-order methods, heavy-ball momentum does not accelerate the IHS with refreshed embeddings.
\begin{theorem}
\label{theoremmomentumrefreshed}
Let $S_t$ be a sequence of i.i.d.~random embeddings that satisfy Condition~\ref{condition:unbiasedmoments}. Given step size and momentum parameters $\mu, \beta$, there exist initial points $x_0,x_1 \in \real^d$ such that the error of Polyak IHS is lower bounded as
\begin{align}
\label{eqnmomentumrefreshed}
    \limsup_{t \to \infty} \left(\frac{\mathbb{E}\{\delta_t\}}{\mathbb{E}\{\delta_0\}}\right)^\frac{1}{t} \gre 1-\frac{\theta_1^2}{\theta_2} \,.
\end{align}
\end{theorem}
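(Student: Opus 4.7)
I reduce the vector-valued Polyak--IHS recursion $\Delta_{t+1} = A_t\Delta_t - \beta\Delta_{t-1}$, with $A_t \defn (1+\beta)I - \mu C_{S_t}^{-1}$, to a closed $3$-dimensional linear system for the second-moment scalars $p_t = \mathbb{E}\|\Delta_t\|^2$, $q_t = \mathbb{E}\|\Delta_{t-1}\|^2$, $r_t = \mathbb{E}\langle\Delta_t,\Delta_{t-1}\rangle$, and then analyse the spectral radius of the resulting $3\times 3$ transition matrix $T$. Conditioning on $(S_0,\dots,S_{t-1})$, Condition~\ref{condition:unbiasedmoments} yields $\mathbb{E}[A_t] = \eta I$ and $\mathbb{E}[A_t^2] = \alpha I$ with $\eta \defn 1+\beta-\mu\theta_1$ and $\alpha \defn \eta^2 + \mu^2(\theta_2-\theta_1^2)$. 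Expanding $\|\Delta_{t+1}\|^2$ and $\langle\Delta_{t+1},\Delta_t\rangle$ then gives the closed recursion
\begin{align*}
p_{t+1} = \alpha p_t + \beta^2 q_t - 2\beta\eta r_t,\qquad q_{t+1} = p_t,\qquad r_{t+1} = \eta p_t - \beta r_t.
\end{align*}
By Cayley--Hamilton, $p_t$ satisfies the scalar $3$rd-order linear recurrence whose characteristic polynomial is $P(\lambda) = \lambda^3 - (\alpha-\beta)\lambda^2 - \beta(\alpha+\beta-2\eta^2)\lambda - \beta^3$; for any initial triple with non-vanishing projection on the dominant invariant subspace of $T$, one has $\limsup_t p_t^{1/t} = \rho(T)$, so the statement reduces to proving the uniform bound $\rho(T) \geq \kappa \defn 1-\theta_1^2/\theta_2$ over every $(\mu,\beta)$.

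\textbf{Spectral lower bound via a case split.} If $|\beta| \geq \kappa$, the product-of-roots identity $\prod_i \lambda_i = \beta^3$ gives $\rho(T) \geq |\beta| \geq \kappa$ immediately. If $|\beta| < \kappa$, I establish $P(\kappa) \leq 0$; since $P(+\infty) = +\infty$, this produces a real eigenvalue $\geq \kappa$. Using $\mu\theta_1 = 1+\beta - \eta$ (so $\mu^2(\theta_2-\theta_1^2) = (1+\beta-\eta)^2 \kappa/(1-\kappa)$) and substituting $u \defn 1+\beta-\eta$, I view $P(\kappa)$ as a quadratic $a_2 u^2 + a_1 u + a_0$ in $u$ with coefficients
\begin{align*}
a_2 = -\tfrac{\kappa\,[\kappa+\beta(2\kappa-1)]}{1-\kappa},\qquad a_1 = 2\kappa(\kappa-\beta)(1+\beta),\qquad a_0 = (1-\kappa)(\kappa-\beta)(\beta^2 - \kappa).
\end{align*}
For $|\beta| \leq \kappa$, elementary sign-checking gives $a_2 \leq 0$ and $a_0 \leq 0$, so it remains to verify the non-positive-discriminant condition $a_1^2 \leq 4 a_0 a_2$. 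After clearing the common positive factor $4\kappa(\kappa-\beta)/(1-\kappa)$, this is equivalent to the polynomial identity
\begin{align*}
(\kappa-\beta^2)\,[\kappa + \beta(2\kappa-1)] - \kappa(\kappa-\beta)(1+\beta)^2 = (1-\kappa)\,\beta^2\,(\kappa+\beta),
\end{align*}
whose right-hand side is manifestly non-negative for $\beta \geq -\kappa$. Hence $P(\kappa) \leq 0$ throughout $|\beta| \leq \kappa$, completing the uniform bound $\rho(T) \geq \kappa$. Valid initial points $x_0,x_1$ realising the claimed $\limsup$ exist because the cone $\{(p_1,q_1,r_1)\colon p_1,q_1\geq 0,\,|r_1|\leq\sqrt{p_1 q_1}\}$ has nonempty interior in $\real^3$ and therefore generically meets the complement of the non-dominant $T$-invariant subspaces; a concrete choice is $x_1 = x_0$ with $\Delta_0$ a generic vector of $\real^d$.

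\textbf{Main obstacle.} The hardest step is the discriminant identity above: the clean factorisation $(1-\kappa)\beta^2(\kappa+\beta)$ of the residual is what collapses the two-parameter optimisation over $(\mu,\beta)$ into the single uniform lower bound $\rho(T)\geq\kappa$. Conceptually, it expresses that whatever reduction in the deterministic contraction $|\eta|$ is bought by introducing momentum $\beta\neq 0$, it is exactly cancelled by the induced stochastic variance $\mu^2(\theta_2-\theta_1^2)$, so that the overall second-moment contraction rate is pinned at its IHS value $\kappa$, attained at $\beta = 0$, $\mu = \theta_1/\theta_2$. All remaining steps---the $3\times 3$ reduction, the product-of-roots case, and the realisability of the initial conditions---are then direct.
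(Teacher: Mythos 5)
Your $3\times3$ second-moment reduction is the same linear dynamical system the paper derives (their state is $(\delta_{t+1},\,-\tfrac{\beta}{2}\mathbb{E}[\Delta_t^\top\Delta_{t+1}],\,\beta^2\delta_t)$, a rescaling of your $(p_{t+1},r_{t+1},p_t)$), and your characteristic polynomial $\lambda^3-(\alpha-\beta)\lambda^2-\beta(\alpha+\beta-2\eta^2)\lambda-\beta^3$ is identical to the paper's $\chi_{\mu,\beta}$ after the dictionary $\alpha\leftrightarrow\eta_{\text{paper}}$, $\eta\leftrightarrow\gamma_{\text{paper}}$, as is the product-of-roots observation $\rho(T)\gre|\beta|$. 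Where you genuinely diverge — and improve on — the paper is in the spectral lower bound. The paper proves $\inf_{\mu,\beta}\Lambda=\kappa$ by a long contradiction argument occupying all of Section~6: a homotopy along a continuous root path (Lemma~6.1), anchored by a Hessian computation establishing strict local optimality of $(\mu^*,0)$ (Lemma~6.4), followed by a detailed sign analysis of the cubic $\beta\mapsto P_\alpha(\beta)$ (Lemmas~6.2, 6.5–6.10), with the special case $\rho^*=1/2$ patched by a separate continuity argument. You instead prove $P(\kappa)\less 0$ directly for $|\beta|\less\kappa$ by re-expressing $P(\kappa)$ as a quadratic in $u=\mu\theta_1$; your coefficients $a_2,a_1,a_0$ and the discriminant factorization $(\kappa-\beta^2)[\kappa+\beta(2\kappa-1)]-\kappa(\kappa-\beta)(1+\beta)^2=(1-\kappa)\beta^2(\kappa+\beta)$ all check out, the concavity ($a_2\less 0$) and endpoint ($a_0\less 0$) signs hold on the stated range, and $P(\kappa)\less 0$ together with $P(+\infty)=+\infty$ delivers a real root at or beyond $\kappa$. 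This is a much more economical route: it is a one-page algebraic computation that optimizes out $\mu$ in closed form instead of chasing root trajectories, and it is uniform in $\kappa$, so the $\rho^*=1/2$ edge case disappears.

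Two small points. First, the "common positive factor" you clear is $4\kappa(\kappa-\beta)$, not $4\kappa(\kappa-\beta)/(1-\kappa)$ — the $(1-\kappa)$ already cancels between $a_0$ and $a_2$ — but this is a bookkeeping slip with no effect on the inequality. Second, the concrete initialization $x_1=x_0$ (i.e., $X_1\propto(1,1,1)$) is not justified: nothing guarantees that this particular ray has nonzero projection onto the dominant $T$-invariant subspace. The existence argument you give just before it — the achievable cone of $(p_1,q_1,r_1)$ has nonempty interior, hence meets the complement of the (at most $2$-dimensional) non-dominant subspace — is the correct and sufficient one, and is essentially what the paper invokes as a "fundamental result in time-invariant linear dynamical systems". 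You should drop the purported concrete choice or verify it. With that fix, the proof is complete.
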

The result of Theorem~\ref{theoremmomentumrefreshed} is asymptotic, i.e., it holds for $t\to \infty$, and this is essentially due to the approximation of the spectral radius by Gelfand's formula (see, e.g.,~\cite{kozyakin2009}). In contrast, FCG terminates within $d-1$ steps (with exact arithmetic). Despite this, the lower bound~\eqref{eqnmomentumrefreshed} remains informative about the empirical behavior of the Polyak-IHS with refreshed embeddings as we will clearly observe in numerical simulations in Section~\ref{sectionnumericalsimulations}.

\subsection{Proofs of Results in Section~\ref{sectionihsrefreshed}}


\subsubsection*{Proof of Theorem~\ref{theoremexacterrorrefreshed}}

Multiplying both sides of the IHS update~\eqref{eqnihs} by $H^\frac{1}{2}$, subtracting $H^\frac{1}{2} x^*$ and using that $H x^* = b$, we obtain the recursion formula $\Delta_{t+1} = (I_d - \mu_t C_{S_t}^{-1}) \Delta_t$. Taking the squared norm, it follows that $\|\Delta_{t+1}\|_2^2 = \Delta_t^\top (I_d - \mu_t C_{S_t}^{-1})^2 \Delta_t$. Taking expectations, using independence of the embeddings and using Condition~\ref{condition:unbiasedmoments}, we obtain the error recursion formula $\mathbb{E}\{\delta_{t+1}\} = (1-2 \mu_t \theta_1 + \mu_t^2 \theta_2) \cdot \mathbb{E}\{\delta_t\}$. It follows by induction that $\mathbb{E}\{\delta_t\} = \prod_{j=0}^{t-1} (1-2\mu_t \theta_1 + \mu_t^2 \theta_2) \mathbb{E}\{\delta_0\}$. Observing that $1-2\mu_t \theta_1 + \mu_t^2 \theta_2 = (\frac{\theta_1}{\sqrt{\theta_2}} - \mu_t \sqrt{\theta_2})^2 + 1-\frac{\theta_1^2}{\theta_2}$, we obtain that the minimal expected error is attained for $\mu_t = \theta_1 / \theta_2$, and the convergence rate is equal to $1-\theta_1^2/\theta_2$, which is the claimed result.

\subsubsection*{Proof of Theorem~\ref{theoremmomentumrefreshed}}

Multiplying both sides of the Polyak IHS update~\eqref{eqnpolyakihs} by $H^\frac{1}{2}$,  subtracting $H^\frac{1}{2} x^*$ and using that $H x^* = b$, we obtain the recursion formula $\Delta_{t+1} = \Delta_t - \mu C_{S_t}^{-1} \Delta_t + \beta(\Delta_t-\Delta_{t-1})$. For $t \gre 0$, setting $a_t \defn \delta_{t+1}$, $b_t \defn -\frac{\beta}{2} \mathbb{E}[\Delta_{t}^\top \Delta_{t+1}]$ and $c_t \defn \beta^2 \delta_t$, we obtain from the latter recursion formula after simple calculations that
\begin{align}
\label{EqnLDSMomentum}
X_{t+1}
= \begin{bmatrix} \eta & 2 \gamma & 1 \\ - \beta \gamma & - \beta & 0 \\ \beta^2 & 0 & 0 \end{bmatrix}
X_t\,,
\end{align}
where we introduced the notations $X_t = [a_t, b_t, c_t]^\top$, $\eta \defn (1+\beta)^2 - 2\mu \theta_1 (1+\beta) + \mu^2 \theta_2$ and $\gamma \defn 1 + \beta - \mu \theta_1$. We denote the characteristic polynomial of this time-invariant linear dynamical system by $\chi_{\mu, \beta}(X)$ and its root radius (i.e., the largest module of its roots) by $\Lambda(\mu,\beta)$. Given $\mu, \beta > 0$, it follows from~\eqref{EqnLDSMomentum} and fundamental results in time-invariant linear dynamical systems that there exist initializations of $x_0$ and $x_1$ such that $\limsup \big(\frac{\|X_t\|_2}{\|X_0\|_2}\big)^{\frac{1}{t}} \gre \Lambda(\mu,\beta)$. This further implies that
\begin{align}
    \limsup_{t \to \infty} \left(\frac{\mathbb{E}\{\delta_t\}}{\mathbb{E}\{\delta_0\}}\right)^{\frac{1}{t}} \gre \Lambda(\mu,\beta)\,.
\end{align}
The next result concludes the present analysis. Its proof is fairly involved and we defer it to Section~\ref{sectionminimalrootradius}.
\begin{theorem}[Minimal root radius]
\label{theoremminimalrootradius}
It holds that
\begin{align}
    \label{eqnminimalrootradius}
    \inf_{\mu, \beta \gre 0} \Lambda(\mu, \beta) = 1 - \frac{\theta_1^2}{\theta_2}\,.
\end{align}
\end{theorem}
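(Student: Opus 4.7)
The plan is to compute $\chi_{\mu,\beta}(X) = \det(XI - M)$ explicitly and then, for every admissible pair $(\mu, \beta)$, locate a root of magnitude at least $r^* \defn 1 - \theta_1^2/\theta_2$. Expanding the determinant of the $3 \times 3$ matrix $M$ from~\eqref{EqnLDSMomentum} along its third column gives
\[
\chi_{\mu,\beta}(X) \;=\; X^3 + (\beta - \eta) X^2 + \beta(2\gamma^2 - \eta - \beta) X - \beta^3,
\]
so in particular the product of the three roots equals $\beta^3$. The upper bound $\inf \Lambda \less r^*$ is immediate: when $\beta = 0$, $M$ is upper triangular with diagonal $(\eta, 0, 0)$, and the quadratic $\mu \mapsto \eta|_{\beta=0} = 1 - 2\mu\theta_1 + \mu^2\theta_2$ attains its minimum value $r^*$ at $\mu = \theta_1/\theta_2$, yielding $\Lambda(\theta_1/\theta_2, 0) = r^*$.

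For the matching lower bound $\Lambda(\mu, \beta) \gre r^*$, I would split on the size of $\beta$. If $\beta \gre r^*$, the identity $|\lambda_1 \lambda_2 \lambda_3| = \beta^3$ forces some eigenvalue to satisfy $|\lambda_i| \gre \beta \gre r^*$. If $\beta < r^*$, the strategy is to prove $\chi_{\mu,\beta}(r^*) \less 0$; combined with $\chi_{\mu,\beta}(X) \to +\infty$ as $X \to +\infty$, the intermediate value theorem then furnishes a real root in $[r^*, +\infty)$ and hence $\Lambda \gre r^*$.

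To establish $\chi_{\mu,\beta}(r^*) \less 0$ when $\beta < r^*$, the key rewriting is
\[
\chi_{\mu,\beta}(r^*) \;=\; (r^* - \beta)\bigl[(r^* + \beta)^2 - r^* \gamma^2\bigr] - \tau\, r^*(r^* + \beta),
\]
where $\tau \defn \eta - \gamma^2 = \mu^2(\theta_2 - \theta_1^2) \gre 0$ (nonnegativity is Cauchy--Schwarz on the inverse moments, i.e., $\theta_1^2 \less \theta_2$). Setting $u = \mu \theta_1$ and using $\theta_2 - \theta_1^2 = \theta_2 r^*$ to write $\tau = r^* u^2/(1-r^*)$, the right-hand side becomes a quadratic in $u$ that is strictly concave (its leading coefficient is $-r^*(r^*-\beta) - r^{*2}(r^*+\beta)/(1-r^*) < 0$). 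Its unique critical point is
\[
u^* \;=\; \frac{(1-r^*)(r^* - \beta)(1+\beta)}{r^*(1+2\beta) - \beta},
\]
which lies in the feasible region $\{u \gre 0\}$ since the denominator is strictly positive ($r^*(1+2\beta)-\beta > \beta(1+2\beta)-\beta = 2\beta^2 \gre 0$ because $r^* > \beta$).

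The main obstacle is the algebraic collapse at $u = u^*$. Substituting $u^*$ back and exploiting the identity
\[
r^*(r^* + \beta) + (r^* - \beta)(1 - r^*) \;=\; r^*(1 + 2\beta) - \beta,
\]
the expression simplifies to
\[
\max_{u \gre 0} \chi_{\mu,\beta}(r^*) \;=\; -\,\frac{(r^{*2} - \beta^2)\, \beta^2\, (1-r^*)^2}{r^*(1+2\beta) - \beta} \;\less\; 0.
\]
This establishes $\chi_{\mu,\beta}(r^*) \less 0$ uniformly in $\mu \gre 0$ for $\beta \in [0, r^*)$, completing the lower bound. Together with the upper bound, the infimum in~\eqref{eqnminimalrootradius} equals $r^*$ and is attained at $(\mu, \beta) = (\theta_1/\theta_2, 0)$; in particular, heavy-ball momentum cannot improve on the optimal IHS rate of Theorem~\ref{theoremexacterrorrefreshed}.
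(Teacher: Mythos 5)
Your proof is correct and takes a genuinely different, and substantially more economical, route than the paper's. The paper argues by contradiction: it assumes some $(\widehat\alpha,\widehat\beta)$ achieves root radius below $\rho^*$, then uses a continuous-path / root-crossing argument (Lemma~\ref{lemmaredundancy}) to produce a pair with $\beta'\in(0,\rho^*)$ satisfying $\chi_{\alpha',\beta'}(\rho^*)=0$, and finally rules this out (Lemma~\ref{lemmanorootwithin}) by a lengthy analysis of the cubic $P_\alpha(\beta)$ via its derivative's larger root $\beta_+(\alpha)$, a battery of helper lemmas (Lemmas~\ref{lemmamaximalrealroot}--\ref{lemmarangealpha}), plus a separate continuity argument to treat $\rho^*=\tfrac12$. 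You instead argue directly on the characteristic polynomial itself: for $\beta\gre r^*$ the product-of-roots identity $\lambda_1\lambda_2\lambda_3=\beta^3$ already gives $\Lambda\gre\beta\gre r^*$; for $\beta<r^*$ the clean rewriting $\chi_{\mu,\beta}(r^*)=(r^*-\beta)\big[(r^*+\beta)^2-r^*\gamma^2\big]-\tau r^*(r^*+\beta)$ with $\tau=\eta-\gamma^2=\mu^2(\theta_2-\theta_1^2)$ exhibits $\chi_{\mu,\beta}(r^*)$ as a strictly concave quadratic in $u=\mu\theta_1$ whose maximum over $u\gre 0$, after the algebraic collapse you record, equals $-\,(r^{*2}-\beta^2)\beta^2(1-r^*)^2/\big(r^*(1+2\beta)-\beta\big)\less 0$; the intermediate value theorem then yields a real root in $[r^*,+\infty)$. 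I checked the characteristic polynomial against the paper's coefficients $a_0,a_1,a_2$ (they agree via $\eta=\eta_0+2\gamma_0\beta+\beta^2$, $\gamma=\gamma_0+\beta$), the rewriting at $X=r^*$, the formula for $u^*$ and the positivity of its denominator $r^*(1+2\beta)-\beta$, and the final maximum value; all are correct. Your approach buys uniformity (no $r^*=\tfrac12$ exception), explicitness (a closed-form maximum), and brevity; the paper's argument, while heavier, is more structural in that it tracks how individual roots move under a homotopy of parameters. One cosmetic point: at $\beta=0$, $u=u^*$ the maximum equals $0$, so the bound should be stated as $\chi_{\mu,\beta}(r^*)\less 0$ (non-strict), which is exactly what IVT needs.
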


\section{Numerical Comparisons}
\label{sectionnumericalsimulations}

Differently from earlier works on variable preconditioning~\cite{knyazev2008steepest, golub1999inexact, axelsson1991black, notay2000flexible} and their worst-case approximation error analysis, we aim to investigate whether randomized and independent variable preconditioners may improve the performance over a fixed linear operator, thanks to additional randomness. Strikingly, the answer seems negative. We compare numerically the several methods explored so far. Importantly, we observe that PCG with a fixed SRHT-based preconditioner yields the best numerical performance.

We have established for PCG with a fixed preconditioner $H_S$ that, conditional on $\mathcal{E}_\rho$, the error satisfies the upper bound $\delta_t / \delta_0 \less 4 \rho^t$. On the other hand, we have established for FCG with refreshed embeddings that the error satisfies the upper bound $\mathbb{E}\{\delta_t\} / \mathbb{E}\{\delta_0\} \less (1-\theta_1^2/\theta_2)^t$. It is of interest to investigate numerically whether these upper bounds are tight or overly conservative. On the other hand, with refreshed embeddings, we expect to observe empirically the exact error formula $\mathbb{E}\{\delta_t\}/\mathbb{E}\{\delta_0\} = (1-\theta_1^2/\theta_2)^t$ for the IHS with step size $\mu = \theta_1/\theta_2$, as well as a worse performance of the Polyak-IHS for any $\beta > 0$ (and say $\mu = \theta_1/\theta_2$ for simple comparison with the IHS). In particular, we consider several truncation thresholds $k_t$ for FCG. We compute FCG with full orthogonalization $k_t = t-1$, that is, GCC. We also consider FCG with $p_t = 1$. According to~\cite{knyazev2008steepest} (see Section 7 therein), this is equivalent to the inexact preconditioned conjugate gradient method (IPCG) proposed by~\cite{golub1999inexact}. We repeat all these experiments for both Gaussian embeddings (Figure~\ref{figurecomparisongaussian}), Haar matrices (Figure~\ref{figurecomparisonhaar}) as well as the SRHT (Figure~\ref{figurecomparisonsrht}). 

Note that the SRHT does not satisfy Condition~\ref{condition:unbiasedmoments} and it is unknown whether the exact error formula~\eqref{eqnexacterrorrefreshed} of the IHS with refreshed SRHT holds. However, we use heuristically the step size $\mu = \mbox{tr}\,\mathbb{E}\{C_S^{-1}\} / \mbox{tr}\, \mathbb{E}\{C_S^{-2}\}$ based on the finite-sample approximations~\eqref{eqnsrhttrace} of these trace formula. We expect the IHS with refreshed SRHT to have similar performance as with Haar embeddings, since the trace formula and the limiting spectral distributions of the matrices $C_S$ for both the SRHT and Haar embeddings are asymptotically equal (see~\cite{dobriban2019asymptotics} for the latter fact).

Numerical results are consistent with our theoretical predictions. The theoretical bound $(1-\theta_1^2/\theta_2)^t$ predicts with high accuracy the relative error $\delta_t/\delta_0$ for the IHS with any of the aforementioned embeddings. The performance of GCC improves on the performance of IPCG, which itself improves on the IHS. Finally, we observe that heavy-ball momentum does not improve the IHS. As expected (and already observed in~\cite{lacotte2020limiting}), the numerical performance of the IHS with refreshed SRHT is close to that with Haar matrices.

\begin{figure}[h!]
	\centering
	\includegraphics[width=\textwidth]{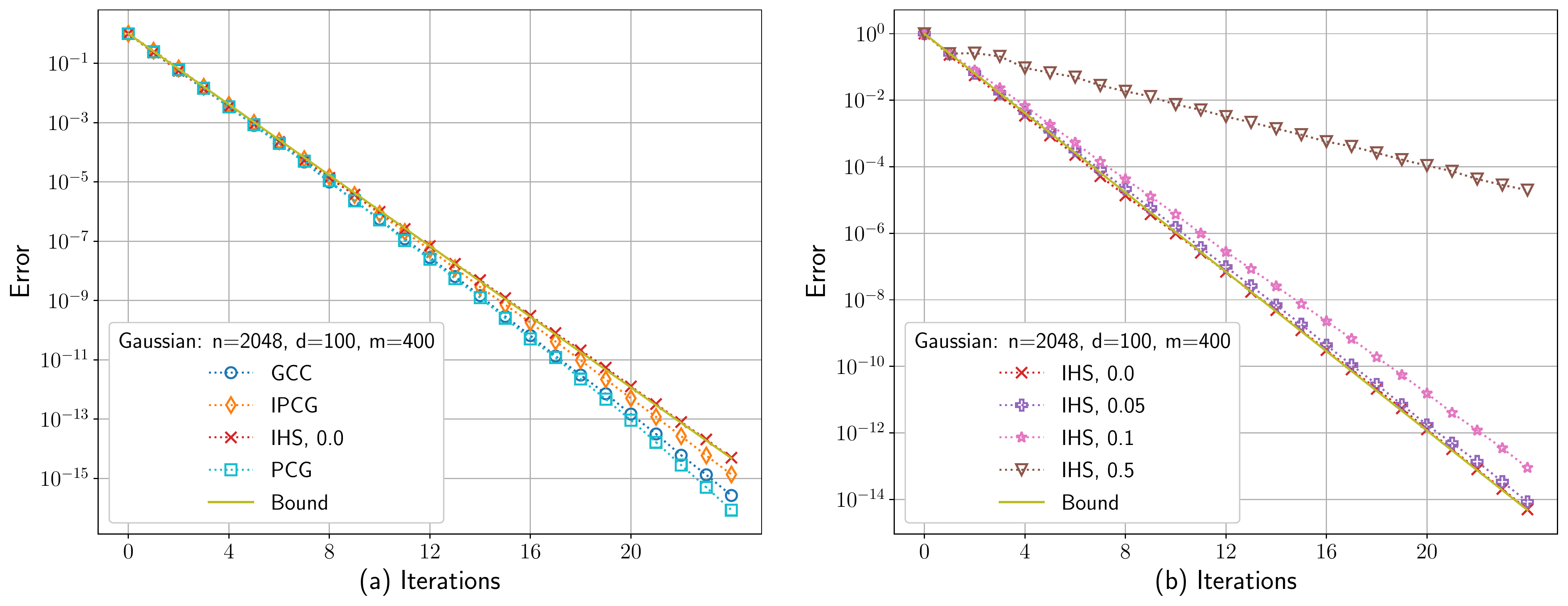}
	\caption{Comparison of the relative error $\delta_t/\delta_0$ for GCC, IPCG, IHS and Polyak IHS with $\beta \in \{0.05, 0.1, 0.5\}$ with refreshed Gaussian embeddings, and PCG with a fixed Gaussian embedding. The curve 'Bound' is the theoretical bound $(1-\theta_1^2/\theta_2)^t$.}
	\label{figurecomparisongaussian}
\end{figure}

\begin{figure}[h!]
	\centering
	\includegraphics[width=\textwidth]{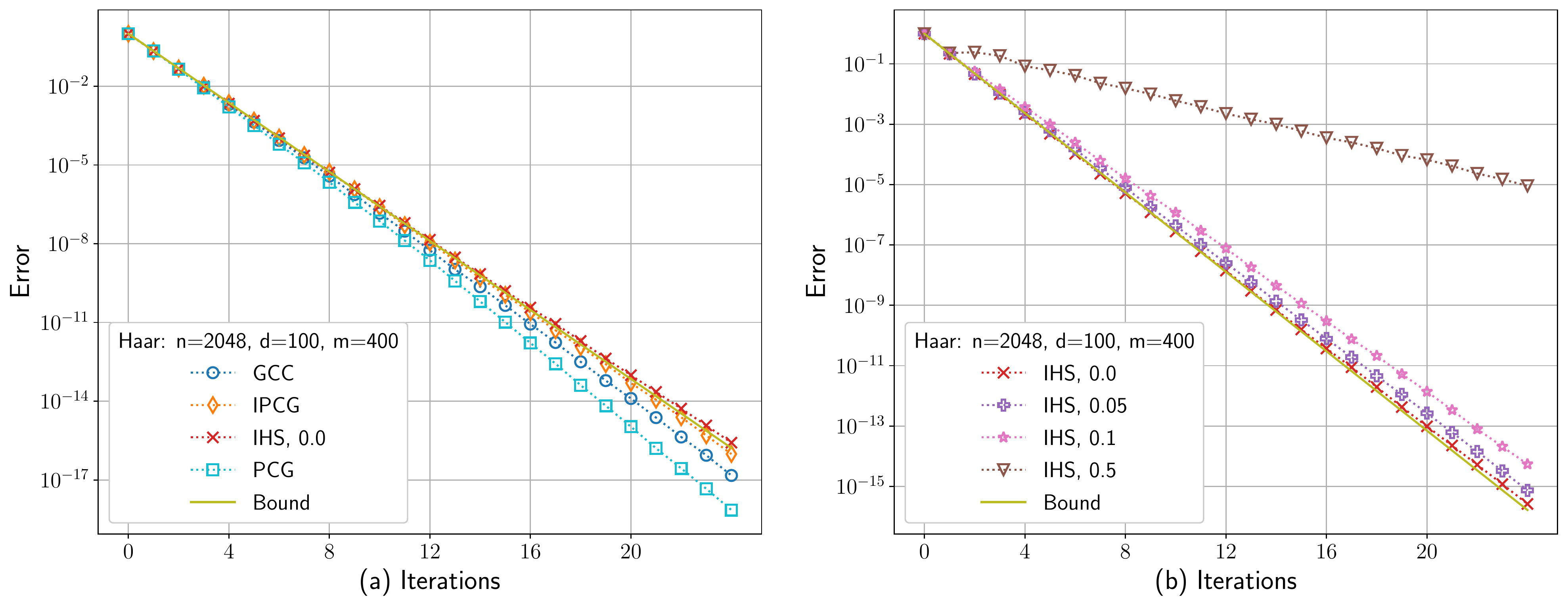}
	\caption{Comparison of the relative error with Haar matrices.}
	\label{figurecomparisonhaar}
\end{figure}

\begin{figure}[h!]
	\centering
	\includegraphics[width=\textwidth]{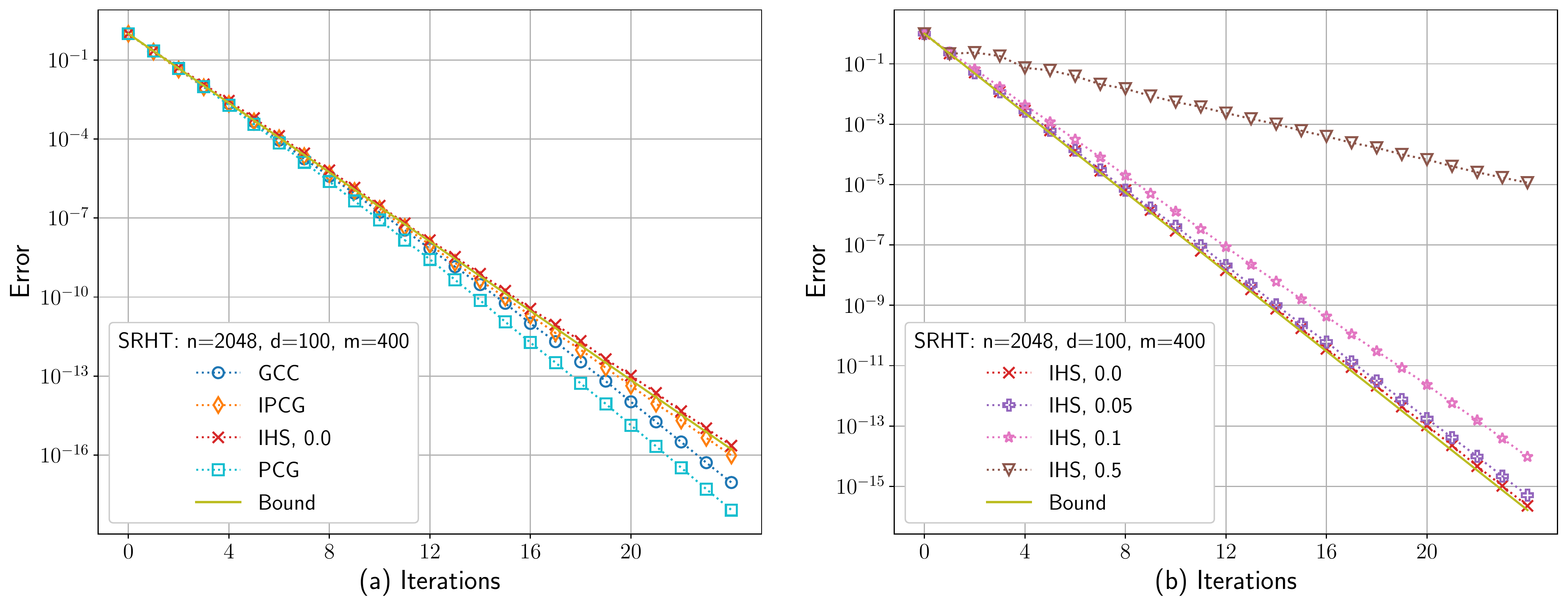}
	\caption{Comparison of the relative error with the SRHT.}
	\label{figurecomparisonsrht}
\end{figure}

\section{Optimized Sketch Size and Computational Complexity}

We say that an approximate solution $\wtilde x$ is $(\varepsilon,\delta)$-accurate if its relative error satisfies $\delta_t / \delta_0 \less \varepsilon$ for any $t \gre 0$ with probability $1-\delta$. Recall from Table~\ref{tableembeddingproperties} that we define the critical sketch size $m_\delta$ of an embedding $S$ as $m_\delta = \inf\{k \mid \mathbb{P}(\mathcal{E}_\rho^m) \gre 1-\delta\,,\forall m \gre k/\rho\,,\,\,\forall \rho \in (0,1)\}$. Our analysis and numerical experiments so far suggest that PCG yields the best performance among the different methods we have considered. According to~\eqref{eqnupperboundpcgrate}, the number of iterations $T$ required for PCG to reach an $(\varepsilon,\delta)$-accurate solution is given by $T = \lceil\log(4/\varepsilon) / \log(1/\rho)\rceil$, provided that $m \gre m_\delta / \rho$. Each iteration of PCG involves matrix-vector multiplications with $A$ and $A^\top$ with cost $\mathcal{O}(nd)$, as well as solving a linear system involving $H_S$. Given a precomputed matrix factorization of $H_S$ (e.g. QR decomposition of $SA$ or Cholesky decomposition of $H_S$), the cost of solving this linear system is $\mathcal{O}(d^2)$, which is negligible compared to $\mathcal{O}(nd)$. For $m \gre d$, the cost of the matrix decomposition is $\mathcal{O}(md^2)$. Denoting by $\mathcal{C}_{S,n,d}(m)$ the cost of forming the sketch $S \cdot A$, we obtain the total complexity $\mathcal{C}(m) = \mathcal{C}_{S,n,d}(m) + \mathcal{O}(md^2) + \mathcal{O}\!\big(nd \frac{\log(1/\varepsilon)}{\log(m/m_\delta)}\big)$, which we aim to minimize over $m > m_\delta$. If the factorization cost $\mathcal{O}(m d^2)$ dominates uniformly over $m > m_\delta$ the sketch and iteration costs, then the optimal solution is $m = \mathcal{O}(m_\delta)$. In order to avoid this trivial case for the embeddings we consider in this work, we assume that the problem is highly overparameterized, that is, $n > d^2$.

Our optimized sketch size will essentially improve the logarithmic terms involved in the total cost, and these are sensitive to numerical constants hidden in the asymptotic notations. Nonetheless, we choose to discard them in our analysis. This will considerably simplify the discussion as these numerical constants may depend on specific implementations choices, e.g., factorization of $H_S$, structured matrix-vector multiplications, as well as hardware specifications. Despite this choice, we will see that our analysis is fairly consistent with our empirical observations. As for future work, it may be of great practical interest to extend the present approach to a hardware specific setting, in order to optimize the total CPU-time of PCG as opposed to the flops count in an idealized RAM model. 

To our knowledge, we are the first to investigate formally the optimal sketch size for solving linear systems with sketching-based preconditioned iterative solvers. The authors of~\cite{meng2014lsrn} provide a short empirical investigation on the effect of the oversampling factor $\gamma = m/m_\delta$ on the overall running time for values $\gamma=\mathcal{O}(1)$. They do observe sensitive variations and an optimal trade-off between the different costs, and they prescribe then to use the fixed value $\gamma=2$. Similar choices are prescribed in~\cite{rokhlin2008fast, avron2010blendenpik} based on empirical observations.

\subsection{Optimized Complexity for the SRHT}
\label{sectionoptimalcomplexitysrht}

For $\delta = \mathcal{O}(1/d)$, the critical sketch size for the SRHT (see Table~\ref{tableembeddingproperties}) is $m_\delta \asymp d \log d$, and the sketching cost is $\mathcal{C}_{S,n,d}(m) = nd \log m$. We thus aim to minimize the cost function $\mathcal{C}(m) = nd \log m + md^2 + nd \frac{\log(1/\varepsilon)}{\log(m/(d\log d))}$. The next result provides an optimized sketch size and cost. We leave the proof to the reader, as it involves plugging-in the claimed value for the optimal sketch size into $\mathcal{C}(m)$ and then simple calculations.
\begin{theorem}
\label{theoremoptimalcomplexitysrht}
Given $\varepsilon > 0$ and assuming that $n > d^2$, we have the following total computational cost of PCG with a fixed SRHT to reach an $(\varepsilon, \mathcal{O}(1/d))$-accurate solution.
\begin{enumerate}[(i)]
    \item Suppose that $\sqrt{\log(1/\varepsilon)} < \log(\frac{n}{d^2})$. Then, for $m^* \asymp e^{\sqrt{\log(1/\varepsilon)}} d \log d$, we have the optimized complexity
    \begin{align}
        \mathcal{C}(m^*) = \mathcal{O}\!\left(nd \, (\log d + \sqrt{\log(1/\varepsilon)})\right)\,.
    \end{align}
    \item Suppose that $\sqrt{\log(1/\varepsilon)} \gre \log(\frac{n}{d^2})$. Then, for $m^* \asymp \frac{n}{d} \cdot\max\{\log d, \frac{\log(1/\varepsilon)}{\log(n/d^2)}\}$, we have the optimized complexity
    \begin{align}
        \mathcal{C}(m^*) = \mathcal{O}\!\left(nd \, \big( \log d + \frac{\log(1/\varepsilon)}{\log(n/d^2)}\big) \right)\,.
\end{align}
\end{enumerate}
\end{theorem}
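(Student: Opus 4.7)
The plan is to carry out the direct verification sketched by the authors: substitute the prescribed $m^*$ into the three-term cost
\[
\mathcal{C}(m) \asymp nd \log m + md^2 + \frac{nd \log(1/\varepsilon)}{\log(m/(d\log d))}
\]
(using $\mathcal{C}_{S,n,d}(m) \asymp nd\log m$ for the SRHT and $m_\delta \asymp d\log d$), and check each of the three terms against the advertised bound in turn. Throughout I abbreviate $E \defn \log(1/\varepsilon)$ and $K \defn \log(n/d^2)$, noting that the standing hypothesis $n > d^2$ gives $K > 0$ together with the key identity $e^K = n/d^2$ that will drive both case analyses.

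\textbf{Case (i).} With $\sqrt{E} < K$ and $m^* \asymp e^{\sqrt{E}} d\log d$, I read off $\log m^* = \mathcal{O}(\log d + \sqrt{E})$ and $\log(m^*/(d\log d)) = \sqrt{E}$, which immediately give sketching cost $\mathcal{O}(nd(\log d + \sqrt{E}))$ and iteration cost $ndE/\sqrt{E} = nd\sqrt{E}$. The only nontrivial step is the factorization term $m^*d^2 \asymp (\log d)\,d^3 e^{\sqrt{E}}$: invoking the hypothesis $\sqrt{E} < K$ gives $e^{\sqrt{E}} < e^K = n/d^2$, so $m^*d^2 = \mathcal{O}(nd\log d)$. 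Summing the three terms yields the claimed cost $\mathcal{O}(nd(\log d + \sqrt{E}))$.

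\textbf{Case (ii).} With $\sqrt{E} \gre K$ (hence $E/K \gre K$) and $m^* \asymp (n/d)\, M$ where $M \defn \max\{\log d,\, E/K\}$, I use $\log(n/d) = \log d + K$ to obtain $\log m^* = \log d + K + \log M = \mathcal{O}(\log d + E/K)$, since $K \less E/K$ and $\log M$ is subdominant. The factorization term is $m^* d^2 = ndM \less nd(\log d + E/K)$ by definition of $M$. For the iteration term, $\log(m^*/(d\log d)) = K + \log(M/\log d) \gre K$ (the correction vanishes when $M = \log d$ and is non-negative when $M = E/K \gre \log d$), so the iteration term is $\mathcal{O}(ndE/K)$. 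Summing yields the claimed cost $\mathcal{O}(nd(\log d + E/K))$.

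The conceptual justification for the case split at $\sqrt{E} = K$ becomes transparent under the substitution $L \defn \log(m/(d\log d))$, which recasts the cost as $nd(L + \log d) + (\log d)\, d^3 e^L + ndE/L$: balancing the $L$ and $E/L$ pieces yields the unconstrained optimum $L^* = \sqrt{E}$, and this is admissible exactly when the exponential factorization term stays under control, i.e., $L \lesssim K$. Once $\sqrt{E}$ exceeds $K$, $L$ must be capped at $\Theta(K)$, inflating the iteration cost from $nd\sqrt{E}$ to $ndE/K$, while the $\max\{\log d, \cdot\}$ floor in the prescription for $m^*$ ensures $m^* > m_\delta$ in the regime $E/K < \log d$. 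The principal bookkeeping obstacle---and the one subtlety worth flagging---is controlling the logarithmic slack ($\log\log d$ in case (i), $\log M$ in case (ii)) in the sketching term so that it is absorbed into the dominant $\log d$ or $E/K$ piece; everything else is routine algebra.
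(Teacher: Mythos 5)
Your proposal is correct and takes exactly the route the paper prescribes: the paper explicitly leaves the proof as a substitution of the claimed $m^*$ into $\mathcal{C}(m) = nd\log m + md^2 + nd\,\log(1/\varepsilon)/\log(m/(d\log d))$ followed by routine bounds, and your term-by-term verification in each of the two regimes (including the checks $e^{\sqrt{E}} \leq n/d^2$ in case (i) and $\log(m^*/(d\log d)) \geq K$ in case (ii)) carries this out faithfully. The closing paragraph explaining the $L = \log(m/(d\log d))$ reparameterization and the threshold $\sqrt{E} \asymp K$ is a helpful gloss on why the case split occurs, though it plays no logical role in the verification itself.
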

Our optimized sketch size $m^*$ clearly improves on the classical choice $m_\text{cl} = \mathcal{O}(m_\delta)$ prescribed in the literature (e.g.~\cite{rokhlin2008fast, meng2014lsrn, avron2010blendenpik})  which yields the total cost $\mathcal{C}_\text{cl} = \mathcal{O}(nd (\log d + \log(1/\varepsilon)))$. For instance with $n=10^7$, $d=50$ and the standard statistical precision $\varepsilon=d/n$, we obtain (after discarding numerical constants within asymptotic notations) that $\frac{\mathcal{C}(m^*)}{\mathcal{C}_\text{cl}} \approx 0.46$.

\subsection{Optimized Complexity for Gaussian Embeddings}
\label{SectionTradeOffGaussian}

For $\delta = e^{-\mathcal{O}(d)}$, the critical sketch size for Gaussian embeddings (see Table~\ref{tableembeddingproperties}) is $m_\delta \asymp d$. We thus aim to minimize the cost function $\mathcal{C}(m) = \mathcal{C}_{S,n,d}(m) + md^2 + nd \frac{\log(1/\varepsilon)}{\log(m/d)}$. The worst-case sketching cost is $\mathcal{O}(ndm)$ which is then the bottleneck for any sketch size $m \gre d$, and it is in fact as large as the cost of a direct method to solve~\eqref{eqnconvexquadratic} exactly. As argued in~\cite{meng2014lsrn} (see Section 4.4 therein), the sketching time of Gaussian embeddings is usually much smaller in practice; in contrast to the SRHT, sketching is easily parallelized over the rows of $S$. For the sake of simplicity, we make the idealized assumption that the sketch $S \cdot A$ can be formed in time $\mathcal{O}(nd)$ through parallelization and we ignore the communication costs among threads. The sketching cost $C_{S,n,d}(m) = nd$ is then negligible compared to the other terms and we have $\mathcal{C}(m) = md^2 + nd \frac{\log(1/\varepsilon)}{\log(m/d)}$.

Given a real positive number $a$, we recall that the Lambert function $W(a)$ is defined as the unique real solution to the equation $W(a)e^{W(a)} = a$, and we have the bound $\exp(W(a)) \less \frac{2a+1}{1+\log(a+1)}$ (see, e.g., Theorem 2.3 in~\cite{hoorfar2008inequalities}). The cost function $\mathcal{C}(m)$ can be written in terms of the ratio $\alpha = m/d$ as $\alpha \mapsto d^3 (\alpha + \frac{n/d^2 \log(1/\varepsilon)}{\log \alpha})$ whose minimizer $\alpha^*$ satisfies the equation $\alpha^* \log \alpha^* = \frac{n}{d^2} \log(1/\varepsilon)$. Hence, we have $\alpha^* = \exp(W(\frac{n}{d^2} \log(1/\varepsilon)))$, i.e., $m^* = d \cdot \exp(W(\frac{n}{d^2} \log(1/\varepsilon)))$ and $\mathcal{C}(m^*) = 2d^3 \cdot \exp(W(\frac{n}{d^2} \log(1/\varepsilon)))$. Using the aforementioned bound on the Lambert function, we immediately obtain the following result.
\begin{theorem}
\label{theoremoptimalcomplexitygaussian}
Given $\varepsilon > 0$ and assuming that $n > d^2$, the total computational cost of PCG with a fixed Gaussian embedding to reach an $(\varepsilon, e^{-\mathcal{O}(d)})$-accurate solution with $m^* = d \cdot \exp(W_0(\frac{n}{d^2} \log(1/\varepsilon)))$ satisfies
\begin{align}
\label{eqnoptimizedcostgaussian}
    \mathcal{C}(m^*) = \mathcal{O}\!\left(nd \cdot \frac{\log(1/\varepsilon)}{\log(n/d^2)}\right)\,.
\end{align}
\end{theorem}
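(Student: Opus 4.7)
The plan is to substitute the prescribed sketch size $m^* = d \cdot \exp(W(K))$, where $K \defn (n/d^2)\log(1/\varepsilon)$, directly into the cost function $\mathcal{C}(m) = md^2 + nd\,\log(1/\varepsilon)/\log(m/d)$ derived just before the theorem statement, and simplify using elementary bounds on the Lambert $W$ function. First, I would change variables to $\alpha = m/d$ so that $\mathcal{C}(\alpha) = d^3 \alpha + d^3 K/\log \alpha$, which cleanly isolates the linear factorization/preprocessing cost against the iteration cost decreasing as $1/\log \alpha$; the hypothesis $n > d^2$ ensures $K$ is large enough that an interior minimizer lies in $\alpha > 1$.

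Next, I would evaluate $\mathcal{C}$ at $\alpha^* = e^{W(K)}$. By the defining identity $W(K)\,e^{W(K)} = K$ we have $K/\log \alpha^* = K/W(K) = e^{W(K)}$, so the two competing terms are balanced and $\mathcal{C}(m^*) = 2 d^3 e^{W(K)}$. I would then invoke the standard inequality $e^{W(a)} \less (2a+1)/(1+\log(a+1))$ (Theorem~2.3 of~\cite{hoorfar2008inequalities}) with $a = K$, which yields $\mathcal{C}(m^*) = \mathcal{O}(d^3 K/\log K) = \mathcal{O}(nd\,\log(1/\varepsilon)/\log K)$. Because $K = (n/d^2)\log(1/\varepsilon)$ with $n/d^2 > 1$, we have $\log K = \log(n/d^2) + \log\log(1/\varepsilon) = \Theta(\log(n/d^2))$ in the regime of interest, giving the advertised complexity $\mathcal{O}(nd\,\log(1/\varepsilon)/\log(n/d^2))$.

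The calculus is essentially routine once the Lambert-function parameterization is adopted; the main subtlety is the logarithmic bookkeeping in the final asymptotic simplification, namely arguing rigorously that $\log\log(1/\varepsilon)$ is dominated by $\log(n/d^2)$ for the relevant range of $\varepsilon$ (outside of this range the problem is under-parameterized for the target precision and a direct factorization would already be competitive). A secondary but necessary check is that $m^* \gre m_\delta = \Theta(d)$ so that the PCG error guarantee~\eqref{eqnupperboundpcgrate} underlying the iteration count $T = \mathcal{O}(\log(1/\varepsilon)/\log(m/d))$ actually applies; this reduces to verifying $e^{W(K)} \gre 1$, which holds for every $K \gre 0$. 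Finally, one should confirm that the idealized sketching contribution $\mathcal{O}(nd)$ and the factorization contribution $m^* d^2$ are both dominated by the iteration cost at $\alpha^*$, which is immediate from the balancing identity $\alpha^* = K/\log\alpha^*$ obtained above.
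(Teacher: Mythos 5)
Your proof is correct and follows essentially the same route as the paper: reparameterize by $\alpha = m/d$, observe the first-order condition $\alpha^*\log\alpha^* = (n/d^2)\log(1/\varepsilon)$ so that $\alpha^* = e^{W(K)}$ balances the two terms giving $\mathcal{C}(m^*) = 2d^3 e^{W(K)}$, and then apply the Hoorfar--Hassani bound on $e^{W}$. The one minor point worth noting is that the final simplification is cleaner than you suggest: the $\mathcal{O}$-bound only requires $\log K \gre \log(n/d^2)$, which is immediate for $\varepsilon \less 1/e$, so no discussion of whether $\log\log(1/\varepsilon)$ is dominated is actually needed.
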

In contrast, the classical choice $m=\mathcal{O}(d)$ yields the total cost $\mathcal{C}_\text{cl} = \mathcal{O}(nd \log(1/\varepsilon))$. For instance, with $n=10^7$, $d=50$ and the standard statistical precision $\varepsilon=d/n$, we obtain (after discarding numerical constants within asymptotic notations) that $\frac{\mathcal{C}(m^*)}{\mathcal{C}_\text{cl}} \approx 0.12$.

\subsection{Numerical Evaluation of Optimized Sketch Sizes} On Figures \ref{figureoptimizedcostsrht} and \ref{figureoptimizedcostgaussian}, we show numerically that our theoretical prescriptions predict fairly well the empirical behavior. Our prescribed sketch size provides significant speed-ups over the standard choice $m=\mathcal{O}(m_\delta)$ in spite of using the rough proxy $\mathcal{C}(m)$ for the empirical computational time which does not account for hardware specifications and potentially complex interactions between the different computational steps of PCG.
\begin{figure}[h!]
	\centering
	\includegraphics[width=\textwidth]{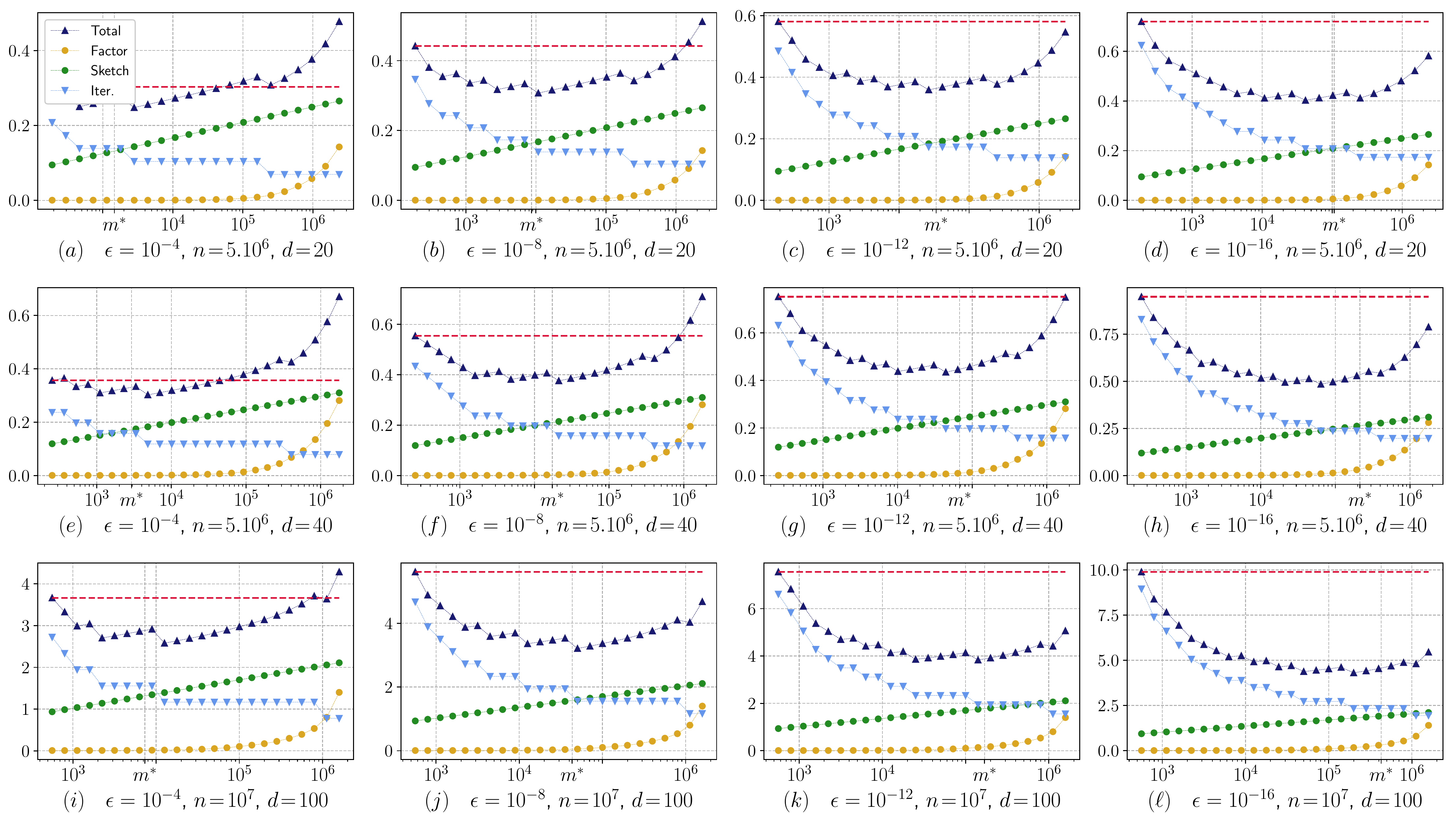}
	\caption{Sketching, factor and total iterations times (in seconds) of PCG with the SRHT to reach a $(\varepsilon, \mathcal{O}(1/d)$)-accurate solution, versus sketch size $m$. We use varying precisions $\varepsilon \in \{10^{-4}, 10^{-8}, 10^{-12}, 10^{-16}\}$ and varying dimensions $n$ and $d$ reported on each plot. Our prescribed sketch size $m^*$ is shown on the x-axis. The (red) dashed line shows the computational time of PCG with the classical prescription $m=4 d \log d$.}
	\label{figureoptimizedcostsrht}
\end{figure}
\begin{figure}[h!]
	\centering
	\includegraphics[width=\textwidth]{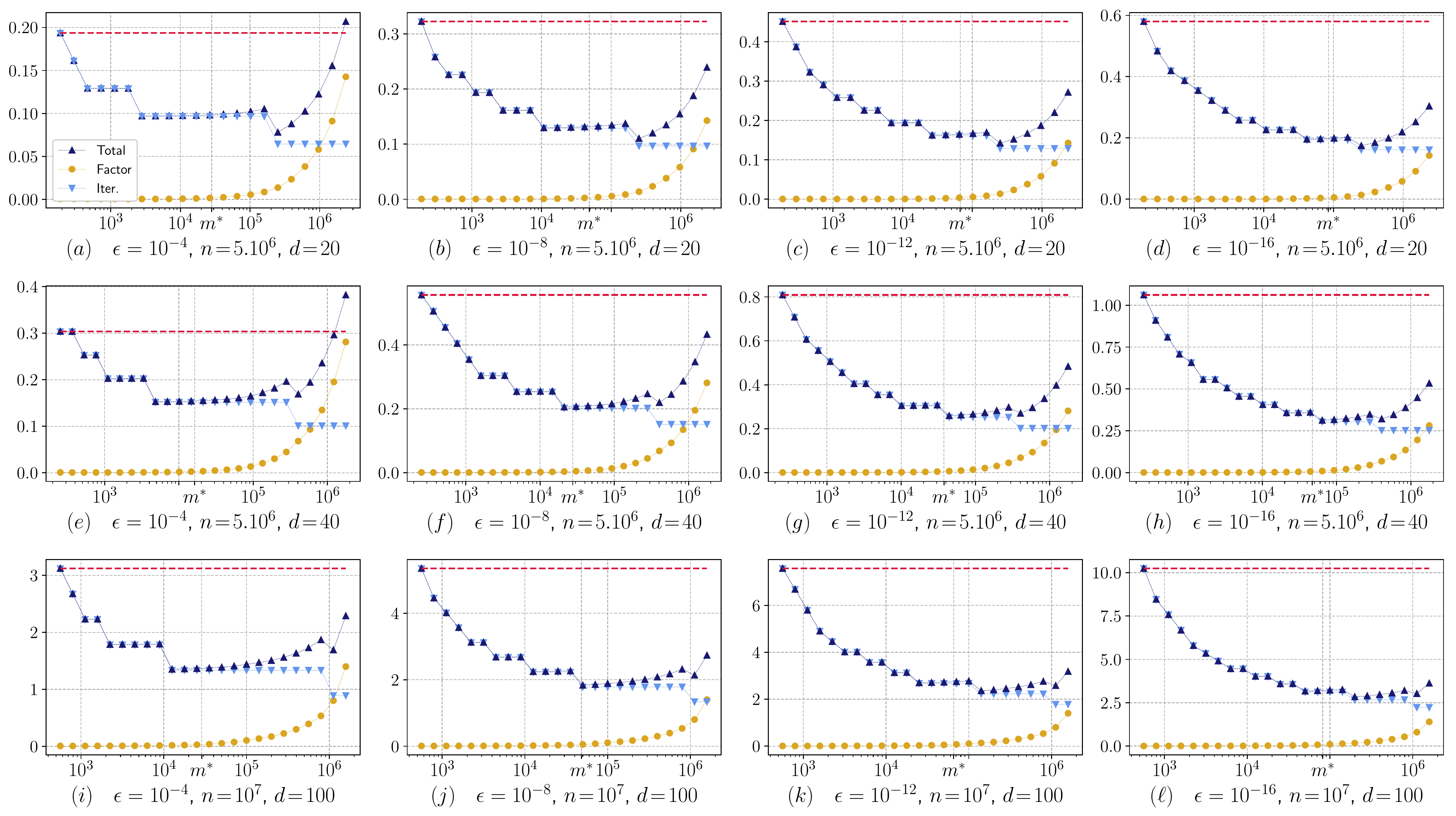}
	\caption{Factor and total iterations times (in seconds) of PCG a with Gaussian embedding to reach a $(\varepsilon, e^{-\mathcal{O}(d)})$-accurate solution, versus sketch size $m$. We use varying precisions $\varepsilon \in \{10^{-4}, 10^{-8}, 10^{-12}, 10^{-16}\}$ and varying dimensions $n$ and $d$ reported on each plot. Our prescribed sketch size $m^*$ is shown on the x-axis. The (red) dashed line shows the computational time of PCG with the classical prescription $m=4d$.}
	\label{figureoptimizedcostgaussian}
\end{figure}

\section{Proof of Theorem~\ref{theoremminimalrootradius}}
\label{sectionminimalrootradius}

We fix several notations. Recall that we defined $\eta = (1+\beta)^2 - 2\mu \theta_1 (1+\beta) + \mu^2 \theta_2$ and $\gamma = 1 + \beta - \mu \theta_1$. We denote $\eta_0 = 1 - 2\mu \theta_1 + \mu^2 \theta_2$ and $\gamma_0 = 1-\mu \theta_1$, so that $\eta = \eta_0 + 2 \gamma_0 \beta + \beta^2$ and $\gamma = \gamma_0 + \beta$. We introduce the coefficients $a_0 = -\beta^3$, $a_1 = \beta \left(\beta^2 - (1-2\gamma_0)\beta + 2 \gamma_0^2 - \eta_0 \right)$ and $a_2 = -\beta^2 + (1-2 \gamma_0)\beta - \eta_0$. A straightforward calculation yields that $\chi_{\mu,\beta}(\lambda) = \lambda^3 + a_2 \lambda^2 + a_1 \lambda + a_0$. We will also work with the variable $\alpha = \frac{\theta_2}{\theta_1} \mu$ and the reparameterized characteristic polynomial $\chi_{\alpha,\beta}(\lambda)$. We fix the notations $\mu^* = \theta_1/\theta_2$, $\alpha^* = 1$, $\beta^* = 0$ and $\rho^* = 1-\frac{\theta_1^2}{\theta_2}$

We introduce several intermediate results before developing the proof.
\begin{lemma}
\label{lemmaredundancy}
Suppose that for some parameters $\widehat{\beta} > 0$ and $\widehat{\alpha} \geq 0$, the root radius $\mathrm{\Lambda}(\widehat{\alpha}, \widehat{\beta})$ satisfies $\mathrm{\Lambda}(\widehat{\alpha}, \widehat{\beta}) < \rho^*$. Then, there exist parameters $\beta^\prime \in (0, \rho^*)$ and $\alpha^\prime \geq 0$, such that $\chi_{\alpha^\prime,\beta^\prime}(\rho^*) = 0$.
\end{lemma}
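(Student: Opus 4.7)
The plan is to reduce the conclusion to an application of the intermediate value theorem on the continuous scalar map $F(\alpha, \beta) \defn \chi_{\alpha, \beta}(\rho^*)$. First, I show that $\widehat{\beta}$ is automatically smaller than $\rho^*$, which is reassuring because the desired $\beta'$ must lie in $(0, \rho^*)$. By Vieta's formulas for the cubic $\chi_{\widehat{\alpha}, \widehat{\beta}}(\lambda) = \lambda^3 + a_2 \lambda^2 + a_1 \lambda + a_0$, the product of its three (possibly complex) roots equals $-a_0 = \widehat{\beta}^3$. Since each root has modulus at most $\mathrm{\Lambda}(\widehat{\alpha}, \widehat{\beta}) < \rho^*$, this forces $\widehat{\beta} < \rho^*$, so the vertical segment $\{\widehat{\alpha}\} \times [0, \widehat{\beta}]$ already lies inside the admissible range for $\beta'$.

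Next I compute the sign of $F$ at the two endpoints of that segment. At the top, factoring $\chi_{\widehat{\alpha}, \widehat{\beta}}(\rho^*) = \prod_i (\rho^* - r_i)$, each real root contributes a strictly positive factor (since $|r_i| < \rho^*$ and $\rho^* > 0$), while any complex conjugate pair $r, \bar{r}$ contributes $|\rho^* - r|^2 > 0$, so $F(\widehat{\alpha}, \widehat{\beta}) > 0$. At $\beta = 0$, the polynomial collapses to $\chi_{\alpha, 0}(\lambda) = \lambda^2(\lambda - \eta_0)$, hence $F(\alpha, 0) = \rho^{*2}(\rho^* - \eta_0)$. Substituting $\mu = \alpha\,\theta_1/\theta_2$ and $1 - \rho^* = \theta_1^2/\theta_2$ and simplifying yields $\rho^* - \eta_0 = -(1-\rho^*)(\alpha - 1)^2$, so $F(\alpha, 0) \less 0$ with equality only at $\alpha = 1$.

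If $\widehat{\alpha} \neq 1$, then $F(\widehat{\alpha}, 0) < 0 < F(\widehat{\alpha}, \widehat{\beta})$, and the intermediate value theorem applied to $\beta \mapsto F(\widehat{\alpha}, \beta)$ on $[0, \widehat{\beta}]$ yields some $\beta' \in (0, \widehat{\beta}) \subset (0, \rho^*)$ with $F(\widehat{\alpha}, \beta') = 0$; setting $\alpha' = \widehat{\alpha}$ closes this case. The one delicate case is $\widehat{\alpha} = 1$, where $F(1, 0) = 0$ and the naive vertical IVT is inconclusive. I handle it by perturbing horizontally to $\alpha' = 1 + \varepsilon$ for $\varepsilon > 0$ small enough that $F(\alpha', \widehat{\beta}) > 0$ persists (by continuity around the strictly positive value $F(1, \widehat{\beta})$); since $F(\alpha', 0) = -\rho^{*2}(1-\rho^*)\varepsilon^2 < 0$, a second IVT produces $\beta' \in (0, \widehat{\beta})$ with $F(\alpha', \beta') = 0$, i.e., $\chi_{\alpha', \beta'}(\rho^*) = 0$. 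The only real obstacle is this knife-edge case $\widehat{\alpha} = 1$, and it is resolved cheaply by the perturbation; the rest is purely algebraic, resting on the Vieta identity $r_1 r_2 r_3 = \widehat{\beta}^3$ and the explicit collapse of $\chi_{\alpha, 0}$.
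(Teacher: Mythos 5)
Your proof is correct and it takes a genuinely different and substantially more economical route than the paper's argument. The paper also opens with the same Vieta observation ($r_1 r_2 r_3 = -a_0 = \widehat\beta^3$, so $\widehat\beta < \rho^*$), but then embarks on a topological argument: it chooses a continuous injective path $(\alpha(t),\beta(t))$ from $(1,0)$ to $(\widehat\alpha,\widehat\beta)$, invokes the strict local-minimum result Lemma~\ref{lemmalocaloptimality} to establish that $\Lambda(t)$ initially rises above $\rho^*$, continuously parameterizes the three roots, and then carefully tracks how one of them must cross the circle $|\lambda|=\rho^*$ at the point $\rho^*$ rather than somewhere else, arguing by contradiction using conjugacy and the Vieta product bound when roots collide or become complex. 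Your proof replaces all of this with a single scalar intermediate value theorem applied to $F(\alpha,\beta) = \chi_{\alpha,\beta}(\rho^*)$: you observe that $\chi_{\widehat\alpha,\widehat\beta}(\rho^*) = \prod_i(\rho^*-r_i) > 0$ because $\rho^* > 0$ lies strictly outside all roots (real factors are positive, conjugate pairs contribute $|\rho^*-r|^2$), while $\chi_{\alpha,0}(\rho^*) = -\rho^{*2}(1-\rho^*)(\alpha-1)^2 \leq 0$, so a sign change along the vertical segment $\{\alpha\}\times[0,\widehat\beta]$ yields the desired root in $(0,\widehat\beta)\subset(0,\rho^*)$. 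The knife-edge case $\widehat\alpha = 1$, where $F(1,0)=0$, is correctly dispatched by a small horizontal perturbation. Your argument avoids continuous root parameterization entirely and does not need Lemma~\ref{lemmalocaloptimality}, so it is both shorter and conceptually lighter; it also makes transparent exactly why $\alpha=1$ is the only delicate point. I verified the key identity $\rho^*-\eta_0 = -(1-\rho^*)(\alpha-1)^2$ and the endpoint signs, and everything checks out.
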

We will need the following identity; we have for any $\alpha, \beta \gre 0$ that 
\begin{align}
\label{eqnpolynomialinbeta}
    \frac{\chi_{\alpha,\beta}(\rho^*)}{(1-\rho^*)} = P_\alpha(\beta)\,,
\end{align}
where $P_\alpha(\beta) = -\beta^3 + \rho^* (1-2 \alpha) \beta^2 + \rho^* (1-\alpha) (1+\alpha (2{\rho^*} - 1)) \beta - {\rho^*}^2 (1-\alpha)^2$.
\begin{lemma}
\label{lemmanorootwithin}
Suppose that $\rho^* \neq \frac{1}{2}$. Then, for any $\alpha \geq 0$, the polynomial $P_\alpha$ has no root within the interval $(0,\rho^*)$.
\end{lemma}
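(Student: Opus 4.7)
The plan is to reinterpret $P_\alpha(\beta)$, for each fixed $\beta \in (0,\rho^*)$, not as a cubic in $\beta$ but as a \emph{quadratic in $\alpha$}, and to show that this quadratic is strictly negative for all real $\alpha$. First I would regroup $P_\alpha(\beta)$ by powers of $\alpha$ and factor each coefficient, writing
\begin{align*}
    P_\alpha(\beta) = A_0(\beta) + \alpha\, A_1(\beta) + \alpha^2\, A_2(\beta)\,,
\end{align*}
with
\begin{align*}
    A_0(\beta) = -(\beta-\rho^*)(\beta^2-\rho^*)\,,\qquad A_1(\beta) = -2\rho^*(\beta-\rho^*)(\beta+1)\,,\qquad A_2(\beta) = -\rho^*\bigl[(2\rho^*-1)\beta + \rho^*\bigr]\,.
\end{align*}
A short case analysis on the sign of $2\rho^*-1$, together with $\beta < \rho^*$, shows that $(2\rho^*-1)\beta + \rho^* > 0$ on the interval, hence $A_2(\beta) < 0$; likewise $A_0(\beta) < 0$ since $\beta^2 < {\rho^*}^2 \less \rho^*$ (using $\rho^* < 1$). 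Thus $\alpha \mapsto P_\alpha(\beta)$ is a downward-opening quadratic.

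Since $A_2(\beta) < 0$, this quadratic is strictly negative on $\real$ if and only if its discriminant is. Factoring the positive quantity $4\rho^*(\rho^*-\beta)$ out of $A_1(\beta)^2 - 4 A_0(\beta) A_2(\beta)$ reduces the problem to proving that
\begin{align*}
    B(\beta) \defn \rho^*(\rho^*-\beta)(\beta+1)^2 + (\beta^2-\rho^*)\bigl[(2\rho^*-1)\beta + \rho^*\bigr] < 0 \qquad \text{on } (0,\rho^*)\,.
\end{align*}
I would then expand both summands and collect powers of $\beta$. The crucial identity is that the constant and linear (in $\beta$) coefficients cancel exactly, leaving the clean factorization
\begin{align*}
    B(\beta) = (\rho^*-1)\,\beta^2\,(\beta + \rho^*)\,,
\end{align*}
which is strictly negative on $(0,\rho^*)$ because $\rho^* < 1$. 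Combined with $4\rho^*(\rho^*-\beta) > 0$ this yields $A_1(\beta)^2 - 4 A_0(\beta) A_2(\beta) < 0$, hence $P_\alpha(\beta) < 0$ for every $\alpha \in \real$ and every $\beta \in (0,\rho^*)$; in particular, $P_\alpha$ has no root in $(0,\rho^*)$ for any $\alpha \gre 0$.

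The main obstacle is the algebraic identity for $B(\beta)$: each of the two summands is a cubic in $\beta$ whose coefficients are quadratics in $\rho^*$, and the compact factored form emerges only after the exact cancellation of the degree-$0$ and degree-$1$ coefficients. Everything else — the regrouping, the sign analysis of $A_0$ and $A_2$, and the final conclusion — is immediate once this identity is in hand. I note in passing that the argument above does not appear to use $\rho^* \neq \tfrac{1}{2}$; the hypothesis is presumably retained for consistency with other parts of the proof of Theorem~\ref{theoremminimalrootradius}.
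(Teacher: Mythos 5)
Your proof is correct, and it takes a genuinely different and considerably more elementary route than the paper's. I verified the regrouping $P_\alpha(\beta) = A_0(\beta) + \alpha A_1(\beta) + \alpha^2 A_2(\beta)$ and the factorizations $A_0 = -(\beta-\rho^*)(\beta^2-\rho^*)$, $A_1 = -2\rho^*(\beta-\rho^*)(\beta+1)$, $A_2 = -\rho^*[(2\rho^*-1)\beta+\rho^*]$, the sign analysis showing $A_0, A_2 < 0$ on $(0,\rho^*)$, and the key identity $B(\beta) = (\rho^*-1)\beta^2(\beta+\rho^*)$; all check out. The paper instead argues by contradiction: it assumes $P_\alpha$ has a root in $(0,\rho^*)$, uses auxiliary results (Lemmas~\ref{lemmarootwithin} and \ref{lemmadoubleroot}) to manufacture a parameter $\widehat\alpha$ at which $\beta_+(\widehat\alpha)$ (the larger root of the derivative $P'_{\widehat\alpha}$) is itself a root of $P_{\widehat\alpha}$ lying in $(0,\rho^*)$, and then shows (Lemma~\ref{lemmalocalmaximum}) that $P_\alpha(\beta_+(\alpha)) < 0$ whenever $\beta_+(\alpha) \in (0,\rho^*)$ — a contradiction. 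That argument in turn rests on several further helper lemmas (on the negative root, the asymptotics of $\beta_+$, and the range of $\alpha$ with $\beta_+(\alpha) \in (0,\rho^*)$), all of which require case splits on $\rho^* \gtrless \tfrac12$. Your approach — treating $P_\alpha(\beta)$ as a downward-opening quadratic in $\alpha$ and showing its discriminant is negative — replaces this entire chain with one algebraic identity, proves the stronger statement $P_\alpha(\beta) < 0$ for \emph{all} real $\alpha$ (not merely $\alpha \geq 0$), and, as you correctly observe, does not need $\rho^* \neq \tfrac12$ at all. This last point is a real bonus: the paper's proof of Theorem~\ref{theoremminimalrootradius} contains a separate continuity argument just to handle $\rho^* = \tfrac12$, which your version of the lemma would render unnecessary.
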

We are now ready to prove Theorem~\ref{theoremminimalrootradius}. We assume by contradiction that there exist parameters $\widehat{\beta} \geq 0$ and $\widehat{\alpha} \geq 0$ such that 
\begin{align}
\label{eqncontradictionassumption}
    \Lambda(\widehat{\alpha}, \widehat{\beta}) < \rho^*\,.
\end{align}
We must have $\widehat \beta > 0$; indeed, we have shown in Theorem~\ref{theoremexacterrorrefreshed} that $\inf_{\alpha \geq 0} \Lambda(\alpha, 0) = \rho^*$.

Suppose first that $\rho^* \neq \frac{1}{2}$. From Lemma~\ref{lemmaredundancy}, we know that there must exist some parameters $0 < \beta^\prime < \rho^*$ and $\alpha^\prime \geq 0$ such that $\chi_{\alpha^\prime, \beta^\prime}(\rho^*) = 0$. According to~\eqref{eqnpolynomialinbeta}, the latter implies that the polynomial $P_{\alpha^\prime}$ has a root at $\beta^\prime \in (0, \rho^*)$. But this is contradiction with Lemma~\ref{lemmanorootwithin}.

We turn to the case $\rho^*=\frac{1}{2}$. For any $\rho^* \in (0,1)$, we denote $\mathrm{\Lambda}(\alpha,\beta,\rho^*) \equiv \mathrm{\Lambda}(\alpha,\beta)$. Note that, as $\alpha \to +\infty$ or $\beta \to +\infty$, we have $\mathrm{\Lambda}(\alpha,\beta,\rho^*) \to +\infty$. Therefore, we can restrict the range of $(\alpha, \beta)$ to a rectangle $[0,R]^2$ such that for any $\rho^* \in (0,1)$, we have
\begin{align}
    \inf_{\alpha, \beta \gre 0} \mathrm{\Lambda}(\alpha, \beta, \rho^*) = \inf_{0 \less \alpha, \beta \less R} \mathrm{\Lambda}(\alpha, \beta, \rho^*)\,.
\end{align}
We introduce the functions 
\begin{align}
f(\alpha, \beta, \rho^*) = -\rho^* + \mathrm{\Lambda}(\alpha,\beta,\rho^*)\,,\quad \mbox{and}\quad F_R(\rho^*) = \inf_{0 \less \alpha, \beta \less R} f(\alpha, \beta, \rho^*)
\end{align}
The function $f$ is continuous. Partial minimization of a continuous function over a compact preserves continuity with respect to the other variables. Therefore, $F_R$ is continuous in $\rho^*$. Further, we know that for $\rho^*\neq 1/2$, $F(\rho^*) = 0$. Hence, $F = 0$ everywhere, which implies that $\inf_{0 \less \alpha, \beta \less R} \mathrm{\Lambda}\left(\alpha,\beta,\frac{1}{2}\right) = \frac{1}{2}$. This concludes the proof of Theorem~\ref{theoremminimalrootradius}. The rest of Section~\ref{sectionminimalrootradius} aims to prove Lemma~\ref{lemmaredundancy} and Lemma~\ref{lemmanorootwithin}

\subsection{Proof of Lemma~\ref{lemmaredundancy}}

We claim that $\widehat \beta < \rho^*$. This follows from the fact that for any $\alpha, \beta \gre 0$, we have $\Lambda(\alpha,\beta) \gre \beta$. Indeed, we have $\lambda_1 \lambda_2 \lambda_3 = \beta^3$, where $\lambda_1, \lambda_2, \lambda_3$ are the roots of $\chi_{\alpha,\beta}$. Thus, $|\lambda_1 \lambda_2 \lambda_3| = \beta^3$, which further implies that $\max(|\lambda_1|, |\lambda_2|, |\lambda_3|) \gre \beta$, i.e., $\mathrm{\Lambda}(\alpha,\beta) \geq \beta$. In particular, we have $\widehat{\beta} \leq \mathrm{\Lambda}(\widehat{\alpha}, \widehat{\beta})$. Along with the assumption~\eqref{eqncontradictionassumption}, it follows that 
\begin{align}
\label{EqnBoundHatBeta}
    \widehat{\beta} < \rho^*\,.
\end{align}
We know that $\Lambda(1,0) = \rho^*$. Let $(\alpha(t), \beta(t))_{t \in [0,1]}$ be a continuous, injective path in the rectangle $[1, \widehat{\alpha}] \times [0, \widehat{\beta}]$ such that $(\alpha(0), \beta(0)) = (1,0)$, $(\alpha(1), \beta(1)) = (\widehat{\alpha}, \widehat{\beta})$ and $\beta(t) > 0$ for $t > 0$. Using~\eqref{EqnBoundHatBeta}, we have  
\begin{align*}
\beta(t)\in (0,\rho^*)\,,\quad \text{for  } t \in (0,1]\,.
\end{align*}
Denote $\Lambda(t) = \Lambda(\alpha(t), \beta(t))$. We introduce continuous parameterizations of the roots $\lambda_1(t), \lambda_2(t), \lambda_3(t)$ of $\chi_{\alpha(t), \beta(t)}$. Then, it suffices to show that one of the roots $\lambda_1(t)$, $\lambda_2(t)$ or $\lambda_3(t)$ takes the value $\rho^*$ for some $t > 0$. Indeed, by setting $\alpha^\prime = \alpha(t)$ and $\beta^\prime = \beta(t)$, it will imply the claim, since $\beta(t) \in (0, \rho^*)$.

We have that $\Lambda(0) = \rho^*$, and, $\Lambda(1) = \Lambda(\what{\alpha}, \what{\beta}) < \rho^*$. By continuity of $\Lambda(t)$ and using the fact that $\Lambda(t)$ has a strict local minimum at $t=0$ (see Lemma~\ref{lemmalocaloptimality}), there must exist $t_0 > 0$ such that $\Lambda(t_0) = \rho^*$ and $\Lambda(t) > \rho^*$ for $t \in (0,t_0)$.

Without loss of generality, we choose an indexing of the roots such that $\lambda_1(0) = \rho^*$ and $\lambda_2(0) = \lambda_3(0) = 0$. For $t$ close to $0$, by continuity, the root $\lambda_1(t)$ is not the conjugate of $\lambda_2(t)$ and $\lambda_3(t)$. Therefore, for $t$ close to $0$, $\lambda_1(t)$ must be real, equal to $\mathrm{\Lambda}(t)$ and thus, strictly greater than $\rho^*$. Since $|\lambda_1(t_0)| \less \rho^*$, there must exist $t_1 \in (0, t_0]$ such that $|\lambda_1(t_1)| = \rho^*$. Either $\lambda_1(t_1) = \rho^*$, which concludes the proof. Or, $\lambda_1(t_1)$ is strictly complex or equal to $-\rho^*$. In both cases, by continuity, there must exist $t \in (0, t_0)$ such that $\lambda_1(t)$ is strictly complex. Denote by $t_2$ the infimum time at which $\lambda_1(t)$ becomes strictly complex. It holds that $t_2 > 0$, since $\lambda_1(0)$ has single multiplicity. For $t \in (0, t_2]$, the root $\lambda_1(t)$ is real. Either there exists $t \in (0,t_2]$ such that $\lambda_1(t) = \rho^*$, which concludes the proof. Or, $\lambda_1(t) > \rho^*$ for all $t \in (0, t_2]$. By conjugacy of the complex roots, we must have $\lambda_1(t_2) = \lambda_i(t_2) > \rho^*$ for some $i\in\{2,3\}$ (without loss of generality, say $i=2$). Since $\lambda_2(0)=0$, by continuity of the root, there must exist $t_3 \in (0,t_2)$ such that $|\lambda_2(t_3)| = \rho^*$.

Either the root $\lambda_2(t)$ crosses the (complex) circle of radius $\rho^*$ along the real axis, at the point $\rho^*$, which concludes the proof.

Or, the root $\lambda_2(t)$ crosses the circle of radius $\rho^*$ in the (strictly) complex plane or at $-\rho^*$, and then it hits the real axis $[\rho^*,+\infty)$. Denote $t_4$ the first time at which $\lambda_2(t)$ hits the real axis $[\rho^*, +\infty)$. Then, by conjugacy of $\lambda_2(t_4)$ and $\lambda_3(t_4)$ (since, right before $t_4$, $\lambda_1(t)$ is real and $\lambda_2(t)$ must be complex and hence, conjugate to $\lambda_3(t)$), we must have $\lambda_2(t_4) = \lambda_3(t_4)$. Hence, $\lambda_1(t_4) \lambda_2(t_4) \lambda_3(t_4) > {\rho^*}^3$, which yields that $\beta(t_4)^3 < {\rho^*}^3 < \lambda_1(t_4) \lambda_2(t_4) \lambda_3(t_4) = \beta(t_4)^3$. The latter set of inequalities yields a contradiction, and thus the claim.

\subsubsection{Intermediate Results for the Proof of Lemma~\ref{lemmaredundancy}}

\begin{lemma}[Local optimality]
\label{lemmalocaloptimality}
The root radius function $(\mu, \beta) \mapsto \Lambda(\mu, \beta)$ has a strict local minimum at $(\mu^*, \beta^*) = (\theta_2^{-1}\theta_1, 0)$ over $\real^2$.
\end{lemma}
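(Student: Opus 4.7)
The plan is to track the simple nonzero root of the characteristic polynomial $\chi_{\mu,\beta}$ via the implicit function theorem, and to show that its second-order Taylor expansion about $(\mu^*,0)$ is governed by a positive definite Hessian.

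First, I would identify the root structure at $(\mu^*,0)$. Using $\gamma_0 = 1-\mu\theta_1$ and $\eta_0 = 1-2\mu\theta_1+\mu^2\theta_2$, at $\mu=\mu^*=\theta_1/\theta_2$ one checks $\gamma_0 = \eta_0 = \rho^*$. Setting $\beta=0$ in the formulas for $a_0,a_1,a_2$ then gives $a_0=a_1=0$ and $a_2=-\rho^*$, so $\chi_{\mu^*,0}(\lambda) = \lambda^2(\lambda-\rho^*)$. Hence $\Lambda(\mu^*,0)=\rho^*$, and $\rho^*$ is a simple root with $\chi_\lambda(\rho^*)={\rho^*}^2>0$. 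The implicit function theorem therefore provides a smooth real branch $\lambda(\mu,\beta)$ defined on a neighborhood $\mathcal{V}$ of $(\mu^*,0)$ with $\lambda(\mu^*,0)=\rho^*$. Since $\Lambda \gre |\lambda|$ on $\mathcal{V}$, it suffices to prove $\lambda(\mu,\beta)>\rho^*$ on $\mathcal{V}\setminus\{(\mu^*,0)\}$.

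Next I would show that the gradient of $\lambda$ vanishes at $(\mu^*,0)$ via implicit differentiation. Because $\mu^*$ is the minimizer of $\eta_0$ so that $\partial_\mu \eta_0(\mu^*)=0$, direct evaluation gives $\chi_\mu|_{(\rho^*,\mu^*,0)}=0$, and the linear-in-$\beta$ contributions of $a_1$ and $a_2$ cancel exactly at $\lambda=\rho^*$ to give $\chi_\beta|_{(\rho^*,\mu^*,0)} = (1-2\rho^*){\rho^*}^2 + \rho^*(2\rho^*-1)\rho^* = 0$. Thus $\nabla\lambda(\mu^*,0)=0$, and second-order information is needed. The standard identity $\chi_\lambda \lambda_{xx}+\chi_{xx}=0$ (valid when $\lambda_x=0$) together with the analogue for mixed partials yields each entry of the Hessian as $-\chi_{xx}/\chi_\lambda$. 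A careful but routine computation of the $\chi_{xx}$ values produces
\[
H_\lambda(\mu^*,0) \;=\; \begin{pmatrix} 2\theta_2 & 2\theta_1 \\ 2\theta_1 & 2(1-\rho^*)/\rho^* \end{pmatrix}.
\]
Substituting the defining identity $\theta_2(1-\rho^*)=\theta_1^2$ simplifies the determinant to $4\theta_1^2(1-\rho^*)/\rho^*>0$, and the trace is trivially positive, so $H_\lambda\succ 0$. By Taylor expansion, $\lambda(\mu^*+u,v) = \rho^* + \tfrac12\langle(u,v), H_\lambda(u,v)\rangle + o(\|(u,v)\|^2)>\rho^*$ for $(u,v)\neq 0$ small enough, and therefore $\Lambda(\mu,\beta) \gre \lambda(\mu,\beta) > \rho^* = \Lambda(\mu^*,0)$.

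The main technical obstacle is the bookkeeping in the Hessian computation: it requires evaluating six second-order partials of $a_0,a_1,a_2$ and then combining them at $\lambda=\rho^*$. The key structural insight is that the vanishing of $\nabla\lambda$ reflects the local optimality analysis already carried out at $\beta=0$ in Theorem~\ref{theoremexacterrorrefreshed}, while the strict positive definiteness of $H_\lambda$ reduces cleanly to the algebraic identity $\theta_2(1-\rho^*)=\theta_1^2$ that defines $\rho^*$.
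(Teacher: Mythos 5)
Your proposal is correct and follows essentially the same route as the paper: identify $\rho^*$ as a simple root of $\chi_{\mu^*,0}$, invoke the implicit function theorem to get a smooth real branch $\lambda(\mu,\beta)$, show by implicit differentiation that its gradient vanishes at $(\mu^*,0)$, compute the Hessian (with entries $2\theta_2$, $2\theta_1$, $2(1-\rho^*)/\rho^*$), and verify positive definiteness via the identity $\theta_2(1-\rho^*)=\theta_1^2$. You make one helpful observation explicit that the paper leaves implicit, namely that $\Lambda \gre \lambda$ reduces local optimality of $\Lambda$ to local optimality of the tracked real root.
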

\begin{proof}
It is straightforward to check that the roots of $\chi_{\mu^*,\beta^*}$ are equal to $\rho^* \defn 1-\frac{\theta_1^2}{\theta_2}$ with single multiplicity and $0$ with double multiplicity. We denote by $\lambda_1 \equiv \lambda_1(\mu, \beta)$, $\lambda_2 \equiv \lambda_2(\mu, \beta)$ and $\lambda_3 \equiv \lambda_3(\mu, \beta)$ continuous parameterizations of the roots of $\chi_{\mu,\beta}$. Without loss of generality, we assume that $\lambda_1(\mu^*,\beta^*) = \rho^*$, $\lambda_2(\mu^*,\beta^*) = 0$, and, $\lambda_3(\mu^*,\beta^*) = 0$.

For $(\mu,\beta)$ in a neighborhood of $(\mu^*,\beta^*)$, by continuity of $(\lambda_1, \lambda_2, \lambda_3)$, we have that $\lambda_2$ and $\lambda_3$ are close to $0$ and $\lambda_1$ close to $\rho^*$, whence $\lambda_1$ is necessarily a real root of the polynomial $\chi_{\beta, \mu}$ with single multiplicity (otherwise, $\lambda_1$ would be the complex conjugate of $\lambda_2$ or $\lambda_3$, but $\lambda_1$ is far apart from $\lambda_2$ and $\lambda_3$). Since $\lambda_1$ is real and has single multiplicity, the function $\lambda_1$ is differentiable in a neighborhood of $(\mu^*, \beta^*)$.

Recall that we denote by $a_i$ the $i$-th order coefficient of the three-degree polynomial $\chi_{\mu,\beta}(\lambda)$ and that $a_3=1$. The function $\beta \mapsto \chi_{\mu, \beta}(\lambda_1(\mu, \beta))$ is constant and equal to $0$. Hence, we have for $(\mu, \beta)$ close to $(\mu^*,\beta^*)$ that
\begin{align}
    0 = \frac{\mathrm{d}\chi_{\mu, \beta}(\lambda_1)}{\mathrm{d}\beta} = \frac{\mathrm{d}\lambda_1}{\mathrm{d}\beta} \left(3 \lambda_1^2 + 2\lambda_1 a_2 + a_1\right) + \lambda_1^2 \frac{\mathrm{d}a_2}{\mathrm{d}\beta} + \lambda_1 \frac{\mathrm{d}a_1}{\mathrm{d}\beta} + \frac{\mathrm{d}a_0}{\mathrm{d}\beta} \label{EqnMainEqualityLocalOptimality}\,.
\end{align}
At $(\mu, \beta)=(\mu^*,\beta^*)$, we have $a_2 = -\eta_0$, $a_1 = 0$, $a_0 = 0$, $\frac{\mathrm{d}a_2}{\mathrm{d}\beta} = 1 - 2 \gamma_0$, $\frac{\mathrm{d}a_1}{\mathrm{d}\beta} = 2\gamma_0^2 - \eta_0$, $\frac{\mathrm{d}a_0}{\mathrm{d}\beta} = 0$, $\gamma_0 = \rho^*$ and $\eta_0 = \rho^*$. In particular, we have $3 {\rho^*}^2 + 2\rho^* a_2 + a_1 = {\rho^*}^2 \neq 0$. By continuity over a neighborhood of $(\mu^*,\beta^*)$, the term $3 \lambda_1^2 + 2\lambda_1 a_2 + a_1$ is non-zero, and, from~\eqref{EqnMainEqualityLocalOptimality}, we obtain $\frac{\mathrm{d}\lambda_1}{\mathrm{d}\beta} \!=\! - \frac{\lambda_1^2 \frac{\mathrm{d}a_2}{\mathrm{d}\beta} + \lambda_1 \frac{\mathrm{d}a_1}{\mathrm{d}\beta} + \frac{\mathrm{d}a_0}{\mathrm{d}\beta}}{3 \lambda_1^2 + 2\lambda_1 a_2 + a_1}$. In particular, it implies that $\lambda_1$ is infinitely differentiable with respect to $\beta$, around $\beta^*$. Evaluating the latter derivative at $(\mu^*,\beta^*)$, we find that $\frac{\mathrm{d}\lambda_1}{\mathrm{d}\beta}(\mu^*, \beta^*) \!=\! 0$.  

Using similar arguments, we obtain that around $(\mu^*,\beta^*)$,
\begin{align}
0 = \frac{\mathrm{d}\chi_{\mu,\beta}(\lambda_1)}{\mathrm{d}\mu}
= \frac{\mathrm{d}\lambda_1}{\mathrm{d}\mu} \left(3 \lambda_1^2 + 2\lambda_1 a_2 + a_1\right) + \lambda_1^2 \frac{\mathrm{d}a_2}{\mathrm{d}\mu} + \lambda_1 \frac{\mathrm{d}a_1}{\mathrm{d}\mu} + \frac{\mathrm{d}a_0}{\mathrm{d}\mu}\label{EqnMainEquality2LocalOptimality}\,,
\end{align}
whence $\frac{\mathrm{d}\lambda_1}{\mathrm{d}\mu} = - \frac{\lambda_1^2 \frac{\mathrm{d}a_2}{\mathrm{d}\mu} + \lambda_1 \frac{\mathrm{d}a_1}{\mathrm{d}\mu} + \frac{\mathrm{d}a_0}{\mathrm{d}\mu}}{3 \lambda_1^2 + 2\lambda_1 a_2 + a_1}$. At $(\mu^*,\beta^*)$, we have $\frac{\mathrm{d}a_2}{\mathrm{d}\mu} \!=\! \frac{\mathrm{d}a_1}{\mathrm{d}\mu} \!=\! \frac{\mathrm{d}a_0}{\mathrm{d}\mu} \!=\! 0$, and thus, $\frac{\mathrm{d}\lambda_1}{d\mu}(\mu^*,\beta^*) \!=\! 0$.

Differentiating again~\eqref{EqnMainEqualityLocalOptimality} with respect to $\beta$, evaluating at $(\mu^*,\beta^*)$ and using the fact that $\frac{\mathrm{d}\lambda_1}{\mathrm{d}\beta}(\mu^*,\beta^*) \!=\! 0$, we get
\begin{align*}
0 = \frac{\mathrm{d}^2\lambda_1}{\mathrm{d}\beta^2} \left(3 {\rho^*}^2 + 2\rho^* a_2 + a_1\right) + {\rho^*}^2 \frac{\mathrm{d}^2a_2}{\mathrm{d}\beta^2} + \rho^* \frac{\mathrm{d}^2a_1}{\mathrm{d}\beta^2} + \frac{\mathrm{d}^2a_0}{\mathrm{d}\beta^2}\,,
\end{align*}
and consequently, $\frac{\mathrm{d}^2\lambda_1}{\mathrm{d}\beta^2} = -\frac{{\rho^*}^2 \frac{\mathrm{d}^2a_2}{\mathrm{d}\beta^2} + \rho^* \frac{\mathrm{d}^2 a_1}{\mathrm{d}\beta^2} + \frac{\mathrm{d}^2a_0}{\mathrm{d}\beta^2}}{3 {\rho^*}^2 + 2\rho^* a_2 + a_1}$. At $(\mu^*, \beta^*)$, we find $\frac{\mathrm{d}^2a_0}{\mathrm{d}\beta^2} = 0$, $\frac{\mathrm{d}^2a_1}{\mathrm{d}\beta^2} = 4 \rho^* - 2$ and $\frac{\mathrm{d}^2a_2}{\mathrm{d}\beta^2} = -2$. Hence, we obtain $\frac{\mathrm{d}^2\lambda_1}{\mathrm{d}\beta^2}(\mu^*,\beta^*) = 2\,\frac{1-\rho^*}{\rho^*}$. Differentiating~\eqref{EqnMainEqualityLocalOptimality} with respect to $\mu$, evaluating at $(\mu^*,\beta^*)$ and using the fact that $\frac{\mathrm{d}\lambda_1}{\mathrm{d}\mu}(\mu^*,\beta^*) \!=\! 0$, we get 
\begin{align*}
0 = \frac{\mathrm{d}^2\lambda_1}{\mathrm{d}\beta \mathrm{d}\mu} \left(3 {\rho^*}^2 + 2\rho^* a_2 + a_1\right) + {\rho^*}^2 \frac{\mathrm{d}^2a_2}{\mathrm{d}\beta \mathrm{d}\mu} + \rho^* \frac{\mathrm{d}^2a_1}{\mathrm{d}\beta \mathrm{d}\mu} + \frac{\mathrm{d}^2 a_0}{\mathrm{d}\beta \mathrm{d}\mu}\,,
\end{align*}
At $(\mu^*,\beta^*)$, we have $\frac{\mathrm{d}^2a_2}{\mathrm{d}\beta \mathrm{d}\mu} = 2\theta_1$, $\frac{\mathrm{d}^2a_1}{\mathrm{d}\beta \mathrm{d}\mu} = -4\theta_1 \rho^*$ and $\frac{\mathrm{d}^2a_0}{\mathrm{d}\beta \mathrm{d}\mu} = 0$, and consequently, $\frac{\mathrm{d}^2\lambda_1}{\mathrm{d}\beta \mathrm{d}\mu}(\mu^*,\beta^*) = 2 \theta_1$. 

Differentiating~\eqref{EqnMainEquality2LocalOptimality} with respect to $\mu$, evaluating at $(\mu^*,\beta^*)$ and using the fact that $\frac{\mathrm{d}\lambda_1}{d\mu}(\mu^*,\beta^*) = 0$, we get
\begin{align*}
0 = \frac{\mathrm{d}^2\lambda_1}{\mathrm{d}\mu^2} \left(3 {\rho^*}^2 + 2\rho^* a_2 + a_1\right) + {\rho^*}^2 \frac{\mathrm{d}^2a_2}{\mathrm{d}\mu^2} + \rho^* \frac{\mathrm{d}^2a_1}{\mathrm{d}\mu^2} + \frac{\mathrm{d}^2a_0}{\mathrm{d}\mu^2}\,.
\end{align*}
At $(\mu^*,\beta^*)$, we find $\frac{\mathrm{d}^2a_0}{\mathrm{d}\mu^2} = 0$, $\frac{\mathrm{d}^2a_1}{\mathrm{d}\mu^2} = 0$ and $\frac{\mathrm{d}^2a_2}{\mathrm{d}\mu^2} = -2\theta_2$, whence $\frac{\mathrm{d}^2\lambda_1}{\mathrm{d}\mu^2}(\mu^*,\beta^*) = 2 \theta_2$. 

Collecting these second-order derivatives, we find that the Hessian of $\lambda_1$ at $(\mu^*,\beta^*)$ is 
\begin{align}
    M_{\lambda_1} = 2\, \begin{bmatrix} \frac{\theta_1^2/\theta_2}{1-\theta_1^2/\theta_2} & \theta_1 \\ \theta_1 & \theta_2\end{bmatrix}\,.
\end{align}
We set $x \defn \frac{\theta_1^2}{\theta_2}$. Since $x \in (0,1)$ (this follows from Cauchy-Schwartz inequality), we find that $\mbox{tr}\,M_{\lambda_1} = 2 \frac{x}{1-x} + 2 \theta_2$, which is positive. Further, $\mbox{det}\,M_{\lambda_1} = 4\theta_2 \frac{x^2}{1-x}$ is also positive. Therefore, the Hessian $M_{\lambda_1}$ is positive definite and $(\mu^*,\beta^*)$ is a strict local minimum. 
\end{proof}

\subsection{Proof of Lemma~\ref{lemmanorootwithin}}
\label{prooflemmanorootwithin}
We fix some notations. We recall (see~\eqref{eqnpolynomialinbeta}) that $P_{\alpha}(\beta) = a \, \beta^3 + b \, \beta^2 + c \, \beta + d$, where $a = -1$, $b \equiv b(\alpha,\rho^*) = \rho^* (1-2\alpha)$, $c \equiv c(\alpha, \rho^*) = \rho^* (1-\alpha) (1+\alpha (2\rho^* - 1))$ and $d \equiv d(\alpha,\rho^*) = -{\rho^*}^2 (1-\alpha)^2$. In this proof, we reduce the complexity of studying the degree three polynomial $P_\alpha$ by an analysis of the roots of its degree two derivative $P^\prime_\alpha$. Our proof involves the maximal root $\beta_+(\alpha)$ of $P^\prime_\alpha$ that we formally define in the next result.
\begin{lemma}
\label{lemmamaximalrealroot}
The degree two polynomial $P_{\alpha}^\prime$ has two distinct real roots, and its maximal root $\beta_+(\alpha)$ is given by $\beta_+(\alpha) \equiv \beta_+(\alpha, \rho^*) = \frac{1}{3} \left[ b + \sqrt{b^2 + 3 c} \right]$. Furthermore, we have that $P_{\alpha}(\beta_+(\alpha)) = \frac{1}{27} \big[27 d + 9bc + 2 b^3 + 2\!\left(b^2 + 3c\right)^{\frac{3}{2}}\big]$.
\end{lemma}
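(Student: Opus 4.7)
\textbf{Proof proposal for Lemma~\ref{lemmamaximalrealroot}.} The plan is to treat the degree-two polynomial $P_\alpha'$ head-on: compute its discriminant, show positivity, and then evaluate $P_\alpha$ at the larger root using the standard trick of reducing $\beta_+^2$ and $\beta_+^3$ modulo the first-order optimality condition $P_\alpha'(\beta_+)=0$.

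First, differentiating gives $P_\alpha'(\beta) = -3\beta^2 + 2b\beta + c$, whose roots (when real) are $\beta_\pm = \frac{b \pm \sqrt{b^2+3c}}{3}$. To establish that both roots are real and distinct, I would show that $b^2+3c > 0$ for every $\alpha \geq 0$ and every $\rho^* \in (0,1)$. Expanding the definitions of $b = \rho^*(1-2\alpha)$ and $c = \rho^*(1-\alpha)(1+\alpha(2\rho^*-1))$ and collecting powers of $\alpha$, I expect the identity
\begin{align*}
b^2+3c \;=\; \rho^*\bigl[(3-2\rho^*)\alpha^2 - 2(3-\rho^*)\alpha + (\rho^*+3)\bigr].
\end{align*}
Viewed as a quadratic in $\alpha$, its leading coefficient $\rho^*(3-2\rho^*)$ is positive and its reduced discriminant equals $(3-\rho^*)^2 - (\rho^*+3)(3-2\rho^*) = 3\rho^*(\rho^*-1) < 0$, so the quadratic has no real root in $\alpha$ and is strictly positive throughout. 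This gives both the existence of two distinct real roots of $P_\alpha'$ and the claimed formula $\beta_+(\alpha) = \frac{1}{3}\bigl[b+\sqrt{b^2+3c}\bigr]$.

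Second, for the evaluation $P_\alpha(\beta_+)$, I would use $-3\beta_+^2 + 2b\beta_+ + c = 0$ to substitute $\beta_+^2 = \tfrac{1}{3}(2b\beta_+ + c)$ and then $\beta_+^3 = \tfrac{1}{9}\bigl((4b^2+3c)\beta_+ + 2bc\bigr)$. Plugging these into $P_\alpha(\beta_+) = -\beta_+^3 + b\beta_+^2 + c\beta_+ + d$ and gathering terms yields
\begin{align*}
P_\alpha(\beta_+) \;=\; \frac{2(b^2+3c)}{9}\,\beta_+ \;+\; \frac{bc}{9} \;+\; d .
\end{align*}
Finally, replacing $\beta_+$ by its closed-form expression $\frac{1}{3}(b+\sqrt{b^2+3c})$ and using $(b^2+3c)\sqrt{b^2+3c}=(b^2+3c)^{3/2}$, I obtain after clearing denominators
\begin{align*}
P_\alpha(\beta_+(\alpha)) \;=\; \frac{1}{27}\Bigl[27d + 9bc + 2b^3 + 2(b^2+3c)^{3/2}\Bigr],
\end{align*}
which is the claimed identity.

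The only mildly delicate step is the positivity verification $b^2+3c>0$; the rest is bookkeeping with polynomial identities. I expect no genuine obstacle here — the main care is just to keep track of signs when expanding $c$ and the discriminant, since $\rho^* \in (0,1)$ enters the algebra nontrivially and one must confirm the sign of each coefficient in the quadratic-in-$\alpha$ representation.
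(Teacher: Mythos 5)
Your proposal is correct and follows essentially the same route as the paper: compute the discriminant $b^2+3c$ of $P_\alpha'$, verify its strict positivity, take the larger root, and evaluate $P_\alpha$ there (the paper merely packages the positivity check as a completed square in $\alpha$ rather than a discriminant test, and leaves the remaining algebra as a "straightforward calculation," which you have filled in correctly via the standard reduction of $\beta_+^2$ and $\beta_+^3$ modulo $P_\alpha'(\beta_+)=0$).
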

\begin{proof}
A straightforward calculation yields that the discriminant of $P_\alpha^\prime$ is equal to $b^2 + 3c$, and we have $b^2 + 3c = \frac{\rho^*}{3-2 \rho^*} \big(((3-2\rho^*)\alpha + \rho^*-3)^2 + 3\rho^*(1-\rho^*)\big)$. We see that $b^2 + 3c > 0$, whence $P_\alpha^\prime$ has two distinct real roots. A straightforward calculation yields the claimed formula for $\beta_+(\alpha)$ and $P_\alpha(\beta_+(\alpha))$.
\end{proof}
We proceed by contradiction to prove Lemma~\ref{lemmanorootwithin}, that is, we assume that $P_\alpha$ has a real root within the interval $(0,{\rho^*})$. In Lemma~\ref{lemmadoubleroot}, we show that this implies the existence of another parameter $\widehat \alpha > 0$ such that  $\widehat \alpha \neq 1$, $\beta(\widehat{\alpha})$ is a root of $P_{\widehat{\alpha}}$ and $\beta(\widehat{\alpha}) = \beta_+(\widehat{\alpha}) \in (0, {\rho^*})$. On the other hand, we show in Lemma~\ref{lemmalocalmaximum} that for any $\alpha \gre 0$ such that $\beta_+(\alpha) \in (0,\rho^*)$, we must have $P_\alpha(\beta_+(\alpha)) < 0$. Applying Lemma~\ref{lemmalocalmaximum} to $\alpha = \widehat \alpha$ yields a contradiction and concludes the proof of Lemma~\ref{lemmanorootwithin}. It remains to state and prove the intermediate results Lemma~\ref{lemmadoubleroot} and Lemma~\ref{lemmalocalmaximum}.

\subsubsection{Intermediate Results for the Proof of Lemma~\ref{lemmanorootwithin}}

\begin{lemma}
\label{lemmadoubleroot}
Suppose that $\rho^* \neq \frac{1}{2}$ and that, for some $\overline{\alpha} \geq 0$, the polynomial $P_{\overline{\alpha}}$ has a real root within the interval $(0,{\rho^*})$. Then, there must exist some $\widehat{\alpha} > 0$ such that $\widehat{\alpha} \neq 1$, $\beta_+(\widehat{\alpha}) = \beta(\widehat{\alpha}) \in (0, {\rho^*})$ and $P_{\widehat{\alpha}}(\beta(\widehat{\alpha})) = 0$.
\end{lemma}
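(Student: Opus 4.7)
The plan is to run a two-stage intermediate value theorem argument on the function $g(\alpha) \defn P_\alpha(\beta_+(\alpha))$, i.e., the value of $P_\alpha$ at its unique local maximum (using Lemma~\ref{lemmamaximalrealroot}). The whole proof rests on an elementary observation about the boundary values of $P_\alpha$ on the interval $[0,\rho^*]$ and how they constrain the location of roots.

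First I would establish the two endpoint formulas by direct expansion: $P_\alpha(0) = -{\rho^*}^2(1-\alpha)^2$ and $P_\alpha(\rho^*) = -2{\rho^*}^3 \alpha^2$, which are both $\less 0$ with equalities only at $\alpha = 1$ and $\alpha = 0$ respectively. I would also check directly that $P_0$ factors as $(\beta-\rho^*)(-\beta^2+\rho^*)$ (roots $-\sqrt{\rho^*},\rho^*,\sqrt{\rho^*}$) and $P_1(\beta) = -\beta^2(\beta+\rho^*)$, so neither has a root in $(0,\rho^*)$; therefore $\overline{\alpha} \notin \{0,1\}$, and the boundary values $P_{\overline{\alpha}}(0), P_{\overline{\alpha}}(\rho^*)$ are \emph{strictly} negative. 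Since the only critical points of the cubic $P_\alpha$ are the local min $\beta_-$ and the local max $\beta_+$, the existence of a root of $P_\alpha$ in $(0,\rho^*)$ is equivalent to $\beta_+(\alpha) \in (0,\rho^*)$ together with $g(\alpha) > 0$.

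Next, I would produce a sign change for $g$. Using Lemma~\ref{lemmamaximalrealroot} at $\alpha = 0$ (with $b(0)=c(0)=\rho^*$), a direct calculation gives $\beta_+(0) = \tfrac{\rho^*}{3}\bigl(1+\sqrt{1+3/\rho^*}\bigr) > \rho^*$ for any $\rho^* \in (0,1)$. Since $\beta_+$ is continuous in $\alpha$ and $\beta_+(\overline{\alpha}) \in (0,\rho^*)$, the IVT yields some $\alpha_0 \in (0,\overline{\alpha})$ with $\beta_+(\alpha_0) = \rho^*$, whence $g(\alpha_0) = P_{\alpha_0}(\rho^*) = -2{\rho^*}^3 \alpha_0^2 < 0$. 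Combined with $g(\overline{\alpha}) > 0$, continuity of $g$ produces a zero; I would take
\[
\widehat{\alpha} \defn \inf\{\alpha \in [\alpha_0, \overline{\alpha}] \mid g(\alpha') > 0 \text{ for all } \alpha' \in [\alpha, \overline{\alpha}]\},
\]
so that $g(\widehat\alpha) = 0$ and $g > 0$ throughout $(\widehat\alpha,\overline\alpha]$. By the equivalence of the first step, $\beta_+(\alpha) \in (0,\rho^*)$ on that same interval, so by continuity $\beta_+(\widehat\alpha) \in [0,\rho^*]$. The value $\beta_+(\widehat\alpha) = \rho^*$ is ruled out because it would force $g(\widehat\alpha) = -2{\rho^*}^3 \widehat\alpha^2 < 0$; the value $\beta_+(\widehat\alpha) = 0$ would mean $P_{\widehat\alpha}(0) = g(\widehat\alpha) = 0$, forcing $\widehat\alpha = 1$, which contradicts $\widehat\alpha \less \overline\alpha \less 1$. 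Since $\widehat\alpha \gre \alpha_0 > 0$ and $\widehat\alpha \neq 1$, we obtain the required $\widehat\alpha > 0$ with $\widehat\alpha \neq 1$, $\beta_+(\widehat\alpha) \in (0,\rho^*)$, and $P_{\widehat\alpha}(\beta_+(\widehat\alpha)) = 0$.

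The main technical obstacle is the edge case $\overline\alpha > 1$, which could in principle arise when $\rho^* < 1/2$. For $\rho^* > 1/2$ I would verify this is vacuous by noting that $\alpha > 1$ forces $b = \rho^*(1-2\alpha) < 0$ and $c = \rho^*(1-\alpha)(1+\alpha(2\rho^*-1)) \less 0$, so $\beta_+(\alpha) = \tfrac{1}{3}(b + \sqrt{b^2+3c}) \less 0$ and hence $\overline\alpha \notin A$. For $\rho^* < 1/2$ and $\overline\alpha > 1$ I would run the same IVT scheme but deform $\alpha$ toward $+\infty$ instead of toward $0$: a scaling analysis $\beta = \alpha \gamma$ of $P_\alpha$ shows that all three roots are of order $\alpha$ or bounded away from $(0,\rho^*)$ for large $\alpha$, so $\beta_+(\alpha)$ eventually exits $(0,\rho^*)$ upward. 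The symmetric argument then yields a $\widehat\alpha$ on the right, and the only alternative "forbidden" zero of $c$ giving $\beta_+ = 0$ is $\alpha = 1/(1-2\rho^*)$, which is strictly distinct from $1$ (the case $\rho^* = 1/2$ being explicitly excluded), so this value is ruled out by the same monotone-component argument.
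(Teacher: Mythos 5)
Your proof is essentially correct and takes a genuinely different route from the paper's. The paper (after invoking Lemma~\ref{lemmarootwithin}) tracks continuous parametrizations $\beta_1(\alpha), \beta_2(\alpha)$ of two real roots of $P_\alpha$ and argues by contradiction, using conjugacy and multiplicity considerations, that the two roots must eventually merge as $\alpha$ is deformed towards $0$ or $+\infty$. You instead work directly with the scalar function $g(\alpha) \defn P_\alpha(\beta_+(\alpha))$ and run an intermediate value argument on it, exploiting the endpoint identities $P_\alpha(0) = -\rho^{*2}(1-\alpha)^2$ and, crucially, $P_\alpha(\rho^*) = -2\rho^{*3}\alpha^2$. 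The latter identity is what makes your argument go: whenever $\beta_+(\alpha)$ lands on $\rho^*$ you get $g(\alpha) < 0$ for free. Both of these endpoint formulas check out, as does $\beta_+(0) > \rho^*$ (it sits between the two largest roots $\rho^*$ and $\sqrt{\rho^*}$ of $P_0$). Your approach is more elementary than the paper's: it sidesteps the delicate bookkeeping about complex conjugate pairs and root multiplicities, and replaces a three-branch root-tracking argument with a single real-valued IVT. It also makes the structure (the exact roles of $\alpha=0$ and $\alpha=1$ as degenerate endpoints) much more transparent.

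A couple of exposition points worth tightening. First, the equivalence as you state it ("root in $(0,\rho^*)$ iff $\beta_+(\alpha)\in(0,\rho^*)$ and $g(\alpha)>0$") should read $g(\alpha)\geq 0$; the strict version would exclude exactly the double-root case that the lemma is trying to reach. Second, the step "By the equivalence of the first step, $\beta_+(\alpha)\in(0,\rho^*)$ on that same interval" is not a direct consequence of the equivalence as stated, since you only know $g>0$ on $(\widehat\alpha,\overline\alpha]$, not that $P_\alpha$ has a root there a priori. What you actually need (and it is an easy fix) is: on $(\widehat\alpha,\overline\alpha]$ one has $0 < \alpha < 1$, hence if $\beta_+(\alpha)$ ever hit $0$ or $\rho^*$ then $g(\alpha) = -\rho^{*2}(1-\alpha)^2 < 0$ or $g(\alpha) = -2\rho^{*3}\alpha^2 < 0$, contradicting $g>0$; so by continuity and connectedness starting from $\beta_+(\overline\alpha)\in(0,\rho^*)$, the value $\beta_+(\alpha)$ stays in $(0,\rho^*)$ throughout. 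With that repair the rest goes through, including the mirror argument for $\overline\alpha>1$ (which can only occur when $\rho^*<1/2$ and where the paper's Lemma~\ref{lemmaasymptoticsbetaplus} gives $\beta_+(\alpha)\to+\infty$, supplying the needed sign change on the right).
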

\begin{proof}
According to Lemma~\ref{lemmarootwithin}, we must have $\overline{\alpha} > 0$ and $\overline{\alpha} \neq 1$. Further, $P_{\overline{\alpha}}$ must either have one real root $\beta$ with multiplicity two within $(0,{\rho^*})$ and $\beta = \beta_+(\overline{\alpha})$ (in which case the result holds by setting $\widehat{\alpha} = \overline{\alpha}$), or, $P_{\overline{\alpha}}$ must have two distinct real roots $\beta_1, \beta_2 \in (0,{\rho^*})$ such that 
\begin{align}
\label{eqnsandwichinequality}
    \beta_1 < \beta_+(\overline{\alpha}) < \beta_2\,.
\end{align}
Thus, we assume that~\eqref{eqnsandwichinequality} holds. We introduce $\alpha \mapsto \beta_1(\alpha)$ and $\alpha \mapsto \beta_2(\alpha)$ continuous parameterizations over $[0, +\infty)$ of two of the roots of the polynomial $P_\alpha$, such that $\beta_1(\overline{\alpha}) = \beta_1$ and $\beta_2(\overline{\alpha}) = \beta_2$. Let us distinguish two cases.

\textbf{Case 1: $\overline{\alpha} < 1$.} We assume, by contradiction, that for any $\alpha \in [0,\overline{\alpha}]$, $\beta_1(\alpha) \neq \beta_2(\alpha)$. For any $\alpha \in [0,\overline{\alpha}]$, $P_\alpha(0) = -{\rho^*}^2 (1-\alpha)^2 \neq 0$. Therefore, as $\alpha$ decreases from $\overline{\alpha}$ to $0$, the roots $\beta_1(\alpha)$ and $\beta_2(\alpha)$ cannot become equal to $0$, nor can its third root which must be negative according to Lemma~\ref{lemmanegativeroot}. Either $\beta_1(\alpha)$ and $\beta_2(\alpha)$ remain both real and, thus, positive. Then, we must have $\beta_1(\alpha) < \beta_2(\alpha)$ for any $\alpha \in [0,\overline{\alpha}]$, and the third root of $P_\alpha$ remains negative, whence $\beta_2(\alpha)$ is the maximal root of $P_\alpha$ for any $\alpha \in [0,\overline{\alpha}]$. But, it holds that $\beta_2(\overline{\alpha}) < {\rho^*} < \sqrt{{\rho^*}}$, and $P_0$ has roots $\pm \sqrt{{\rho^*}}$ and ${\rho^*}$. It implies that $\beta_2(0) = \sqrt{{\rho^*}}$, so that, by continuity, $\beta_2(\alpha)$ must be equal to ${\rho^*}$ for some $\alpha \in (0,\overline{\alpha})$. But $P_\alpha({\rho^*}) = 0$ implies that $\alpha = 0$, which is a contradiction. Hence, $\beta_1(\alpha)$ or $\beta_2(\alpha)$ (say $\beta_1$) must become strictly complex for some $\alpha \in (0, \overline{\alpha})$. Let $\widehat{\alpha} \defn \sup \{\alpha < \overline{\alpha} \mid \beta_1(\alpha) \in \mathbb{C} - \mathbb{R}\}$. By continuity of $\beta_1(\alpha)$ and the fact that $\beta_1(\overline{\alpha}) \in \mathbb{R}$, we must have $\beta_1(\widehat{\alpha}) \in \mathbb{R}$. Since $\beta_1(\alpha)$ cannot cross the point $0$ nor the point ${\rho^*}$ for $\alpha \in (\widehat{\alpha}, \overline{\alpha})$, it holds that $\beta_1(\widehat{\alpha}) \in (0, {\rho^*})$. By continuity and conjugacy, we must have that $\beta_1(\widehat{\alpha}) = \beta_2(\widehat{\alpha})$, which is a contradiction.

\textbf{Case 2: $\overline{\alpha} > 1$.} We assume by contradiction that for any $\alpha \geq \overline{\alpha}$, $\beta_1(\alpha) \neq \beta_2(\alpha)$. Either $\beta_1(\alpha)$ and $\beta_2(\alpha)$ remain both real, and we must have $\beta_1(\alpha) < \beta_2(\alpha)$ for any $\alpha \geq \overline{\alpha}$. If ${\rho^*} < \frac{1}{2}$, then we have $\beta_+(\alpha) \to +\infty$ as $\alpha \to +\infty$ (see Lemma~\ref{lemmaasymptoticsbetaplus}). Combined with~\eqref{eqnsandwichinequality}, this implies that $\beta_2(\alpha) \to +\infty$ and $\beta_2(\alpha)$ must be equal to ${\rho^*}$ for some $\alpha > \overline{\alpha}$. But $P_\alpha(\rho^*)=0$ implies that $\alpha=0$, which is a contradiction. If ${\rho^*} > \frac{1}{2}$, then $\beta_+(\alpha) \to -\infty$ as $\alpha \to + \infty$, so that $\beta_1(\alpha) \to - \infty$ and $\beta_1(\alpha)$ must be equal to $0$ for some $\alpha > \overline{\alpha}$. But $P_\alpha(0) = -{\rho^*}^2 (1-\alpha)^2$, which cannot be equal to $0$ for $\alpha \gre \overline{\alpha} > 1$, and we have a contradiction. Hence, $\beta_1(\alpha)$ or $\beta_2(\alpha)$ (say $\beta_1$) must become strictly complex for some $\alpha \gre \overline{\alpha}$. Let $\widehat{\alpha} \defn \inf \{\alpha > \overline{\alpha} \mid \beta_1(\alpha) \in \mathbb{C} - \mathbb{R}\}$. By continuity of $\beta_1(\alpha)$ and the fact that $\beta_1(\overline{\alpha}) \in \mathbb{R}$, we must have $\beta_1(\widehat{\alpha}) \in \mathbb{R}$. Since $\beta_1(\alpha)$ cannot cross the point $0$ nor the point ${\rho^*}$ for $\alpha \in ( \overline{\alpha}, \widehat{\alpha})$, it holds that $\beta_1(\widehat{\alpha}) \in (0, {\rho^*})$. By continuity and conjugacy, we must have that $\beta_1(\widehat{\alpha}) = \beta_2(\widehat{\alpha})$, which is a contradiction.
\end{proof}

\begin{lemma}
\label{lemmalocalmaximum}
Suppose that $\rho^* \neq 1/2$. It holds that $P_\alpha(\beta_+(\alpha)) < 0$ for any $\alpha \gre 0$ such that $\beta_+(\alpha) \in (0,\rho^*)$.
\end{lemma}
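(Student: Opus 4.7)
The plan is to exploit the critical-point relation $P_\alpha'(\beta_+)=0$ twice---first to eliminate $c$ from $P_\alpha(\beta_+)$, then to eliminate $\rho^*$ from the resulting expression---so that the sign of $P_\alpha(\beta_+)$ reduces to a transparent algebraic inequality.

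First, from $P_\alpha'(\beta) = -3\beta^2 + 2b\beta + c$, the relation $P_\alpha'(\beta_+) = 0$ gives $c = 3\beta_+^2 - 2b\beta_+$. Substituting into $P_\alpha(\beta_+) = -\beta_+^3 + b\beta_+^2 + c\beta_+ + d$ eliminates the $c$-dependence and yields $P_\alpha(\beta_+) = 2\beta_+^3 - b\beta_+^2 + d$. Introducing the normalized variables $y \defn \beta_+/\rho^*$ and $s \defn 1-\alpha$ (and recalling $b = \rho^*(2s-1)$ and $d = -\rho^{*2}s^2$), this becomes
\[
P_\alpha(\beta_+) = \rho^{*2}\bigl[\,\rho^* y^2 (2y - 2s + 1) - s^2\,\bigr]\,.
\]
Rewriting the same identity $c = 3\beta_+^2 - 2b\beta_+$ in the $(y,s,\rho^*)$ variables, it is equivalent to $\rho^* = s^2 / D_1$ with $D_1 \defn 3y^2 - 2(2s-1)y - 2s(1-s)$ (valid since $s \neq 0$, because the hypothesis $\beta_+(\alpha) \in (0,\rho^*)$ excludes $\alpha = 1$; and $D_1 > 0$ since $\rho^* > 0$ and $s^2 > 0$).

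Next, substituting $\rho^* = s^2/D_1$ back into the expression for $P_\alpha(\beta_+)$ and simplifying produces the key factorization
\[
P_\alpha(\beta_+) = \frac{2\rho^{*2} s^2\,(y-s)(y^2 - y + s - 1)}{D_1}\,,
\]
using the polynomial identity $y^3 - (s+1)y^2 + (2s-1)y + s(1-s) = (y-s)(y^2 - y + s - 1)$. The sign analysis is then immediate: the prefactor $2\rho^{*2}s^2/D_1$ is strictly positive, and $y^2 - y + s - 1 = y(y-1) + (s-1) < 0$ for every $y \in (0,1)$ and $s \less 1$, since $y(y-1) < 0$ on the open interval. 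Consequently $P_\alpha(\beta_+) < 0$ if and only if $y > s$.

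Finally, I will establish $y > s$ in the region of interest. If $\alpha > 1$, then $s = 1-\alpha < 0 < y$ and the inequality is trivial. If $0 < \alpha < 1$, the inequality $y > s$ rewrites as $\beta_+ > \rho^*(1-\alpha)$, which, using $\beta_+ = (b + \sqrt{D})/3$ and squaring both positive sides, is equivalent to $D > \rho^{*2}(2-\alpha)^2$. This follows from the key algebraic identity
\[
D - \rho^{*2}(2-\alpha)^2 = 3\rho^*(1-\alpha)^2 (1-\rho^*)\,,
\]
whose right-hand side is strictly positive for $\rho^* \in (0,1)$ and $\alpha \neq 1$. The edge cases $\alpha = 0$ and $\alpha = 1$ do not arise within our region, since $\beta_+(0) > \rho^*$ and $\beta_+(1) = 0$. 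The main obstacle is spotting the double use of the critical-point condition and the resulting factorization through the root $y = s$; once these are in place, the sign analysis and the identity for $D - \rho^{*2}(2-\alpha)^2$ are mechanical verifications.
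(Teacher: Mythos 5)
Your proof is correct and takes a genuinely different and more elementary route than the paper. I verified the key steps: eliminating $c$ via $P_\alpha'(\beta_+)=0$ gives $P_\alpha(\beta_+)=2\beta_+^3-b\beta_+^2+d$; with $y=\beta_+/\rho^*$ and $s=1-\alpha$, the same relation reads $\rho^*D_1 = s^2$ with $D_1 = 3y^2-2(2s-1)y-2s(1-s)$; substituting yields $P_\alpha(\beta_+)=2\rho^{*3}(y-s)(y^2-y+s-1)$ (equivalently your fractional form), using the factorization $y^3-(s+1)y^2+(2s-1)y+s(1-s)=(y-s)(y^2-y+s-1)$, which is exact; and $D-\rho^{*2}(2-\alpha)^2 = 3\rho^*(1-\alpha)^2(1-\rho^*)$ checks out. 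The paper instead proceeds via a calculus-of-variations-style argument: it shows $\alpha=1$ is a strict local maximum of $\alpha\mapsto P_\alpha(\beta_+(\alpha))$ with value $0$, then argues this local maximum is unique by counting critical points (the critical-point equation, after squaring to remove the radical, has degree at most four, so there can be at most four critical points, and the known asymptotic $P_\alpha(\beta_+(\alpha))\to+\infty$ forces at most one local maximum), and finally combines this with the boundary behavior furnished by Lemma~\ref{lemmarangealpha} to conclude negativity on the interior of the admissible $\alpha$-range. Your argument sidesteps the auxiliary lemmas (\ref{lemmarangealpha} and \ref{lemmapbetaplusasymp}) and the uniqueness-of-maximum step entirely: the double use of the critical-point identity produces a factorization whose sign is transparent, so no continuity or degree-counting is needed. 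This is cleaner and more self-contained; the paper's approach, while heavier, has the side benefit of explicitly characterizing the admissible $\alpha$-interval, which it reuses elsewhere. One minor point worth making explicit in your write-up: the hypothesis $\beta_+(\alpha)\in(0,\rho^*)$ is needed precisely for the factor $y^2-y+s-1<0$ (it ensures $y\in(0,1)$ so that $y(y-1)<0$), which is where it enters your argument.
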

%
%
\begin{proof}
Using the expression of $P_\alpha(\beta_+(\alpha))$ given in Lemma~\ref{lemmamaximalrealroot}, a simple calculation yields that $\frac{\mathrm{d}}{\mathrm{d}\alpha}P_\alpha(\beta_+(\alpha))_{|\alpha=1} = 0$ and $\frac{\mathrm{d}^2}{\mathrm{d}\alpha^2}P_\alpha(\beta_+(\alpha))_{|\alpha=1} = -2 (1-{\rho^*}) {\rho^*}^2$. Therefore, $\alpha=1$ is a strict local maximum of $\alpha \mapsto P_\alpha(\beta_+(\alpha))$, and we find through further calculation that $P_1(\beta_+(1)) = 0$.

We make the assumption that the function $P_\alpha(\beta_+(\alpha))$ has a unique local maximum at $\alpha=1$. \textbf{Case 1: ${\rho^*} < \frac{1}{2}$.} From Lemma~\ref{lemmarangealpha}, we know that there exists $\alpha_1, \alpha_2$ such that $0 < \alpha_1 < 1 < \alpha_2$, $\beta_+(\alpha_1) = \beta_+(\alpha_2) = \rho^*$ and such that for any $\alpha \gre 0$, $\beta_+(\alpha) < \rho^*$ if and only if $\alpha \in (\alpha_1, \alpha_2)$. It is easily verified that $P_{\alpha}(\rho^*) < 0$ for any $\alpha \neq 1$, whence $P_{\alpha_i}(\beta_+(\alpha_i)) < 0 = P_{1}(\beta_+(1))$ for $i=1,2$. Combining the latter assumption and the fact that $1 \in (\alpha_1, \alpha_2)$, the maximum of $\alpha \mapsto P_\alpha(\beta_+(\beta))$ over $[\alpha_1,\alpha_2]$ is uniquely attained at $\alpha=1$. Hence, for any $\alpha \in (\alpha_1,\alpha_2)$ and $\alpha \neq 1$, we get that $P_\alpha(\beta_+(\alpha)) < P_1(\beta_+(1))$, i.e., $P_\alpha(\beta_+(\alpha)) < 0$. Equivalently, for any $\alpha \gre 0$ such that $\beta_+(\alpha) \in (0,\rho^*)$, we have $P_\alpha(\beta_+(\alpha)) < 0$ which is the claimed result. \textbf{Case 2: ${\rho^*} > \frac{1}{2}$.} From Lemma~\ref{lemmarangealpha}, we know that there exists $\overline \alpha \in (0,1)$ such that $\beta_+(\overline \alpha) = \rho^*$ and such that for any $\alpha \gre 0$, $0 < \beta_+(\alpha) < \rho^*$ if and only if $\alpha \in (\overline \alpha, 1)$. Since $\overline \alpha \neq 1$, we must have $P_{\overline{\alpha}}(\rho^*) < 0$, i.e., $P_{\overline{\alpha}}(\beta_+(\overline{\alpha})) < P_1(\beta_+(1))$. By uniqueness of the local maximum, we deduce that $P_\alpha(\beta_+(\alpha)) < 0$ for any $\alpha \in (\overline \alpha, 1)$. Equivalently, for any $\alpha \gre 0$ such that $\beta_+(\alpha) \in (0,\rho^*)$, we have $P_\alpha(\beta_+(\alpha)) < 0$ which is the claimed result.

It remains to show that the above assumption holds true, i.e., $\alpha \mapsto P_{\alpha}(\beta_+(\alpha))$ has a unique local maximum at $\alpha=1$. A simple calculation yields that $P_\alpha(\beta_+(\alpha)) = Q(\alpha) + \frac{2}{27} \!\left(b^2 + 3c\right)^{\frac{3}{2}}$ where $Q(\alpha) \defn \frac{1}{27} \left(27 d + 9bc + 2 b^3 \right)$. Then, $\frac{\mathrm{d}}{\mathrm{d}\alpha}P_\alpha(\beta_+(\alpha)) = Q^\prime(\alpha) + R(\alpha)$ where $R(\alpha) \defn \frac{1}{9} (2b b^\prime + 3c^\prime) \left(b^2 + 3c\right)^{\frac{1}{2}}$. By definition, any critical point of $\alpha \mapsto P_\alpha(\beta_+(\alpha))$ is a solution of the equation $Q^\prime(\alpha) = -R(\alpha)$. Squaring both sides of the latter equation, we get that any critical point must satisfy $S(\alpha) \defn {Q^\prime(\alpha)}^2 - R(\alpha)^2 = 0$. Through further calculation, we find that the function $S(\alpha)$ is a polynomial of degree less than four. Thus, the function $\alpha \mapsto P_\alpha(\beta_+(\alpha))$ has at most four critical points. Suppose by contradiction that there exist at least two local maxima. From Lemma~\ref{lemmapbetaplusasymp}, we know that $P_{\alpha}(\beta_+(\alpha)) \to +\infty$ as $|\alpha| \to +\infty$. The latter fact along with the existence of (at least) two local maxima implies that there must exist (at least) three local minima. Thus, there exist at least five critical points, which is a contradiction, and concludes the proof.
\end{proof}

\subsubsection{Additional Helper Results}

\begin{lemma}
\label{lemmarootwithin}
Suppose that $\rho^* \neq 1/2$ and let $\alpha \gre 0$. Suppose that $P_\alpha(\beta) = 0$ for some $\beta \in (0,\rho^*)$. Then, it holds that $\alpha > 0$ and $\alpha \neq 1$. Furthermore, it holds that, either $P_\alpha$ has two distinct real roots $\beta_1(\alpha), \beta_2(\alpha) \in (0,\rho^*)$ such that $\beta_1(\alpha) < \beta_+(\alpha) < \beta_2(\alpha)$, or,  $P_\alpha$ has one real root $\beta(\alpha)$ with multiplicity two within $(0,\rho^*)$ and $\beta(\alpha) = \beta_+(\alpha)$. 
\end{lemma}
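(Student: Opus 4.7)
The plan is to handle the boundary cases $\alpha \in \{0,1\}$ by direct factorization, then exploit the signs of $P_\alpha$ at the endpoints $0$ and $\rho^*$ to force two of the three roots of $P_\alpha$ (counted with multiplicity) to lie in $(0, \rho^*)$, and finally locate these roots relative to the critical point $\beta_+(\alpha)$.

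First, substituting $\alpha = 0$ factors $P_0(\beta) = -(\beta - \rho^*)(\beta^2 - \rho^*)$, whose real roots $\rho^*$ and $\pm\sqrt{\rho^*}$ all lie outside $(0, \rho^*)$ since $\sqrt{\rho^*} > \rho^*$ for $\rho^* \in (0,1)$. Substituting $\alpha = 1$ factors $P_1(\beta) = -\beta^2(\beta + \rho^*)$, whose roots $0$ and $-\rho^*$ also miss $(0, \rho^*)$. Hence the hypothesis forces $\alpha \neq 0$ and $\alpha \neq 1$, and in particular $\alpha > 0$ since $\alpha \gre 0$.

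Next I would compute the endpoint values $P_\alpha(0) = -{\rho^*}^2(1-\alpha)^2$ directly from the constant term, and $P_\alpha(\rho^*) = -2\alpha^2 {\rho^*}^3$ through a short expansion in which the cubic-in-$\alpha$ cancellations leave only the quadratic term; both are strictly negative since $\alpha \notin \{0,1\}$. Since $P_\alpha$ is cubic with leading coefficient $-1$, Lemma~\ref{lemmamaximalrealroot} furnishes the two distinct real critical points $\beta_- < \beta_+$, with $P_\alpha$ strictly increasing on $(\beta_-, \beta_+)$ and strictly decreasing outside.

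The final step is a case analysis on the real-root structure of $P_\alpha$. A triple real root $r$ would give $P_\alpha(0) = r^3$, which is negative only for $r < 0$, contradicting the existence of a root in $(0, \rho^*)$; a single simple real root in $(0, \rho^*)$ (with two complex conjugates) would force opposite signs for $P_\alpha(0)$ and $P_\alpha(\rho^*)$, again contradicting the endpoint computation. So $P_\alpha$ has three real roots counted with multiplicity, and we split on whether they are distinct. In the three-distinct-roots case, one enumerates the six possible configurations of $r_1 < r_2 < r_3$ relative to $[0, \rho^*]$ using the sign chart $P_\alpha > 0$ on $(-\infty, r_1) \cup (r_2, r_3)$; every configuration except $r_1 < 0 < r_2 < r_3 < \rho^*$ is excluded by $P_\alpha(0) < 0$ or $P_\alpha(\rho^*) < 0$, so exactly two roots $\beta_1 \defn r_2$ and $\beta_2 \defn r_3$ lie in $(0, \rho^*)$; applying the intermediate value theorem to $P_\alpha'$ (which is positive at $\beta_1$ and negative at $\beta_2$ because $P_\alpha$ crosses zero going upward at $\beta_1$ and downward at $\beta_2$) then yields $\beta_1 < \beta_+ < \beta_2$. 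In the double-root case, writing $P_\alpha(\beta) = -(\beta - s)^2(\beta - t)$ with $s \neq t$, the sign of $P_\alpha$ equals the sign of $-(\beta - t)$, so $P_\alpha(0), P_\alpha(\rho^*) < 0$ forces $t < 0 < s < \rho^*$; differentiating gives critical points $s$ and $(2t+s)/3$, and since $t < 0 < s$ the former is strictly larger, so $\beta_+ = s = \beta(\alpha)$ as claimed. The main bookkeeping obstacle is the enumeration in the three-distinct-roots setting, where several configurations must be ruled out using the endpoint sign chart.
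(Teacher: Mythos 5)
Your proof is correct and takes essentially the same approach as the paper: rule out $\alpha \in \{0,1\}$ by direct factorization, compute that $P_\alpha(0)$ and $P_\alpha(\rho^*)$ are both strictly negative, and then analyze the root structure of the cubic with negative leading coefficient to locate two roots in $(0,\rho^*)$ on either side of $\beta_+(\alpha)$ (or a double root at $\beta_+(\alpha)$). The paper organizes the final case analysis by splitting on whether the given root is a local maximum, while you classify the algebraic root structure (triple / one real plus complex pair / three distinct / double) exhaustively — a more verbose bookkeeping that reaches exactly the same dichotomy, and your explicit endpoint value $P_\alpha(\rho^*) = -2\alpha^2\rho^{*3}$ is a nice addition that the paper leaves as "easily verified."
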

\begin{proof}
Suppose that the polynomial $P_\alpha$ has a root $\widetilde{\beta} \in (0,\rho^*)$. We easily obtain that $P_{0}(\widetilde{\beta}) = - (\widetilde{\beta}-{\rho^*})(\widetilde{\beta}-\sqrt{{\rho^*}}) (\widetilde{\beta} + \sqrt{{\rho^*}})$, which cannot be equal to $0$ since $-\sqrt{\rho^*} < 0 < \widetilde{\beta} < \rho^* < \sqrt{\rho^*}$. On the other hand, we find that $P_{1}(\beta) = -\beta^3 - {\rho^*} \beta^2$ whose roots are exactly $0$ with multiplicity two and $-{\rho^*}$. Hence, $P_{1}$ has no root within $(0,{\rho^*})$. Therefore, we must have $\alpha > 0$ and $\alpha \neq 1$. We claim that, either $P_\alpha$ has a second root within $(0,{\rho^*})$, or, its root $\widetilde{\beta}$ has multiplicity two. It is easily verified that $P_\alpha(0) < 0$ and $P_\alpha({\rho^*}) < 0$. First, assume that $P_\alpha(\beta) \leq 0$ in a neighborhood of $\widetilde{\beta}$, i.e., the root $\widetilde{\beta}$ is a local maximum. In that case, $P_\alpha^\prime(\widetilde{\beta})=0$, which implies that $\widetilde{\beta}$ is a real root of $P_\alpha$ with multiplicity at least two. Since $P_\alpha(-\infty) = +\infty$ and $P_\alpha(0) < 0$, we must have $\widetilde{\beta} = \beta_+(\alpha)$. On the other hand, if $\widetilde{\beta}$ is not a local maximum, then $P_\alpha$ takes both negative and positive values within $(0,{\rho^*})$. Since $P_\alpha(0) < 0$ and $P_\alpha({\rho^*}) < 0$, we obtain that $P_\alpha$ must cross the $x$-axis at least twice, at $\beta_1(\alpha)$ and $\beta_2(\alpha)$, and that $P_{\alpha}(\beta) > 0$ for $\beta \in (\beta_1(\alpha), \beta_2(\alpha))$, which further implies that $\beta_1(\alpha) < \beta_+(\alpha) < \beta_2(\alpha)$.
\end{proof}
\begin{lemma}
\label{lemmanegativeroot}
Suppose that $\rho^* \neq 1/2$. If $\alpha \neq 1$, then $P_{\alpha}$ has a negative root. If $\alpha = 1$, then $P_\alpha(0)=0$.
\end{lemma}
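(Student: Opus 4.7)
The plan is to read off both claims directly from the explicit formula for $P_\alpha$ and a one-line sign/asymptotics argument.

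Recall from equation~\eqref{eqnpolynomialinbeta} that
\begin{align*}
P_\alpha(\beta) = -\beta^3 + \rho^*(1-2\alpha)\beta^2 + \rho^*(1-\alpha)(1+\alpha(2\rho^*-1))\beta - (\rho^*)^2(1-\alpha)^2.
\end{align*}
First I would evaluate at $\beta=0$: only the constant term survives, giving $P_\alpha(0) = -(\rho^*)^2(1-\alpha)^2$. This immediately settles the case $\alpha=1$, since the factor $(1-\alpha)^2$ vanishes.

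For the case $\alpha \neq 1$, the same evaluation yields $P_\alpha(0) = -(\rho^*)^2(1-\alpha)^2 < 0$, using that $\rho^* \in (0,1)$ (so $(\rho^*)^2 > 0$) and $(1-\alpha)^2 > 0$. Next I would look at the asymptotic behaviour of $P_\alpha$ as $\beta \to -\infty$: the leading term is $-\beta^3$, so $P_\alpha(\beta) \to +\infty$. Since $P_\alpha$ is a polynomial, hence continuous, the intermediate value theorem applied on an interval $(-M,0)$ with $M$ sufficiently large provides a root of $P_\alpha$ strictly less than $0$, which is the claimed negative root.

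There is no real obstacle here; the result is a direct consequence of the sign of the constant term and the sign of the leading coefficient. Note also that the hypothesis $\rho^* \neq \tfrac{1}{2}$ is not actually used in this particular argument, but it is already in force in the surrounding Section~\ref{prooflemmanorootwithin} and need not be revisited.
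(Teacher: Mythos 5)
Your proof is correct and follows essentially the same argument as the paper: compute $P_\alpha(0) = -(\rho^*)^2(1-\alpha)^2$ to settle the $\alpha=1$ case and obtain $P_\alpha(0)<0$ for $\alpha\neq 1$, then combine with $P_\alpha(\beta)\to+\infty$ as $\beta\to-\infty$ (odd degree, negative leading coefficient) and the intermediate value theorem to produce a negative root. You spell out the IVT step slightly more explicitly than the paper does, and you correctly observe that the hypothesis $\rho^*\neq\tfrac12$ plays no role here.
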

\begin{proof}
If $\alpha \neq 1$, then the zero-order coefficient of $P_\alpha$, which is equal to $-{\rho^*}^2 (1-\alpha)^2$, is negative. Since the degree of $P_\alpha$ is odd and its dominant coefficient is negative, it follows that $P_\alpha$ must have a negative root. If $\alpha = 1$, the zero-order coefficient is equal to $0$, i.e., $P_1(0) = 0$.
\end{proof}
\begin{lemma}
\label{lemmaasymptoticsbetaplus}
If $\rho^* < \frac{1}{2}$, then  $\beta_+(\alpha) \to +\infty$ as $|\alpha| \to +\infty$. If $\rho^* > \frac{1}{2}$, then $\beta_+(\alpha) \to \pm \infty$ as $\alpha \to \mp \infty$.
\end{lemma}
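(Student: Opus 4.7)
\subsubsection*{Proof Proposal for Lemma~\ref{lemmaasymptoticsbetaplus}}

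The plan is to substitute the explicit formulas for $b$ and $c$ into the closed-form expression $\beta_+(\alpha)=\tfrac{1}{3}\bigl[b+\sqrt{b^2+3c}\bigr]$ from Lemma~\ref{lemmamaximalrealroot}, extract the leading-order term in $\alpha$, and then analyze the sign of the limiting coefficient depending on whether $\rho^*$ lies below or above $1/2$. Since this is a direct asymptotic calculation, I do not expect a genuine obstacle; the only subtle point is tracking the sign of $\sqrt{b^2+3c}$, which behaves like $|\alpha|$ rather than $\alpha$, and this is precisely what splits the two regimes in the statement.

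The first step is to compute leading orders. From $b=\rho^*(1-2\alpha)$ we get $b\sim -2\rho^*\alpha$ and $b^2\sim 4{\rho^*}^2\alpha^2$. Expanding $c=\rho^*(1-\alpha)(1+(2\rho^*-1)\alpha)$ gives leading term $-\rho^*(2\rho^*-1)\alpha^2=\rho^*(1-2\rho^*)\alpha^2$. Summing,
\begin{equation*}
b^2+3c \;=\; \rho^*(3-2\rho^*)\,\alpha^2 \;+\; O(|\alpha|)\,,
\end{equation*}
and since $\rho^*\in(0,1)$ the coefficient $\rho^*(3-2\rho^*)$ is strictly positive. Hence $\sqrt{b^2+3c}\sim |\alpha|\sqrt{\rho^*(3-2\rho^*)}$ and
\begin{equation*}
\beta_+(\alpha)\;\sim\;\frac{\alpha}{3}\Bigl[-2\rho^* + \mathrm{sgn}(\alpha)\sqrt{\rho^*(3-2\rho^*)}\,\Bigr]\,.
\end{equation*}

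The second step is the sign analysis. The comparison $\sqrt{\rho^*(3-2\rho^*)}\gtrless 2\rho^*$ squared yields $\rho^*(3-2\rho^*)\gtrless 4{\rho^*}^2$, i.e.\ $3-2\rho^*\gtrless 4\rho^*$, i.e.\ $\rho^*\lessgtr \tfrac{1}{2}$. I will then split into two cases. If $\rho^*<\tfrac{1}{2}$, the bracket $-2\rho^*+\sqrt{\rho^*(3-2\rho^*)}$ is strictly positive, so sending $\alpha\to+\infty$ gives $\beta_+(\alpha)\to+\infty$; when $\alpha\to-\infty$ the sign flips on $\mathrm{sgn}(\alpha)$ cancel and the bracket becomes $-2\rho^*-\sqrt{\rho^*(3-2\rho^*)}<0$, which multiplied by $\alpha<0$ again gives $+\infty$. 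This establishes the first claim. If instead $\rho^*>\tfrac{1}{2}$, the bracket at $+\infty$ is strictly negative, giving $\beta_+(\alpha)\to-\infty$ as $\alpha\to+\infty$, while at $-\infty$ the same computation as above yields $\beta_+(\alpha)\to+\infty$, which is exactly the second claim.

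The hardest part, if any, is purely bookkeeping: one must be careful that the $|\alpha|$ factor coming from the square root produces different signs in the two tails, and that the two coefficients being compared, $2\rho^*$ and $\sqrt{\rho^*(3-2\rho^*)}$, cross at exactly $\rho^*=1/2$. Both are straightforward, and no new ideas beyond Lemma~\ref{lemmamaximalrealroot} are required.
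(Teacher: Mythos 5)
Your proposal is correct and follows essentially the same approach as the paper: expand $\beta_+(\alpha)=\tfrac13\bigl[b+\sqrt{b^2+3c}\bigr]$ to leading order in $\alpha$ (obtaining the same coefficient $\rho^*(3-2\rho^*)$ under the square root), and then use the fact that $\sqrt{\rho^*(3-2\rho^*)}\gtrless 2\rho^*$ iff $\rho^*\lessgtr\tfrac12$ to split the sign analysis. The only cosmetic difference is that the paper reads off the leading term directly from the explicit closed-form radicand, while you recompute it from the separate asymptotics of $b$ and $c$.
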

\begin{proof}
We have
\begin{align*}
\beta_+(\alpha) = \frac{1}{3} \left[ {\rho^*}(1-2\alpha) + \sqrt{\frac{{\rho^*}}{3-2{\rho^*}}} \sqrt{\left((3-2{\rho^*})\alpha + {\rho^*} - 3\right)^2 + 3{\rho^*}(1-{\rho^*})} \right]\,.
\end{align*}
Thus, it holds that $\beta_+(\alpha) = \frac{1}{3} \left(\sqrt{{\rho^*} (3-2{\rho^*})} \,|\alpha|  - 2{\rho^*}\, \alpha \right) + \mathcal{O}(1)$ as $|\alpha| \to +\infty$. Hence, the asymptotic limits of $\beta_+(\alpha)$ immediately follow from the fact that the inequality $\sqrt{{\rho^*} (3-2{\rho^*})} > 2 {\rho^*}$ is equivalent to ${\rho^*} < \frac{1}{2}$.
\end{proof}
\begin{lemma}
\label{lemmapbetaplusasymp}
Suppose that $\rho^* \neq 1/2$. It holds that $P_\alpha(\beta_+(\alpha)) \to +\infty$ as $|\alpha| \to +\infty$.
\end{lemma}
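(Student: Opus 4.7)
The plan is to compute the leading-order asymptotics of $P_\alpha(\beta_+(\alpha))$ as $|\alpha| \to \infty$ using the closed form from Lemma~\ref{lemmamaximalrealroot},
\begin{align*}
    P_\alpha(\beta_+(\alpha)) = \frac{1}{27}\!\left[27 d + 9 bc + 2 b^3 + 2(b^2 + 3c)^{3/2}\right]\,,
\end{align*}
where $b = \rho^*(1-2\alpha)$ is linear in $\alpha$ and $c,d$ are quadratic in $\alpha$. A direct collection of the leading cubic contributions yields $2b^3 + 9bc = 2(\rho^*)^2(10\rho^* - 9)\,\alpha^3 + O(\alpha^2)$, while $27 d = O(\alpha^2)$. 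Using the explicit formula for $b^2 + 3c$ given in Lemma~\ref{lemmamaximalrealroot}, its leading term is $\rho^*(3 - 2\rho^*)\alpha^2$, which is strictly positive since $\rho^* \in (0,1)$, so that $(b^2 + 3c)^{3/2} = [\rho^*(3-2\rho^*)]^{3/2}|\alpha|^3 + O(\alpha^2)$. Combining everything,
\begin{align*}
    27\,P_\alpha(\beta_+(\alpha)) = 2(\rho^*)^2(10\rho^* - 9)\,\alpha^3 + 2[\rho^*(3 - 2\rho^*)]^{3/2}\,|\alpha|^3 + O(\alpha^2)\,.
\end{align*}

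Splitting into the two sign regimes, the coefficient of $|\alpha|^3$ in the expansion is $2[\rho^*(3-2\rho^*)]^{3/2} + 2(\rho^*)^2(10\rho^* - 9)$ for $\alpha \to +\infty$ and $2[\rho^*(3-2\rho^*)]^{3/2} - 2(\rho^*)^2(10\rho^* - 9)$ for $\alpha \to -\infty$. To obtain divergence to $+\infty$ in both cases it suffices to prove the single inequality
\begin{align*}
    [\rho^*(3 - 2\rho^*)]^{3/2} > (\rho^*)^2\,|10\rho^* - 9|\,.
\end{align*}

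This algebraic inequality is the main obstacle of the proof. Since both sides are nonnegative, squaring and dividing by $(\rho^*)^3$ reduces it to $(3-2\rho^*)^3 > \rho^*(10\rho^* - 9)^2$. A direct polynomial expansion gives
\begin{align*}
    (3 - 2\rho^*)^3 - \rho^*(10\rho^* - 9)^2 = 27\!\left[1 - 5\rho^* + 8(\rho^*)^2 - 4(\rho^*)^3\right]\,,
\end{align*}
and the essential algebraic simplification is the factorization $1 - 5x + 8x^2 - 4x^3 = (1-x)(2x-1)^2$, which one verifies by checking that $x = 1$ is a simple root and $x = 1/2$ is a double root of the cubic. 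Since $\rho^* \in (0,1)$ (recall $\rho^* = 1 - \theta_1^2/\theta_2$ with $\theta_1, \theta_2 > 0$) and $\rho^* \neq 1/2$ by hypothesis, both $1-\rho^*$ and $(2\rho^* - 1)^2$ are strictly positive, so $(3-2\rho^*)^3 - \rho^*(10\rho^*-9)^2 > 0$. This establishes strict positivity of the $|\alpha|^3$ coefficient in both one-sided asymptotic expansions, and hence $P_\alpha(\beta_+(\alpha)) \to +\infty$ as $|\alpha| \to +\infty$.
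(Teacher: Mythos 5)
Your proof is correct. You compute the same leading $|\alpha|^3$ asymptotics of $P_\alpha(\beta_+(\alpha))$ as the paper does, but you then handle the two divergence directions in a single stroke by reducing both to the absolute-value inequality $\bigl[\rho^*(3-2\rho^*)\bigr]^{3/2} > (\rho^*)^2\,|10\rho^*-9|$, squaring, and discovering the factorization $(3-2\rho^*)^3 - \rho^*(10\rho^*-9)^2 = 27\,(1-\rho^*)(2\rho^*-1)^2$, which makes the strict positivity (given $\rho^*\in(0,1)$, $\rho^*\neq 1/2$) transparent. The paper instead treats $\alpha\to+\infty$ and $\alpha\to-\infty$ separately, substitutes $x = 1/\rho^*$, and verifies positivity of the auxiliary functions $f(x)=10-9x+(3x-2)^{3/2}$ and $g(x)=9x-10+(3x-2)^{3/2}$ via elementary calculus (locating the minimum of $f$ at $x=2$ with $f(2)=0$, and the monotonicity of $g$ with $g(1)=0$). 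Your route is a bit cleaner: it is purely algebraic, avoids the case split, and exposes the double root at $\rho^*=1/2$ — which is exactly the excluded value — directly in the factored form, so the role of the hypothesis $\rho^*\neq 1/2$ is visible at a glance. The one point worth stating explicitly is that squaring is valid because both sides of the inequality are nonnegative (the left side since $3-2\rho^*>0$ for $\rho^*\in(0,1)$, the right side by the absolute value), and squaring preserves strict inequality for nonnegative reals; you note nonnegativity, so this is fine.
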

\begin{proof}
We have 
\begin{align*}
    P_\alpha(\beta_+(\alpha)) = \frac{2}{27} \left[\,(\rho^*\, (3-2\rho^*))^{\frac{3}{2}} \,|\alpha|^3 + (10 {\rho^*}^3 - 9 {\rho^*}^2) \,\alpha^3  \right] + o(\alpha^3)
\end{align*}
as $|\alpha| \to + \infty$. As $\alpha \to +\infty$, we have $P_\alpha(\beta_+(\alpha)) \to + \infty$ if and only if $({\rho^*}\, (3-2{\rho^*}))^{\frac{3}{2}} + (10{\rho^*}^3 - 9 {\rho^*}^2) > 0$. The latter inequality is equivalent to $10 -\frac{9}{{\rho^*}} + \left(\frac{3}{{\rho^*}} - 2\right)^{\frac{3}{2}} > 0$. Set $f(x) = 10 - 9x + (3x - 2)^{\frac{3}{2}}$, for $x > 1$. Then $f(x) \to + \infty $ as $x \to +\infty$ and $f^\prime(x) = 9 \left(-1 + \frac{1}{2} (3x-2)^\frac{1}{2}\right) = 0$ if and only if $x=2$. Further, we have $f(2) = 0$. Therefore, the minimal value of $f(x)$ is $0$, and it is strictly attained at $x=2$. Provided that ${\rho^*} \neq \frac{1}{2}$, it follows that $P_\alpha(\beta_+(\alpha)) \to + \infty$ when $\alpha \to +\infty$. On the other hand, when $\alpha \to -\infty$, we have $P_\alpha(\beta_+(\alpha)) = \frac{2 |\alpha|^3}{27} \big[\,({\rho^*}\, (3-2{\rho^*}))^{\frac{3}{2}} + 9 {\rho^*}^2 - 10 {\rho^*}^3 \big] + o(\alpha^3)$. Further, $({\rho^*}\, (3-2{\rho^*}))^{\frac{3}{2}} + 9 {\rho^*}^2 - 10 {\rho^*}^3 > 0$ if and only $\frac{9}{{\rho^*}} -10 + \left(\frac{3}{{\rho^*}} - 2\right)^{\frac{3}{2}} > 0$. Setting $g(x) = 9x - 10 + (3x - 2)^{\frac{3}{2}}$, for $x \geq 1$, we have that $g^\prime(x) = 9 + \frac{9}{2}\sqrt{3x-2} > 0$, and $g(1) = 0$. Therefore, $P_\alpha(\beta_+(\alpha)) \to + \infty$ when $\alpha \to -\infty$
\end{proof}
\begin{lemma}
\label{lemmarangealpha}
Suppose that $\rho^* \neq \frac{1}{2}$. Then the following statements are true.
\begin{enumerate}[(a)]
	\item Suppose that ${\rho^*} < \frac{1}{2}$. Then, there exist $\alpha_1, \alpha_2 \in \mathbb{R}$ such that $0 < \alpha_1 < 1 < \alpha_2$ and, for any $\alpha \geq 0$, $\beta_+(\alpha) < \rho^*$ if and only if $\alpha \in (\alpha_1, \alpha_2)$. Further, $\beta_+(\alpha_1) = \beta_+(\alpha_2) = \rho^*$.
	\item Suppose that $\rho^* > \frac{1}{2}$. Then, there exists $\overline{\alpha} \in (0,1)$ such that for any $\alpha \geq 0$, $0 < \beta_+(\alpha) < \rho^*$ if and only if $\alpha \in (\overline{\alpha}, 1)$. Further, $\beta_+(\overline{\alpha}) = {\rho^*}$ and $\beta_+(1) = 0$.
\end{enumerate}
\end{lemma}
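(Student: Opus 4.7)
The plan is to reduce the lemma to a sign analysis of a single quadratic in $\alpha$, obtained by evaluating $P_\alpha^\prime$ at the point $\rho^*$. The starting observation is that, by Lemma~\ref{lemmamaximalrealroot}, $\beta_+(\alpha)$ is the larger of two distinct real roots of the concave quadratic $P_\alpha^\prime(\beta) = -3\beta^2 + 2b\beta + c$, whose midpoint equals $b/3 = \rho^*(1-2\alpha)/3$. For any $\alpha \geq 0$ one has $b \leq \rho^*$, hence $b/3 < \rho^*$, so $\rho^*$ lies strictly to the right of the midpoint. Consequently, $\beta_+(\alpha) < \rho^*$ iff $\rho^* \notin [\beta_-(\alpha),\beta_+(\alpha)]$ iff $P_\alpha^\prime(\rho^*) < 0$; and $\beta_+(\alpha) = \rho^*$ iff $P_\alpha^\prime(\rho^*) = 0$.

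A direct computation (expanding $c = \rho^*(1-\alpha)(1+\alpha(2\rho^*-1))$ and $b = \rho^*(1-2\alpha)$, then collecting powers of $\alpha$) gives
\[
P_\alpha^\prime(\rho^*) = \rho^* \cdot Q(\alpha), \qquad Q(\alpha) \defn (1-2\rho^*)\alpha^2 - 2(1+\rho^*)\alpha + (1-\rho^*).
\]
Since $\rho^* > 0$, the condition $\beta_+(\alpha) < \rho^*$ is equivalent to $Q(\alpha) < 0$. The discriminant of $Q$ equals $4\rho^*(5-\rho^*)$, which is positive for $\rho^* \in (0,1)$, so $Q$ has two distinct real roots; moreover $Q(0) = 1-\rho^* > 0$ and $Q(1) = -5\rho^* < 0$.

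For part (a), with $\rho^* < 1/2$, the polynomial $Q$ is convex (leading coefficient $1-2\rho^* > 0$), and the sign data $Q(0)>0$, $Q(1)<0$ force its two roots $\alpha_1 < \alpha_2$ to satisfy $0 < \alpha_1 < 1 < \alpha_2$ with $Q < 0$ precisely on $(\alpha_1,\alpha_2)$. This yields part (a), and $\beta_+(\alpha_i) = \rho^*$ follows from $Q(\alpha_i) = 0$ together with the midpoint argument above. For part (b), with $\rho^* > 1/2$, the polynomial $Q$ is concave with $Q \to -\infty$ as $|\alpha|\to\infty$ and $Q(0)>0$, so one root is negative and the other, $\overline{\alpha}$, lies in $(0,1)$; for $\alpha \geq 0$ we have $Q(\alpha) < 0$ iff $\alpha > \overline{\alpha}$.

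To finish part (b), it remains to pin down positivity of $\beta_+(\alpha)$. Since $\rho^* > 1/2$ and $\alpha \geq 0$, the factor $1 + \alpha(2\rho^*-1)$ is positive, so $\operatorname{sign}(c) = \operatorname{sign}(1-\alpha)$. For $\alpha < 1$, $c > 0$ gives $\sqrt{b^2+3c} > |b|$, hence $\beta_+(\alpha) > 0$; for $\alpha = 1$, $c = 0$ and $b = -\rho^* < 0$ give $\beta_+(1) = 0$; for $\alpha > 1$, both $b<0$ and $c<0$ give $\beta_+(\alpha) < 0$. Intersecting this with the characterization of $Q(\alpha) < 0$ above yields $0 < \beta_+(\alpha) < \rho^*$ iff $\alpha \in (\overline{\alpha}, 1)$, and the boundary values $\beta_+(\overline{\alpha}) = \rho^*$ and $\beta_+(1)=0$ follow. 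The proof is mostly algebraic bookkeeping; the one subtle point I would highlight is keeping track of which root of $Q$ corresponds to $\beta_+$ rather than $\beta_-$, which is uniformly handled by the midpoint inequality $b/3 < \rho^*$ valid throughout $\alpha \geq 0$.
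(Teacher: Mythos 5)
Your proof is correct. The conceptual route differs slightly from the paper's: the paper writes out $\beta_+(\alpha) < \rho^*$ using the explicit square-root formula from Lemma~\ref{lemmamaximalrealroot}, isolates the radical, verifies the right-hand side $2\rho^*(1+\alpha)$ is positive, and squares; you instead evaluate the concave quadratic $P'_\alpha$ at $\rho^*$ and use the midpoint bound $b/3 < \rho^*$ (valid throughout $\alpha \geq 0$) to conclude that $\beta_+(\alpha) < \rho^*$ is equivalent to $P'_\alpha(\rho^*) < 0$. Algebraically these are the same computation (expanding $P'_\alpha(\rho^*)$ reproduces exactly the quadratic the paper obtains by squaring, up to an overall sign), so the final quadratic $Q$, its discriminant $4\rho^*(5-\rho^*)$, and the sign-checks $Q(0)>0$, $Q(1)<0$ all coincide with the paper's. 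Your framing is arguably cleaner: the midpoint argument makes the equivalence immediate without having to justify the squaring step, and it also pins down automatically which of the two roots of $P'_\alpha$ equals $\rho^*$ at the boundary points $\alpha_1,\alpha_2$. The positivity analysis of $\beta_+$ in part~(b) via the sign of $c$ matches the paper's as well. No gaps.
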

\begin{proof}
Fix $\alpha \in \mathbb{R}$. Using the expression of $\beta_+(\alpha)$ given in Lemma~\ref{lemmamaximalrealroot}, we have that $\beta_+(\alpha) < \rho^*$ if and only if
\begin{align*}
\frac{1}{3}\left[{\rho^*} (1-2\alpha) + \sqrt{ \frac{{\rho^*}}{3-2{\rho^*}} \left[ ((3-2{\rho^*})\alpha + {\rho^*} - 3)^2 + 3{\rho^*}(1-{\rho^*}) \right] }\,\right] < {\rho^*}\,,
\end{align*}
which is equivalent to
\begin{align}
\label{EqnIntermediateInequality2}
\sqrt{ \frac{{\rho^*}}{3-2{\rho^*}} \left[ ((3-2{\rho^*})\alpha + {\rho^*} - 3)^2 + 3{\rho^*}(1-{\rho^*}) \right] } < 2 {\rho^*} (1+\alpha)\,.
\end{align}
Inequality~\eqref{EqnIntermediateInequality2} can only be true for $\alpha > -1$, which we assume from now on. Squaring both sides and after a few manipulations, we find that inequality~\eqref{EqnIntermediateInequality2} is equivalent to
\begin{align*}
Q(\alpha) \defn \alpha^2 (-1 + 2{\rho^*}) + 2 \alpha (1+{\rho^*}) + (-1+{\rho^*}) > 0\,.
\end{align*}
The polynomial $Q$ has two distinct real roots $\alpha_1, \alpha_2$, which are given by
\begin{align*}
\alpha_1 = \frac{{\rho^*} + 1 - \sqrt{{\rho^*} (5 -{\rho^*})}}{1-2{\rho^*}}\,, \qquad \alpha_2 = \frac{{\rho^*} + 1 + \sqrt{{\rho^*} (5 - {\rho^*})}}{1-2{\rho^*}}\,.
\end{align*}
If ${\rho^*} < \frac{1}{2}$, the dominant coefficient of $Q$ is negative, and $Q$ takes positive values between its two roots. Therefore, $\beta_+(\alpha) < {\rho^*}$ if and only if $\alpha \in (\alpha_1, \alpha_2) \cap (-1, +\infty)$. Further, it holds that $\alpha_1 > 0$, $\alpha_1 < 1$ and $\alpha_2 > 1$. Hence, $\beta_+(\alpha) < {\rho^*}$ if and only if $\alpha \in (\alpha_1, \alpha_2)$.\\ 
If ${\rho^*} > \frac{1}{2}$, the dominant coefficient of $Q$ is positive. Hence, $\beta_+(\alpha) < {\rho^*}$ if and only if $\alpha \in (\alpha_1, + \infty) \cup (-\infty, \alpha_2)$ and $\alpha > -1$. It holds that $\alpha_1 \in (0,1)$ and $\alpha_2 < -1$. Thus, setting $\overline{\alpha} = \alpha_1$, we have $\beta_+(\alpha) < {\rho^*}$ if and only if $\alpha > \overline{\alpha}$. On the other hand, a calculation yields that $\beta_+(\alpha) > 0$ if and only if ${\rho^*} (1-\alpha) (1+\alpha(2{\rho^*} - 1)) > 0$. Since ${\rho^*} > \frac{1}{2}$, it follows that ${\rho^*} (1+\alpha(2{\rho^*} - 1)) > 0$, and thus, $\alpha$ must be less than $1$. Hence, $\beta_+(\alpha) \in (0, {\rho^*})$ if and only if $\alpha \in (\overline{\alpha},1)$.
\end{proof}

\section*{Acknowledgments}
This work was partially supported by the National Science Foundation under grants IIS-1838179 and ECCS-2037304, Facebook Research, Adobe Research and Stanford SystemX Alliance. The authors thank Emmanuel Cand\`es, Edgar Dobriban, Michał Dereziński and Michael Mahoney for helpful discussions.

\end{document}